\documentclass[a4paper,10pt]{amsart}

\usepackage[english]{babel}
\usepackage[utf8]{inputenc}
\usepackage{hyperref}
\usepackage{amssymb}
\usepackage[enableskew]{youngtab}
\usepackage{young}

\newcommand{\bs}{\blacksquare}

\theoremstyle{plain}
\newtheorem{question}{Question}
\newtheorem{conjecture}[question]{Conjecture}
\newtheorem{theorem}[question]{Theorem}
\newtheorem{corollary}[question]{Corollary}
\newtheorem{proposition}[question]{Proposition}
\newtheorem{lemma}[question]{Lemma}
\newtheorem*{mainquestion}{Main Question}
\newtheorem*{stanleyconjecture}{Stanley's Conjecture}

\theoremstyle{remark}
\newtheorem{remark}[question]{Remark}

\begin{document}

\title{Some combinatorial properties of skew Jack symmetric functions}
\author{Paolo Bravi, Jacopo Gandini}

%\date{\today}

\maketitle

\begin{abstract}
Motivated by Stanley's conjecture on the multiplication of Jack symmetric functions, we prove a couple of identities showing that skew Jack symmetric functions are semi-invariant up to translation and rotation of a $\pi$ angle of the skew diagram. It follows that, in some special cases, the coefficients of the skew Jack symmetric functions with respect to the basis of the monomial symmetric functions are polynomials with nonnegative integer coefficients.  
\end{abstract}

\section{Introduction}

Jack symmetric functions $J_\lambda(x;\alpha)$ form a basis of the ring of symmetric functions in the infinite (countable) set of indeterminates $x=(x_1,x_2,\ldots)$ with coefficients in the fraction field $\mathbb Q(\alpha)$, as $\lambda=(\lambda_1,\lambda_2,\ldots)$ varies in the set of (integer) partitions. We recall here some notation and basic facts from \cite{M,S,KS}.

The symmetric functions $J_\lambda=J_\lambda(x;\alpha)$ are uniquely defined by the following properties:
\begin{enumerate}
\item they are pairwise orthogonal, that is, $\langle J_\lambda,J_\mu\rangle=0$ for all $\lambda\neq\mu$;
\item triangular with respect to the monomial symmetric functions $m_\lambda(x)$, that is, 
\[J_\lambda(x;\alpha)=\sum_{\mu\leq\lambda}v_{\lambda,\mu}(\alpha)\,m_\mu(x);\]
\item normalized as $v_{\lambda,(1^n)}(\alpha)=n!$ if $|\lambda|=n$.
\end{enumerate}
The partial ordering among partitions is the usual dominance order. The scalar product is uniquely defined by the following properties: the power sum symmetric functions $p_\lambda(x)$ are pairwise orthogonal and
\[\langle p_\lambda(x),p_\lambda(x)\rangle=z_\lambda \alpha^{\ell(\lambda)},\]
where $z_\lambda=(1^{k_1}2^{k_2}\cdots)(k_1!k_2!\cdots)$ if $k_i=k_i(\lambda)$ denotes the number of parts of $\lambda$ equal to $i$, and $\ell(\lambda)$ is the length of $\lambda$ (the number of nonzero parts of $\lambda$). Notice that specializing $\alpha=1$ one has the usual scalar product on symmetric functions, therefore the Jack symmetric functions specialize to scalar multiples of the Schur symmetric functions $s_\lambda(x)$.  

As conjectured by I.G.\ Macdonald  (\cite[VI (10.26?)]{M}, \cite[Conjecture 8.1]{S}) and proved by F.\ Knop and S.\ Sahi \cite{KS}, the above functions $v_{\lambda,\mu}(\alpha)$ are polynomials in $\alpha$ with nonnegative integer coefficients. Furthermore, Knop and Sahi have found an explicit integral combinatorial formula for $v_{\lambda,\mu}(\alpha)$ in terms of certain {\it admissible} fillings of weight $\mu$ of the Young diagram of $\lambda$.

As for skew Schur symmetric functions, skew Jack symmetric functions $J_{\lambda/\mu}=J_{\lambda/\mu}(x;\alpha)$ are defined by the following identities, for all partitions $\nu$:
\[\langle J_{\lambda/\mu}, J_\nu\rangle = \langle J_\lambda, J_\mu J_\nu\rangle.\]
Therefore,
\[J_{\lambda/\mu}=\sum_\nu\frac{\langle J_\lambda,J_\mu J_\nu\rangle}{\langle J_\nu,J_\nu\rangle}J_\nu,\]
the sum is clearly finite as the coefficient of $J_\nu$ can be nonzero only if $|\nu|=|\lambda|-|\mu|$. 

For a partition $\lambda$, set $u_\lambda = \prod_{i \geq 1} k_i!$, where $k_i$ denotes the number of parts of $\lambda$ which are equal to $i$ as above.

Let
\[J_{\lambda/\mu}(x;\alpha)=\sum_\nu v_{\lambda/\mu,\, \nu}(\alpha)\,m_\nu(x)\]
and let $\tilde v_{\lambda/\mu,\, \nu}(\alpha) = u_\nu^{-1} \ v_{\lambda/\mu,\, \nu}(\alpha)$. As far as we know the following was never considered.
 
\begin{mainquestion}	%\label{question}
Are the $\tilde v_{\lambda/\mu,\, \nu}(\alpha)$ polynomials with nonnegative integer coefficients?
\end{mainquestion}

By explicit computation with SageMath \cite{SAGE} we know the answer is affirmative for $|\lambda|\leq10$.

A complete affirmative answer to the above question and especially an explicit integral combinatorial interpretation of the functions $v_{\lambda/\mu,\,\nu}(\alpha)$ could be seen as a first step toward a possible proof of the following interesting and still open conjecture made by R.\ Stanley.

\begin{stanleyconjecture}{\cite[Conjecture 8.3]{S}}
The functions $\langle J_\lambda, J_\mu J_\nu\rangle$ are polynomials with nonnegative integer coefficients.
\end{stanleyconjecture}
 
It is already known that the functions $g^\lambda_{\mu, \nu}(\alpha) := \langle J_\lambda, J_\mu J_\nu\rangle$ are polynomials in $\alpha$ with integer coefficients, this follows from the fact that the $v_{\lambda,\mu}(\alpha)$ are polynomials with integer coefficients. We will refer to the $g^\lambda_{\mu, \nu}(\alpha)$ as the {\emph{Stanley $g$-polynomials}.

Let us briefly comment on Stanley's conjecture. For any couple of partitions $\mu$ and $\nu$ we clearly have
\[J_{\mu}\,J_\nu=\sum_\lambda \frac{\langle J_\lambda,J_\mu J_\nu\rangle}{\langle J_\lambda,J_\lambda\rangle}J_\lambda.\]
The squared norm of $J_\lambda$, denoted by $j_\lambda(\alpha)$, is known to be a polynomial with nonnegative integer coefficients, see Theorem \ref{norma} below.  Therefore, from Stanley's conjecture we would have that the nonvanishing of the Littlewood-Richardson coefficient $c^\lambda_{\mu,\nu}$ in
\[s_{\mu}\,s_\nu=\sum_\lambda c^\lambda_{\mu,\nu}s_\lambda\]
implies the nonvanishing of the coefficient $g^\lambda_{\mu,\nu}/j_\lambda$ in 
\[J_{\mu}\,J_\nu=\sum_\lambda \frac{g^\lambda_{\mu,\nu}}{j_\lambda}J_\lambda\]
for all $\alpha>0$. This would provide a quite complete information on the multiplication of spherical functions of certain symmetric spaces. Indeed, for certain values of $\alpha>0$, such as $\alpha=1/2$ or $\alpha=2$, Jack symmetric functions specialize to certain (restricted) spherical functions, such as the so-called zonal polynomials. For further details in this direction see \cite{GH,BG}.

Our main results, which are contained in Section \ref{sec:main}, are the formulas of Theorem \ref{translation} and Theorem \ref{rotation} which allow to affirmatively answer in some special cases to the main question above, see Corollary \ref{cor:translation} and Corollary \ref{cor:rotation}. Proposition \ref{prefix} also provides affirmative answer to the main question in another special case.

Notice that Theorem \ref{translation} and Theorem \ref{rotation} are derived from a combinatorial formula due to Stanley \cite[Theorem 6.3]{S} (see Theorem \ref{SThm6.3}) which readily generalizes to the two parameter Macdonald polynomials (see \cite[VI (7.13) and (8.3)]{M}). Therefore, our formulas generalize to the Macdonald polynomials, too.

In Section \ref{sec:factorizability} we provide some remarks directly on the Stanley $g$-polynomials which under some special hypotheses are conjectured to be product of linear factors.

In Section \ref{sec:lowest} we formulate a combinatorial conjecture on the lowest coefficient of the skew Jack symmetric functions which could be the first step toward a generalization of Knop and Sahi's combinatorial formula to the skew case.

\section{Remarks and partial answers to the main question}\label{sec:main}

\subsection{The squared norm}

Let us fix some more notation. If $\lambda$ is a partition, by $s\in\lambda$ we mean that $s$ is a box in the diagram of $\lambda$. For all $s\in\lambda$ we set
\[c_{\lambda,\,s}=c_{\lambda,\,s}(\alpha)=a_{\lambda,\,s} \, \alpha+\ell_{\lambda,\,s}+1,\quad c'_{\lambda,\,s}=c'_{\lambda,\,s}(\alpha)=(a_{\lambda,\,s}+1)\alpha+\ell_{\lambda,\,s},\]
where $a_{\lambda,\,s}$ is the arm of $s$ in $\lambda$ (the number of boxes on the same row of $s$, on the right of $s$) and $\ell_{\lambda,\,s}$ is the leg of $s$ in $\lambda$ (the number of boxes on the same column of $s$, below $s$).
We will use also the following notation
\[c_{\lambda}=c_\lambda(\alpha)=\prod_{s\in\lambda}c_{\lambda,\,s}(\alpha),\quad c'_{\lambda}=c'_\lambda(\alpha)=\prod_{s\in\lambda}c'_{\lambda,\,s}(\alpha).\]

Notice that $c'_\lambda(\alpha)=\alpha^{|\lambda|}\, c_{\lambda'}(\alpha^{-1})$, where $\lambda'$ is the transposition of $\lambda$.

Let us denote by $j_\lambda=j_\lambda(\alpha)$ the squared norm of $J_\lambda(x;\alpha)$ and notice that $j_\lambda(\alpha)=v_{\lambda/\lambda,\,\emptyset}(\alpha)$.

\begin{theorem}{\cite[Theorem 5.8]{S}} 	\label{norma}
For all partitions $\lambda$, 
\[j_\lambda = c_{\lambda} \, c'_{\lambda}.\]
\end{theorem}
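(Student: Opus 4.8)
\emph{Proof sketch.}
The plan is to pass from the integral form $J_\lambda$ to the monic form $P_\lambda=P_\lambda^{(\alpha)}$ (the unique symmetric function orthogonal to every $P_\mu$ with $\mu\neq\lambda$, triangular with respect to the monomials, with coefficient $1$ on $m_\lambda$) and to the basis $\{Q_\mu\}$ dual to $\{P_\mu\}$ for the scalar product. The claim reduces to the two normalization identities
\[
J_\lambda=c_\lambda(\alpha)\,P_\lambda,\qquad J_\lambda=c'_\lambda(\alpha)\,Q_\lambda,
\]
because then $j_\lambda=\langle J_\lambda,J_\lambda\rangle=c_\lambda(\alpha)c'_\lambda(\alpha)\langle P_\lambda,Q_\lambda\rangle=c_\lambda(\alpha)c'_\lambda(\alpha)$. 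In view of $P_\lambda=m_\lambda+\sum_{\mu<\lambda}(\cdots)m_\mu$ and the normalization $v_{\lambda,(1^n)}(\alpha)=n!$, the first identity says exactly that $[m_{(1^n)}]P_\lambda=n!/c_\lambda(\alpha)$ (with $n=|\lambda|$), and the second says that $\langle P_\lambda,P_\lambda\rangle=c'_\lambda(\alpha)/c_\lambda(\alpha)$.

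I would prove both by induction on $n$, the case $\lambda=\emptyset$ being trivial. The engine is the Pieri rule for Jack symmetric functions, $p_1 P_\mu=\sum_{\lambda}\psi'_{\lambda/\mu}(\alpha)P_\lambda$ (sum over $\lambda\supset\mu$ with $|\lambda/\mu|=1$), together with the adjoint $p_1^{\perp}=\alpha\,\partial/\partial p_1$ of multiplication by $p_1=J_{(1)}$, whose expansion on the $P$-basis (equivalently, the skew function $P_{\lambda/(1)}$) is likewise by explicit coefficients; in both rules the coefficients are products of factors $a\alpha+\ell+\mathrm{const}$ taken over the boxes in the column, resp.\ the row, of the single box $\lambda/\mu$. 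For the first identity one extracts the coefficient of $m_{(1^n)}$, using the elementary relation $[m_{(1^n)}](p_1 f)=n\,[m_{(1^{n-1})}]f$ for $f$ homogeneous of degree $n-1$; together with $p_1^{\perp}$ this turns the inductive step into an identity expressing $n!/c_\lambda$ as a weighted sum of the $(n-1)!/c_\mu$, which is verified box-by-box. For the second identity one pairs the Pieri rule with $Q_\lambda$: by adjointness, $\langle P_\lambda,P_\lambda\rangle/\langle P_\mu,P_\mu\rangle$ (for $\mu=\lambda-\square$) equals the ratio of the relevant $p_1$- and $p_1^{\perp}$-coefficients, which a direct computation identifies with $(c'_\lambda c_\mu)/(c_\lambda c'_\mu)$, closing the induction.

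The only non-formal input is the Pieri rule itself and the explicit shape of its coefficients; I would take it as known, since it is established in \cite{M} independently of the norm formula (alternatively, one could use the principal specialization $J_\lambda(1^N;\alpha)=\prod_{(i,j)\in\lambda}\bigl((j-1)\alpha+N-i+1\bigr)$ together with the Cauchy identity $\sum_\lambda j_\lambda^{-1}J_\lambda(x)J_\lambda(y)=\prod_{i,j}(1-x_iy_j)^{-1/\alpha}$). I expect the main obstacle to lie in the box-by-box matching of the Pieri coefficients with the hook quantities $c_{\lambda,s},c'_{\lambda,s}$; everything else is routine. As a consistency check: the involution $\omega_\alpha$ with $\omega_\alpha(p_r)=(-1)^{r-1}\alpha p_r$ satisfies $\langle\omega_\alpha f,\omega_\alpha g\rangle_{1/\alpha}=\langle f,g\rangle_\alpha$ and sends $J_\lambda(x;\alpha)$ to $\alpha^{|\lambda|}J_{\lambda'}(x;1/\alpha)$, so $j_\lambda(\alpha)=\alpha^{2|\lambda|}j_{\lambda'}(1/\alpha)$, a symmetry also satisfied by $c_\lambda(\alpha)c'_\lambda(\alpha)$ because $c'_\lambda(\alpha)=\alpha^{|\lambda|}c_{\lambda'}(\alpha^{-1})$.
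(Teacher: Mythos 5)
The paper does not prove this statement: Theorem \ref{norma} is imported verbatim from Stanley \cite[Theorem 5.8]{S} (and is in any case classical, cf.\ \cite[VI (10.22), (10.25)]{M}), so there is no internal proof to compare yours against. Judged on its own terms, your outline is correct and is essentially the standard derivation. The reduction is sound: writing $J_\lambda=v_{\lambda,\lambda}P_\lambda$ with $P_\lambda$ monic and $Q_\lambda=\langle P_\lambda,P_\lambda\rangle^{-1}P_\lambda$ the dual basis, the two normalization identities $v_{\lambda,\lambda}=c_\lambda$ and $\langle P_\lambda,P_\lambda\rangle=c'_\lambda/c_\lambda$ immediately give $j_\lambda=c_\lambda c'_\lambda\langle P_\lambda,Q_\lambda\rangle=c_\lambda c'_\lambda$; the restatement of the first identity as $[m_{(1^n)}]P_\lambda=n!/c_\lambda$ and the elementary relation $[m_{(1^n)}](p_1f)=n\,[m_{(1^{n-1})}]f$ are both correct, as is the adjointness $p_1^{\perp}=\alpha\,\partial/\partial p_1$ and the closing $\omega_\alpha$-consistency check. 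The one point to watch is logical dependence: in Macdonald's development the explicit Pieri coefficients $\psi'_{\lambda/\mu}$ and the norm $b_\lambda^{-1}=\langle P_\lambda,P_\lambda\rangle$ are established together, so if you ``take the Pieri rule as known'' in that form you risk assuming a statement equivalent to the one you are proving. You flag this yourself, and the alternative you name --- the principal specialization $J_\lambda(1^N;\alpha)=\prod_{(i,j)\in\lambda}((j-1)\alpha+N-i+1)$ combined with the Cauchy identity --- is indeed a clean non-circular route (and closer in spirit to Stanley's own argument, which leans on the duality involution and the extreme coefficients $v_{\lambda,\lambda}$, $v_{\lambda,(1^n)}$). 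As written this remains a sketch: the ``box-by-box matching'' of Pieri coefficients with the hook products $c_{\lambda,s}$, $c'_{\lambda,s}$, which you correctly identify as the only real work, is deferred rather than carried out.
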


\subsection{Stanley's combinatorial formula}

A (rational) combinatorial formula for the function $v_{\lambda/\mu,\,\nu}(\alpha)$ has already been found by Stanley. 

Recall that a skew partition $\lambda/\mu$ is called a \textit{horizontal strip} if the corresponding skew diagram contains at most one box in every column.

A tableau $T$ of shape $\lambda/\mu$ is called {\textit{standard} if it is nondecreasing along rows and strictly increasing along columns. This is equivalent to require that by deleting the boxes labelled with $j>i$ one gets the diagram of a skew partition $\lambda^{(i)}/\mu$ such that, for all $i>1$, the skew partition $\lambda^{(i)}/\lambda^{(i-1)}$ is a horizontal strip. 

For all standard tableaux $T$ of shape $\lambda/\mu$ set 
\[w_T=w_T(\alpha)=\frac{j_\mu\prod_i\prod_{s\in\lambda^{(i)}}B_{\lambda^{(i)}/\lambda^{(i-1)},\,s}}{\prod_i\prod_{s\in\lambda^{(i-1)}}C_{\lambda^{(i)}/\lambda^{(i-1)},\,s}}\]
where
\[B_{\lambda^{(i)}/\lambda^{(i-1)},\,s}=\left\{\begin{array}{ll}
c_{\lambda^{(i)},\,s} & \text{if $\lambda^{(i)}/\lambda^{(i-1)}$ has a box in the same column as $s$}\\
c'_{\lambda^{(i)},\,s} & \text{otherwise}
\end{array}\right.\]
\[C_{\lambda^{(i)}/\lambda^{(i-1)},\,s}=\left\{\begin{array}{ll}
c_{\lambda^{(i-1)},\,s} & \text{if $\lambda^{(i)}/\lambda^{(i-1)}$ has a box in the same column as $s$}\\
c'_{\lambda^{(i-1)},\,s} & \text{otherwise}
\end{array}\right.\]

Notice that the above product is finite since
\[\frac{\prod_{s\in\lambda^{(i)}}B_{\lambda^{(i)}/\lambda^{(i-1)},\,s}}{\prod_{s\in\lambda^{(i-1)}}C_{\lambda^{(i)}/\lambda^{(i-1)},\,s}}\]
is definitely equal to $1$ for $i>\!\!>0$.

When $\lambda/\mu$ is a horizontal strip, we will also use the compact notations
$$
	B_{\lambda/\mu} = \prod_{s\in\lambda} B_{\lambda/\mu,\,s}, \qquad 	C_{\lambda/\mu} = \prod_{s\in\mu} C_{\lambda/\mu,\,s}.
$$
In particular, if $\lambda/\mu$ is a horizontal strip and $T$ is the skew tableau of shape $\lambda/\mu$ obtained by filling all the boxes with the same label, then
$$
	w_T = \frac{j_\mu B_{\lambda/\mu}}{C_{\lambda/\mu}}.
$$

Denoting as usual by $x^T$ the product of the $x_i^{k_i}$'s where $k_i$ is the number of labels of $T$ equal to $i$, one has

\begin{theorem}{\cite[Theorem 6.3]{S}}\label{SThm6.3}
For all skew partitions $\lambda/\mu$,
\[J_{\lambda/\mu}(x;\alpha)=\sum_{\substack{T\text{ standard}\\ \text{of shape }\lambda/\mu}} w_T(\alpha) x^T.\]
\end{theorem}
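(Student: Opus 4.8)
The plan is to derive the formula by iterating its one-variable specialization, in the spirit of the classical derivation of the tableau formula for skew Schur functions from the Pieri rule. I would first treat the case of a single indeterminate $x_1$. Since $m_\rho(x_1)=0$ unless $\rho$ has one part, the expansion $J_{\lambda/\mu}=\sum_\rho\langle J_\lambda,J_\mu J_\rho\rangle\,j_\rho^{-1}J_\rho$ collapses, for $r=|\lambda|-|\mu|$, to
\[
J_{\lambda/\mu}(x_1;\alpha)=\frac{\langle J_\lambda,J_\mu J_{(r)}\rangle}{j_{(r)}}\,J_{(r)}(x_1;\alpha)=\frac{\langle J_\lambda,J_\mu J_{(r)}\rangle}{c'_{(r)}}\,x_1^{\,r},
\]
using $J_{(r)}(x_1;\alpha)=c_{(r)}\,x_1^{\,r}$ and $j_{(r)}=c_{(r)}\,c'_{(r)}$ (Theorem \ref{norma}). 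The numerator equals $j_\lambda$ times the Jack--Pieri coefficient, so the Pieri rule for Jack polynomials (\cite[\S6]{S}, \cite[VI]{M}) shows that $J_{\lambda/\mu}(x_1;\alpha)=0$ unless $\lambda/\mu$ is a horizontal strip, and that in that case, after re-expressing arms and legs through the quantities $c_{\cdot,s}$ and $c'_{\cdot,s}$, one obtains $J_{\lambda/\mu}(x_1;\alpha)=\frac{j_\mu B_{\lambda/\mu}}{C_{\lambda/\mu}}\,x_1^{\,r}=w_T(\alpha)\,x^T$, where $T$ is the one-label tableau of shape $\lambda/\mu$. This step carries essentially all of the $\alpha$-dependence and is the main obstacle: one must know the Jack--Pieri coefficients explicitly and check that they reorganize exactly into the factors $B_{\lambda/\mu,\,s}$ and $C_{\lambda/\mu,\,s}$.

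Next I would establish the comultiplication identity: for two disjoint alphabets $x$ and $y$,
\[
J_{\lambda/\mu}(x,y;\alpha)=\sum_{\mu\subseteq\nu\subseteq\lambda}\frac{1}{j_\nu}\,J_{\nu/\mu}(x;\alpha)\,J_{\lambda/\nu}(y;\alpha).
\]
This is formal: writing $J_{\lambda/\mu}(y)=\langle J_\lambda(x),\,J_\mu(x)\,\Pi(x,y)\rangle_x$ with $\Pi(x,y)=\sum_\nu j_\nu^{-1}J_\nu(x)J_\nu(y)$ the Cauchy kernel, using $\Pi(x,y\cup y')=\Pi(x,y)\,\Pi(x,y')$ and the reproducing property of $\Pi$ once more, one gets the displayed equality (cf.\ \cite[VI]{M}). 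Iterating it over $n$ one-variable alphabets yields
\[
J_{\lambda/\mu}(x_1,\dots,x_n;\alpha)=\sum_{\mu=\nu^{(0)}\subseteq\nu^{(1)}\subseteq\cdots\subseteq\nu^{(n)}=\lambda}\Bigl(\prod_{i=1}^{n-1}\frac{1}{j_{\nu^{(i)}}}\Bigr)\prod_{i=1}^{n}J_{\nu^{(i)}/\nu^{(i-1)}}(x_i;\alpha).
\]

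Finally I would substitute the one-variable formula into each factor. A chain contributes $0$ unless every $\nu^{(i)}/\nu^{(i-1)}$ is a horizontal strip, that is, unless the chain is the one attached to a standard tableau $T$ of shape $\lambda/\mu$ with entries in $\{1,\dots,n\}$; for such a chain the product of the one-variable factors equals $\bigl(\prod_{i=1}^{n}j_{\nu^{(i-1)}}\bigr)\cdot\prod_{i=1}^{n}\bigl(B_{\nu^{(i)}/\nu^{(i-1)}}/C_{\nu^{(i)}/\nu^{(i-1)}}\bigr)\cdot x^T$, and multiplying by $\prod_{i=1}^{n-1}j_{\nu^{(i)}}^{-1}$ the squared-norm factors telescope down to $j_\mu$, leaving precisely $w_T(\alpha)\,x^T$. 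Hence $J_{\lambda/\mu}(x_1,\dots,x_n;\alpha)=\sum_T w_T(\alpha)\,x^T$, the sum over standard tableaux of shape $\lambda/\mu$ with entries $\le n$; since $J_{\lambda/\mu}$ is homogeneous of degree $|\lambda/\mu|$ it is enough to take $n=|\lambda/\mu|$, and passing to infinitely many variables gives the theorem.
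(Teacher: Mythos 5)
The paper gives no proof of this statement at all---it is quoted verbatim from Stanley \cite[Theorem 6.3]{S}---and your outline is precisely Stanley's own derivation: the one-variable specialization via the Pieri rule, the coproduct identity (his Proposition 4.2, restated here as Proposition \ref{split}), iteration over one-variable alphabets, and the telescoping of the norms $j_{\nu^{(i)}}$ down to the single factor $j_\mu$ in $w_T$. The only non-formal ingredient is the explicit Jack--Pieri coefficient $\langle J_\lambda, J_\mu J_{(r)}\rangle$ (Stanley's Theorem 6.1), which you correctly single out as the crux and defer to the literature; granting that input, your argument is complete and correct.
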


\subsection{The leading coefficient}

Let us first recall the formula for the leading coefficient in 
\[J_{\lambda}(x;\alpha)=\sum_\mu v_{\lambda,\, \mu}(\alpha)\,m_\mu(x).\]

\begin{theorem}{\cite[Theorem 5.6]{S}}
For all partitions $\lambda$, 
\[v_{\lambda,\lambda} = c_{\lambda}.\]
\end{theorem}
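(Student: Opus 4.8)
The plan is to specialize Stanley's combinatorial formula (Theorem~\ref{SThm6.3}) to the case $\mu=\emptyset$, where $J_{\lambda/\emptyset}=J_\lambda$, and then read off the coefficient of the monomial $x^\lambda$. Since $J_\lambda=\sum_\mu v_{\lambda,\mu}m_\mu$ is symmetric and $\lambda$ is a partition, $v_{\lambda,\lambda}$ is exactly the coefficient of $x^\lambda$ in $J_\lambda$, so by Theorem~\ref{SThm6.3} one gets $v_{\lambda,\lambda}=\sum_T w_T(\alpha)$, the sum running over the standard tableaux $T$ of shape $\lambda$ with $x^T=x^\lambda$.

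First I would show that there is exactly one such tableau, namely the tableau $T_0$ in which every box of row $i$ carries the label $i$. Indeed, strict increase along columns forces the label of any box of row $r$ to be at least $r$, so the boxes of $T$ labelled $\le i$ all lie in the first $i$ rows; since $x^T=x^\lambda$ there are $\lambda_1+\dots+\lambda_i$ of them, which is precisely the number of boxes of the first $i$ rows, hence every box of the first $i$ rows is labelled $\le i$, and in particular every box of row $i$ is labelled $\le i$, thus (again by column strictness) exactly $i$. For $T_0$ the intermediate shapes are $\lambda^{(i)}=(\lambda_1,\dots,\lambda_i)$ and each skew shape $\lambda^{(i)}/\lambda^{(i-1)}$ is the single $i$-th row, a horizontal strip, so $T_0$ is standard; moreover $j_\emptyset=1$, so $w_{T_0}=\prod_i\big(\prod_{s\in\lambda^{(i)}}B_{\lambda^{(i)}/\lambda^{(i-1)},\,s}\big)\big/\big(\prod_{s\in\lambda^{(i-1)}}C_{\lambda^{(i)}/\lambda^{(i-1)},\,s}\big)$.

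The heart of the matter is a telescoping evaluation of $w_{T_0}$ carried out box by box. Fix $t=(r,c)\in\lambda$ and let $r'=\lambda'_c$ be the length of column $c$. Adding the $i$-th row affects the hook data of $t$ only when $t\in\lambda^{(i-1)}$, i.e. $i>r$, and the new row meets column $c$ exactly when $\lambda_i\ge c$, i.e. $i\le r'$; in that case the leg of $t$ grows by one and the arm is unchanged, while for $i>r'$ neither arm nor leg changes. Since, for a given box, $B$ and $C$ make the same choice between $c$ and $c'$, the contribution of $t$ to $w_{T_0}$ is $c_{\lambda^{(r)},\,t}$ (from step $i=r$, where $t$ lies in the newly added row, hence is absent from the denominator) times $\prod_{i=r+1}^{r'}c_{\lambda^{(i)},\,t}/c_{\lambda^{(i-1)},\,t}$ (the steps $r<i\le r'$) times $\prod_{i>r'}c'_{\lambda^{(i)},\,t}/c'_{\lambda^{(i-1)},\,t}$; the last product equals $1$ because arm and leg of $t$ are eventually constant, and the middle product telescopes, so the total contribution of $t$ is $c_{\lambda^{(r')},\,t}$. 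Finally, in $\lambda^{(r')}=(\lambda_1,\dots,\lambda_{r'})$ the box $t$ has arm $\lambda_r-c$ and leg $r'-r=\lambda'_c-r$, the same as in $\lambda$, so $c_{\lambda^{(r')},\,t}=c_{\lambda,\,t}$. Multiplying over all $t\in\lambda$ gives $w_{T_0}=\prod_{t\in\lambda}c_{\lambda,\,t}=c_\lambda$, which is the claim.

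The step I expect to demand the most care is the box-by-box bookkeeping of the third paragraph: tracking, for each step $i$, which boxes of $\lambda^{(i-1)}$ appear in the numerator and which in the denominator, checking that the $B$/$C$ dichotomy selects $c$ (not $c'$) precisely when the new row meets the column of the box, and verifying that the $c'$-factors genuinely cancel in pairs so that only the telescoping $c$-factors survive. The remaining ingredients — uniqueness of $T_0$ and the identity $c_{\lambda^{(r')},\,t}=c_{\lambda,\,t}$ — are short direct verifications.
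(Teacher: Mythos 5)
Your argument is correct. A few remarks on how it sits relative to the paper. The paper does not prove this statement at all: it is quoted from Stanley \cite[Theorem 5.6]{S} as a known result, so there is no in-paper proof to compare against. Your derivation from Theorem~\ref{SThm6.3} is sound in every step I checked: the uniqueness of the tableau $T_0$ with $x^{T}=x^\lambda$ (via column-strictness forcing labels $\geq$ row index, plus the counting argument), the observation that for a fixed box $t$ and a fixed step $i$ the $B$- and $C$-factors make the same choice between $c$ and $c'$, the telescoping of the $c$-factors over the steps $r< i\leq \lambda'_c$, the triviality of the $c'$-ratios for $i>\lambda'_c$, and the identification $c_{\lambda^{(r')},\,t}=c_{\lambda,\,t}$. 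In fact your computation is exactly the case $\mu=\emptyset$ of the computation underlying Proposition~\ref{leading-coefficient}: for $\mu=\emptyset$ the distinguished tableau defined there is your $T_0$, one has $r_i=i\leq \lambda'_j=c_j$ for every $(i,j)\in\lambda$, and the formula collapses to $\prod_{s\in\lambda}c_{\lambda,s}=c_\lambda$. So your route is consistent with, and essentially subsumed by, the more general statement the paper already records.

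One caveat worth being aware of, though it does not affect the validity of your argument within this paper's framework: in Stanley's original development, Theorem 6.3 is established \emph{after} (and using machinery built on) the leading-coefficient formula of Theorem 5.6, via the Pieri-type rules. So as a from-scratch proof of Stanley's theorem your argument would be circular; as a derivation inside the present paper, which takes Theorem~\ref{SThm6.3} as a black box, it is a legitimate and complete verification.
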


The following is a generalization of the above formula, consequence of Theorem~\ref{SThm6.3} and actually a reformulation of Proposition 8.6 in \cite{S}.

Consider the standard skew tableau $T$ of shape $\lambda/\mu$ obtained by labelling the boxes along every column with consecutive integers, that is, the boxes in column $j$ are labelled with the integers $1,\ldots,c_j$. Set also $c_j = 0$ if no label appears on column $j$. Similarly, let $r_i$ be the rightmost label in row $i$, and set $r_i = 0$ if no label appears on the row $i$.

Let $\nu_0$ be the weight of the above defined tableau $T$, then $\nu \leq \nu_0$ for all partitions $\nu$ with $v_{\lambda/\mu,\, \nu}\neq 0$, that is $v_{\lambda/\mu,\, \nu_0}$ is the leading coefficient in 
\[J_{\lambda/\mu}(x;\alpha)=\sum_\nu v_{\lambda/\mu,\, \nu}(\alpha)\,m_\nu(x).\]

\begin{proposition}{\cite[Proposition 8.6]{S}}	\label{leading-coefficient}
The leading coefficient $v_{\lambda/\mu,\, \nu_0}$ of $J_{\lambda/\mu}$ is equal to
\[
\prod_{\tiny\begin{array}{c}(i,j)\in\lambda\\ r_i \leq c_j\end{array}} c_{\lambda,(i,j)}
\prod_{\tiny\begin{array}{c}(i,j)\in\lambda\\ r_i > c_j\end{array}} c'_{\lambda,(i,j)}
\prod_{\tiny\begin{array}{c}(i,j)\in\mu\\ r_{i+c_j} > c_j\end{array}} c_{\mu,(i,j)}
\prod_{\tiny\begin{array}{c}(i,j)\in\mu\\ r_{i+c_j}\leq c_j\end{array}} c'_{\mu,(i,j)}.
\]
\end{proposition}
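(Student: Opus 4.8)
The plan is to apply Theorem~\ref{SThm6.3} to the specific standard tableau $T$ described above and to show that its weight is exactly $\nu_0$ (so that $x^T = x^{\nu_0}$) while checking that no other standard tableau of shape $\lambda/\mu$ has weight comparable to $\nu_0$ from above; by the preceding discussion $v_{\lambda/\mu,\,\nu_0}$ is then the sum of $w_{T'}(\alpha)$ over all standard $T'$ of weight $\nu_0$. One expects, and should verify, that $T$ is in fact the \emph{unique} such tableau: since every column of the skew shape must carry the labels $1,2,\ldots,c_j$ in increasing order (strict increase down columns forces this once the multiset of labels in each column is fixed to be an initial segment), the only freedom is in the multiset of column-labels, and maximality of the weight in dominance order pins this down to initial segments. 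Granting uniqueness, we get $v_{\lambda/\mu,\,\nu_0} = w_T(\alpha)$, and the whole problem reduces to evaluating $w_T(\alpha)$ for this one tableau.

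Next I would unwind the definition of $w_T$ for this $T$. Writing $\lambda^{(i)}$ for the sub-skew-shape of $\lambda/\mu$ consisting of boxes with label $\leq i$ together with $\mu$, the horizontal strip $\lambda^{(i)}/\lambda^{(i-1)}$ consists precisely of those boxes $(i',j)$ of $\lambda/\mu$ with $i' = i - c_j' $ in the appropriate indexing — concretely, the box of label $i$ in column $j$ exists iff $1 \le i \le c_j$, and it sits in a row determined by $j$ and by how far up $\mu$ reaches in that column. The product $w_T = j_\mu \prod_i \prod_{s\in\lambda^{(i)}} B_{\lambda^{(i)}/\lambda^{(i-1)},s} \big/ \prod_i \prod_{s \in \lambda^{(i-1)}} C_{\lambda^{(i)}/\lambda^{(i-1)},s}$ is then a large product of factors $c_{\lambda^{(i)},s}$, $c'_{\lambda^{(i)},s}$ in the numerator and $c_{\lambda^{(i-1)},s}$, $c'_{\lambda^{(i-1)},s}$ in the denominator. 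The strategy is a telescoping argument: for a fixed box $s$ lying in $\lambda/\mu$ or in $\mu$, track the contribution of $s$ to the numerator (coming from each $i$ with $s \in \lambda^{(i)}$) and to the denominator (coming from each $i$ with $s \in \lambda^{(i-1)}$). Most consecutive factors cancel because $c_{\lambda^{(i)},s}$ and $c'_{\lambda^{(i)},s}$ depend on the arm and leg of $s$ within $\lambda^{(i)}$, and once $i$ is large enough that $s$ is far from the moving boundary these are the same as within $\lambda^{(i+1)}$; the surviving factors are the "boundary" ones, plus the $j_\mu = c_\mu c_\mu'$ prefactor.

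The bookkeeping then separates into two cases according to whether $s \in \lambda/\mu$ or $s \in \mu$. For $s = (i,j) \in \lambda/\mu$ the factor that survives is the one from the step $i^\ast$ at which the horizontal strip "arrives at or below" column $j$ for the last relevant time; whether it is $c_{\lambda,s}$ or $c_{\lambda,s}'$ is governed by whether the strip being added at that step has a box in column $j$, which translates (after chasing the row/column indices) into the condition $r_i \le c_j$ versus $r_i > c_j$ — giving the first two products in the claimed formula. For $s = (i,j) \in \mu$, the corresponding surviving factor interacts with the $j_\mu = c_\mu c_\mu'$ prefactor: one of $c_{\mu,s}$, $c_{\mu,s}'$ from $j_\mu$ is cancelled by a denominator contribution and the other is left, again with a $B$-versus-$C$ dichotomy that unwinds to the condition $r_{i+c_j} > c_j$ versus $r_{i+c_j} \le c_j$, producing the last two products. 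I expect the genuine obstacle to be precisely this index-chasing: correctly matching, for each box, the step $i$ at which its surviving factor appears, and checking that the "has a box in the same column" condition in the definitions of $B$ and $C$ really does coincide with the stated inequalities $r_i \lessgtr c_j$ and $r_{i+c_j} \lessgtr c_j$ — a calculation that is elementary but must be done with care about whether one measures arms and legs in $\lambda$, in $\mu$, or in an intermediate $\lambda^{(i)}$, and about off-by-one issues in the column-filling convention.
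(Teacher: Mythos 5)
The paper does not actually prove this statement: it is quoted from Stanley (\cite[Proposition 8.6]{S}) and only described as a consequence of Theorem~\ref{SThm6.3}, so there is no in-paper argument to compare yours against. Your overall route --- identify the column-superstandard filling $T$ as the unique standard tableau of weight $\nu_0$, conclude $v_{\lambda/\mu,\,\nu_0}=w_T$, and then evaluate $w_T$ --- is the natural one and surely the intended one. The first half is essentially complete: in any standard tableau each column $j$ carries at most $\min(i,c_j)$ labels $\leq i$, which gives both the dominance bound $\nu\leq\nu_0$ and, by forcing equality in every partial sum, the uniqueness of $T$.

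The gap is that the second half --- the evaluation of $w_T$, which is the entire content of the proposition --- is described but never carried out. Your picture of the telescoping is also somewhat too optimistic. For a fixed box $s$, the numerator factor $B_{\lambda^{(i)}/\lambda^{(i-1)},\,s}$ and the denominator factor $C_{\lambda^{(i+1)}/\lambda^{(i)},\,s}$ are both computed in the \emph{same} intermediate partition $\lambda^{(i)}$, but they are of type $c$ or $c'$ according to whether the strip at step $i$, respectively at step $i+1$, meets the column of $s$; meanwhile the arm and leg of $s$ in $\lambda^{(i)}$ themselves change with $i$ as the strips fill in the row and the column of $s$. So in general several non-cancelling factors per box survive, and one must show that their product collapses to a single $c_{\lambda,s}$ or $c'_{\lambda,s}$ (resp.\ that, after absorbing the $c_{\mu,s}c'_{\mu,s}$ coming from $j_\mu$, a single $c_{\mu,s}$ or $c'_{\mu,s}$ remains), with the dichotomy governed by $r_i\lessgtr c_j$ (resp.\ $r_{i+c_j}\lessgtr c_j$). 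This is exactly the computation performed, in an analogous situation, in the proofs of Theorem~\ref{translation} and Lemma~\ref{rotation_horizontal_strip}, where the partial products over $i$ are written out explicitly and shown to telescope to ratios such as $c_{\lambda,s}/c_{\mu,s}$. Your proposal names this as ``index-chasing to be done with care'' but does not do it, and without it the identity asserted in the proposition is not established; as written this is a plan for a proof rather than a proof.
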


\begin{remark}
It follows that $\tilde v_{\lambda/\mu,\, \nu_0}$ is a polynomial with nonnegative integer coefficients. If indeed $(i,j)$ is the last element occurring in a row of $\lambda/\mu$, then by definition it holds $r_i \leq c_j$. Therefore, in the first factor appearing in the formula of Proposition \ref{leading-coefficient}, the entry $(i,j)$ gives a contribution $c_{\lambda,(i,j)} = \ell_{\lambda,(i,j)}$. On the other hand, if $(\nu_0)_k = (\nu_0)_{k+1}$, then every box in $T$ labeled by $k$ lies above a box labeled by $k+1$: thus we see that $\tilde v_{\lambda/\mu,\, \nu_0}$ is also a polynomial with nonnegative integer coefficients.
\end{remark}

\subsection{Translation of the skew diagram}

Another consequence of Theorem~\ref{SThm6.3} is the following. 

Let $\lambda/\mu$ and $\tilde \lambda/\tilde \mu$ be skew partitions, we say that the corresponding diagrams \textit{coincide up to translation} if the diagram of $\tilde \lambda/\tilde \mu$ can be obtained from the diagram of $\lambda/\mu$ by adding and/or removing \emph{empty} rows $\mu_i=\lambda_i$ to/from the top of the diagram and/or \emph{empty} columns $\mu'_j=\lambda'_j$ to/from the leftmost part of the diagram.

Fix a skew partition $\lambda/\mu$. Notice that there exists a unique minimal skew partition $\tilde \lambda/\tilde \mu$ whose diagram coincides with that $\lambda/\mu$ up to translation (see Figure~\ref{translation_example} for an example). Formally, let $i_0\geq0$ be the greatest integer such that $\mu_i=\lambda_i$ for all $1\leq i\leq i_0$ and let $j_0\geq0$ be the greatest integer such that $\mu'_j=\lambda'_j$ for all $1\leq j\leq j_0$, then for all $k>0$
\[\tilde\lambda_k=\left\{\begin{array}{ll}\lambda_{i_0+k}-j_0 & \text{if }k\leq \lambda'_{j_0+1} \\ 0 & \text{if }k>\lambda'_{j_0+1}\end{array}\right.
\text{ and }
\tilde\mu_k=\left\{\begin{array}{ll}\mu_{i_0+k}-j_0 & \text{if }k\leq \mu'_{j_0+1} \\ 0 & \text{if }k>\mu'_{j_0+1}\end{array}\right..\] 

\begin{figure}\caption{}\label{translation_example}
\begin{picture}(160,120)%\multiput(0,0)(0,120){2}{\line(1,0){160}}
\put(0,0){$\young(\bs\bs\bs\bs\bs\bs\bs,\bs\bs\bs\bs\bs\bs,\bs\bs\bs\ \ \ ,\bs\bs\bs\ ,\bs\bs\bs,\bs\bs\bs,\bs\ ,\bs\ ,\bs)$} 
\put(30,110){$\lambda/\mu$} 
\put(110,33){$\young(\bs\bs\ \ \ ,\bs\bs\ ,\bs,\bs,\ ,\ )$}
\put(130,110){$\tilde\lambda/\tilde\mu$}
\end{picture}
\end{figure}

For all $s\in\mu$ set
\[c_{\mu,\lambda,\,s}=c_{\mu,\lambda,\,s}(\alpha)=a_{\mu,\,s}\alpha+\ell_{\lambda,\,s}+1,\quad c'_{\lambda,\mu,\,s}= c'_{\lambda,\mu,\,s}(\alpha)=(a_{\lambda,\,s}+1)\alpha+\ell_{\mu,\,s},\]
and set also
\[c_{\mu,\lambda}=c_{\mu,\lambda}(\alpha)=\prod_{s\in\mu}c_{\mu,\lambda,\,s},\quad c'_{\lambda,\mu}=c'_{\lambda,\mu}(\alpha)=\prod_{s\in\mu}c'_{\lambda,\mu,\,s}.\]
Notice that $c'_{\lambda,\mu}(\alpha) = \alpha^{|\mu|} \, c_{\mu',\lambda'}(\alpha^{-1})$.

\begin{theorem}\label{translation}
If the diagrams of $\lambda/\mu$ and $\tilde\lambda/\tilde\mu$ coincide up to translation, then  
\[c_{\mu,\lambda} \, c'_{\lambda,\mu} \, J_{\tilde\lambda/\tilde\mu}
=c_{\tilde\mu,\tilde\lambda} \, c'_{\tilde\lambda,\tilde\mu}\, J_{\lambda/\mu}\]
\end{theorem}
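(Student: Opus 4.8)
The plan is to use Stanley's combinatorial formula (Theorem~\ref{SThm6.3}) and establish a weight-preserving, coefficient-rescaling bijection between the standard tableaux of shape $\lambda/\mu$ and those of shape $\tilde\lambda/\tilde\mu$. Since the diagrams coincide up to translation, there is an obvious such bijection $T\mapsto\tilde T$: just relabel rows and columns by the shift $(i,j)\mapsto(i-i_0,j-j_0)$. This bijection clearly preserves the monomial $x^T=x^{\tilde T}$, so the content of the theorem is entirely the comparison of the weights $w_T(\alpha)$ and $w_{\tilde T}(\alpha)$. Concretely, I would aim to prove the pointwise identity
\[
c_{\mu,\lambda}\,c'_{\lambda,\mu}\;w_{\tilde T}(\alpha)\;=\;c_{\tilde\mu,\tilde\lambda}\,c'_{\tilde\lambda,\tilde\mu}\;w_T(\alpha)
\]
for every standard tableau $T$ of shape $\lambda/\mu$, after which summing over $T$ and applying Theorem~\ref{SThm6.3} gives the result.

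To compare the weights, recall $w_T = j_\mu\prod_i\prod_{s\in\lambda^{(i)}}B_{\lambda^{(i)}/\lambda^{(i-1)},s}\big/\prod_i\prod_{s\in\lambda^{(i-1)}}C_{\lambda^{(i)}/\lambda^{(i-1)},s}$, and similarly for $\tilde T$ with the chain $\tilde\lambda^{(i)}/\tilde\mu$. The key local observation is that the quantities $c_{\lambda,s}$ and $c'_{\lambda,s}$ depend only on the arm and leg of $s$, which are manifestly invariant under adding or deleting empty rows on top and empty columns on the left \emph{provided} $s$ lies in the part of the diagram that is genuinely present in both; the subtlety is exactly the boxes of the ambient partition $\lambda$ (resp.\ $\mu$) that lie in the removed empty strip — these have their arms unchanged but their legs changed, or conversely. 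So I would split each of the four products $\prod_{s\in\lambda^{(i)}}$, $\prod_{s\in\lambda^{(i-1)}}$ into the contribution of boxes $s$ surviving in $\tilde\lambda^{(i)}$ (which match up termwise) and the contribution of boxes $s$ in the removed strip. It is cleaner to reduce to the two elementary moves separately: (a) removing a single empty top row $\mu_1=\lambda_1$, and (b) removing a single empty leftmost column $\mu'_1=\lambda'_1$; by transposition symmetry (using $c'_{\lambda/\mu}(\alpha)=\alpha^{|\mu|}c_{\mu',\lambda'}(\alpha^{-1})$ and the analogous relations already recorded) it suffices to treat move (a).

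For move (a), write $\tilde\lambda,\tilde\mu$ for $\lambda,\mu$ with the first (empty) row deleted, so $\tilde\mu_i=\mu_{i+1}$, $\tilde\lambda_i=\lambda_{i+1}$, and in every standard tableau $T$ the first row is empty, so the chains are related by $\tilde\lambda^{(i)}=\lambda^{(i)}$ minus its (empty, since $\mu_1=\lambda_1$) first row. Deleting the top row decreases by $1$ the leg $\ell_{\lambda,s}$ of every box $s=(1,j)$ in the top row of $\lambda$, i.e.\ for $1\le j\le\lambda_1=\mu_1$, and leaves all arms and all other legs unchanged; the same holds for $\mu$. Therefore in the numerator product for $w_T$ the extra factors, relative to $w_{\tilde T}$, are precisely $\prod_{j=1}^{\lambda_1}$ over the relevant $B$'s of the top-row boxes, and in the denominator the extra factors are $\prod_{j=1}^{\mu_1}$ over the relevant $C$'s; tracking the "same column" condition, a box $(1,j)$ with $1\le j\le\mu_1=\lambda_1$ has a box of $\lambda^{(i)}/\lambda^{(i-1)}$ in its column exactly when that strip meets column $j$, and in all cases (summed over $i$, using that the strips partition $\lambda/\mu$) the net extra numerator factor telescopes to $\prod_{(1,j),\,1\le j\le\lambda_1} c_{\lambda,(1,j)}$ or $c'_{\lambda,(1,j)}$ according to whether column $j$ of $\lambda/\mu$ is nonempty, i.e.\ according to $\lambda'_j>\mu'_j$ versus $\lambda'_j=\mu'_j$; but on the top row $(1,j)\in\mu$ for all $1\le j\le\mu_1$, so this is exactly $\prod_{s\in\text{top row of }\mu} (\text{something involving }\lambda)$, and one recognizes $a_{\mu,(1,j)}\alpha+\ell_{\lambda,(1,j)}+1=c_{\mu,\lambda,(1,j)}$ and $(a_{\lambda,(1,j)}+1)\alpha+\ell_{\mu,(1,j)}=c'_{\lambda,\mu,(1,j)}$. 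A parallel computation handles the denominator with $\tilde\mu,\tilde\lambda$ in place of $\mu,\lambda$ and also the factor $j_\mu/j_{\tilde\mu}$, which by Theorem~\ref{norma} contributes $c_\mu c'_\mu/(c_{\tilde\mu}c'_{\tilde\mu})$, again a product of top-row contributions. Assembling everything, the ratio $w_T/w_{\tilde T}$ is independent of $T$ and equals $c_{\tilde\mu,\tilde\lambda}c'_{\tilde\lambda,\tilde\mu}/(c_{\mu,\lambda}c'_{\lambda,\mu})$, as desired. The main obstacle I anticipate is the bookkeeping in this telescoping step: one must be careful that the products over $i$ of the $B$'s and $C$'s attached to top-row boxes really collapse to the claimed single product over $\mu$, and that the "has a box in the same column" dichotomy lines up consistently across the whole chain — this is where a careful induction on the number of standard tableaux, or rather on the length of the chain, keeps the argument honest; everything else is the routine observation that arms and legs are translation-invariant.
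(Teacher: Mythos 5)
Your proposal is correct and follows essentially the same route as the paper: both reduce, via Theorem~\ref{SThm6.3}, to the pointwise identity $c_{\mu,\lambda}\,c'_{\lambda,\mu}\,w_{\tilde T}=c_{\tilde\mu,\tilde\lambda}\,c'_{\tilde\lambda,\tilde\mu}\,w_T$ for tableaux arising from the same filling, and verify it by a box-by-box analysis in which the $B/C$ factors (together with the $j_\mu$ contribution) attached to a box lying in an empty row or column telescope to exactly $c_{\mu,\lambda,\,s}\,c'_{\lambda,\mu,\,s}$. The only differences are organizational (the paper handles the empty-row and empty-column boxes directly in a single three-case argument rather than reducing to elementary moves plus transposition duality) and one garbled sentence of yours --- deleting the top row does not ``decrease the legs'' of the top-row boxes, it removes them, and it is precisely their leftover factors that must be collected --- which does not affect the validity of the computation that follows.
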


As a consequence of the previous formula, by \cite{KS} we get

\begin{corollary}\label{cor:translation}
Let $\lambda/\mu$ be a skew partition, and suppose that the diagram $\lambda/\mu$ coincides up to translation with that of a partition $\tilde\lambda=\tilde\lambda/\emptyset$. Then
\[
J_{\lambda/\mu} = c_{\mu,\lambda} \, c'_{\lambda,\mu}\, J_{\tilde \lambda}\]
In particular, $\tilde v_{\lambda/\mu,\,\nu}(\alpha)$ is a polynomial with nonnegative integer coefficients, for all partitions $\nu$. Furthermore, we have 
\[
	g^\lambda_{\mu, \tilde \lambda} = c_{\mu,\lambda} \, c'_{\lambda,\mu} \, c_{\tilde \lambda} \, c'_{ \tilde \lambda}.
\]
\end{corollary}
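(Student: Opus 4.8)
The plan is to derive all three assertions from Theorem~\ref{translation}, specialized to the pair of skew partitions $\lambda/\mu$ and $\tilde\lambda/\emptyset$; since by hypothesis these two diagrams coincide up to translation, the theorem does apply. First I would plug $\tilde\mu=\emptyset$ into its statement. On the right-hand side one has $c_{\tilde\mu,\tilde\lambda}=\prod_{s\in\emptyset}c_{\tilde\mu,\tilde\lambda,\,s}=1$ and, likewise, $c'_{\tilde\lambda,\tilde\mu}=1$; on the left-hand side $J_{\tilde\lambda/\emptyset}=J_{\tilde\lambda}$, because from the defining property of the skew Jack functions together with $J_\emptyset=1$ we get $\langle J_{\tilde\lambda/\emptyset},J_\rho\rangle=\langle J_{\tilde\lambda},J_\emptyset J_\rho\rangle=\langle J_{\tilde\lambda},J_\rho\rangle$ for every partition $\rho$, and then nondegeneracy of the scalar product forces $J_{\tilde\lambda/\emptyset}=J_{\tilde\lambda}$. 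Thus Theorem~\ref{translation} reduces to $c_{\mu,\lambda}\,c'_{\lambda,\mu}\,J_{\tilde\lambda}=J_{\lambda/\mu}$, which is the displayed identity.

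Next I would pair this identity with $J_{\tilde\lambda}$ to get the formula for the Stanley $g$-polynomial. As $c_{\mu,\lambda}\,c'_{\lambda,\mu}\in\mathbb Q(\alpha)$ is a scalar, it pulls out of the (bilinear) scalar product, so
\[g^\lambda_{\mu,\tilde\lambda}=\langle J_\lambda,J_\mu J_{\tilde\lambda}\rangle=\langle J_{\lambda/\mu},J_{\tilde\lambda}\rangle=c_{\mu,\lambda}\,c'_{\lambda,\mu}\,\langle J_{\tilde\lambda},J_{\tilde\lambda}\rangle=c_{\mu,\lambda}\,c'_{\lambda,\mu}\,j_{\tilde\lambda},\]
where the first equality is the definition of $g^\lambda_{\mu,\tilde\lambda}$ and the second is the defining property of $J_{\lambda/\mu}$. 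Inserting $j_{\tilde\lambda}=c_{\tilde\lambda}\,c'_{\tilde\lambda}$ from Theorem~\ref{norma} gives the claimed expression.

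Finally, for the positivity statement I would compare the coefficients of the monomial symmetric functions on the two sides of $J_{\lambda/\mu}=c_{\mu,\lambda}\,c'_{\lambda,\mu}\,J_{\tilde\lambda}$: for every partition $\nu$ this gives $v_{\lambda/\mu,\,\nu}=c_{\mu,\lambda}\,c'_{\lambda,\mu}\,v_{\tilde\lambda,\,\nu}$, hence $\tilde v_{\lambda/\mu,\,\nu}=c_{\mu,\lambda}\,c'_{\lambda,\mu}\,\tilde v_{\tilde\lambda,\,\nu}$ after dividing by $u_\nu$. Now $c_{\mu,\lambda}\,c'_{\lambda,\mu}$ is, by its very definition, the product over $s\in\mu$ of the linear polynomials $a_{\mu,\,s}\alpha+\ell_{\lambda,\,s}+1$ and $(a_{\lambda,\,s}+1)\alpha+\ell_{\mu,\,s}$, all of which lie in $\mathbb Z_{\geq0}[\alpha]$; and $\tilde v_{\tilde\lambda,\,\nu}\in\mathbb Z_{\geq0}[\alpha]$ is precisely the affirmative answer to the Main Question in the ordinary (non-skew) case $\mu=\emptyset$, which follows from \cite{KS}. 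Since $\mathbb Z_{\geq0}[\alpha]$ is closed under multiplication, $\tilde v_{\lambda/\mu,\,\nu}\in\mathbb Z_{\geq0}[\alpha]$ as well. I do not expect a genuine obstacle in the corollary itself: apart from the positivity input from \cite{KS}, every step is a formal manipulation of Theorem~\ref{translation}, so that all the real work has already been done in the proof of that theorem.
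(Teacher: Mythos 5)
Your proposal is correct and follows essentially the same route as the paper, which simply states that the corollary is a consequence of Theorem~\ref{translation} together with \cite{KS}: your specialization $\tilde\mu=\emptyset$ (so that $c_{\tilde\mu,\tilde\lambda}=c'_{\tilde\lambda,\tilde\mu}=1$ and $J_{\tilde\lambda/\emptyset}=J_{\tilde\lambda}$), the pairing with $J_{\tilde\lambda}$, and the appeal to Theorem~\ref{norma} are exactly the intended routine steps. The only point you leave at the same level of implicitness as the paper is that $u_\nu$ divides $v_{\tilde\lambda,\nu}$ in $\mathbb Z_{\geq 0}[\alpha]$; this does follow from \cite{KS}, since the group of order $u_\nu$ permuting labels of equal multiplicity acts freely on the admissible tableaux of weight $\nu$ and preserves $d_T$.
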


%In particular, if the diagram of the skew partition $\lambda/\mu$ coincides up to translation with that of a partition $\tilde\lambda=\tilde\lambda/\emptyset$, then 
%\[v_{\lambda/\mu,\,\nu}(\alpha)=c_{\mu,\lambda}(\alpha) \, c'_{\lambda,\mu}(\alpha) \, v_{\tilde\lambda,\nu}(\alpha).\]

Thanks to Theorem \ref{SThm6.3}, the previous theorem follows from

\begin{proposition}
Suppose that the diagrams of $\lambda/\mu$ and $\tilde\lambda/\tilde\mu$ coincide up to translation. Let $T$ and $\tilde T$ be standard tableaux respectively of shape $\lambda/\mu$ and $\tilde\lambda/\tilde\mu$ arising from a same filling. Then  
\[c_{\mu,\lambda} \, c'_{\lambda,\mu} \, w_{\tilde T }
=c_{\tilde\mu,\tilde\lambda} \, c'_{\tilde\lambda,\tilde\mu}\, w_{T}\]
\end{proposition}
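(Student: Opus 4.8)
The plan is to divide the asserted equality through by $c_{\tilde\mu,\tilde\lambda}\,c'_{\tilde\lambda,\tilde\mu}\,w_{\tilde T}$, turning it into
\[
\frac{w_T}{w_{\tilde T}}=\frac{c_{\mu,\lambda}\,c'_{\lambda,\mu}}{c_{\tilde\mu,\tilde\lambda}\,c'_{\tilde\lambda,\tilde\mu}}.
\]
In this shape the statement is symmetric in $\lambda/\mu$ and $\tilde\lambda/\tilde\mu$ and multiplicative under composition of translations; since any two diagrams coinciding up to translation are joined --- through their common minimal representative --- by a finite chain of moves, each of which deletes a single empty top row (the case $\mu_1=\lambda_1$) or a single empty leftmost column (the case $\mu'_1=\lambda'_1$), it suffices to prove the identity when $\tilde\lambda/\tilde\mu$ is obtained from $\lambda/\mu$ by one such deletion. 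I will carry out the deletion of an empty top row in detail; that of an empty leftmost column is the mirror image, with the roles of rows and columns, and of arms and legs, interchanged.

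So suppose $\mu_1=\lambda_1=:r$ and $\tilde\lambda/\tilde\mu$ is $\lambda/\mu$ with its empty first row removed, and let $\mu=\lambda^{(0)}\subseteq\cdots\subseteq\lambda^{(m)}=\lambda$ be the chain of partitions determined by $T$ (each step a horizontal strip). From $\mu_1\le\lambda^{(k)}_1\le\lambda_1$ one gets $\lambda^{(k)}_1=r$ for every $k$, so no strip $\lambda^{(k)}/\lambda^{(k-1)}$ meets row $1$, and the chain attached to $\tilde T$ is obtained by deleting the first row from each $\lambda^{(k)}$. The first point is that the deletion affects only the first row: for a box in rows $\ge2$, its arm and its leg --- computed in any of $\mu,\lambda^{(1)},\dots,\lambda$ --- and the property of lying in the column of the strip $\lambda^{(k)}/\lambda^{(k-1)}$ all carry over verbatim to the corresponding box of the deleted diagrams. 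Hence the $B$- and $C$-factors attached to boxes in rows $\ge2$ coincide for $w_T$ and $w_{\tilde T}$, and likewise $c_{\mu,\lambda}=\big(\prod_{j=1}^{r}c_{\mu,\lambda,(1,j)}\big)c_{\tilde\mu,\tilde\lambda}$, $\;c'_{\lambda,\mu}=\big(\prod_{j=1}^{r}c'_{\lambda,\mu,(1,j)}\big)c'_{\tilde\lambda,\tilde\mu}$, and, by Theorem~\ref{norma}, $j_\mu=\big(\prod_{j=1}^{r}c_{\mu,(1,j)}c'_{\mu,(1,j)}\big)j_{\tilde\mu}$. Since the diagrams $\tilde\lambda^{(k)}$ have no first row, the only surviving discrepancy is
\[
\frac{w_T}{w_{\tilde T}}=\frac{j_\mu}{j_{\tilde\mu}}\,\prod_{j=1}^{r}\;\prod_{k\ge1}\frac{B_{\lambda^{(k)}/\lambda^{(k-1)},\,(1,j)}}{C_{\lambda^{(k)}/\lambda^{(k-1)},\,(1,j)}}.
\]

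The crux is that the inner product over $k$ telescopes. Fix $j\le r$ and set $h_k=(\lambda^{(k)})'_j$, so that $h_0=\mu'_j$, $h_k$ stabilizes at $\lambda'_j$, the increments $h_k-h_{k-1}$ lie in $\{0,1\}$, and $\lambda^{(k)}/\lambda^{(k-1)}$ has a box in column $j$ exactly when $h_k=h_{k-1}+1$. If $h_k=h_{k-1}+1$, the two factors are $c_{\lambda^{(k)},(1,j)}=(r-j)\alpha+h_k$ and $c_{\lambda^{(k-1)},(1,j)}=(r-j)\alpha+h_{k-1}$; if $h_k=h_{k-1}$, they are $c'_{\lambda^{(k)},(1,j)}=(r-j+1)\alpha+h_k-1$ and $c'_{\lambda^{(k-1)},(1,j)}=(r-j+1)\alpha+h_{k-1}-1$, which are equal. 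In either case the $k$-th ratio equals $\big((r-j)\alpha+h_k\big)/\big((r-j)\alpha+h_{k-1}\big)$, so the product over $k$ collapses to $\big((r-j)\alpha+\lambda'_j\big)/\big((r-j)\alpha+\mu'_j\big)=c_{\mu,\lambda,(1,j)}/c_{\mu,(1,j)}$. Feeding this back, together with $j_\mu/j_{\tilde\mu}=\prod_{j=1}^{r}c_{\mu,(1,j)}c'_{\mu,(1,j)}$, gives
\[
\frac{w_T}{w_{\tilde T}}=\prod_{j=1}^{r}c'_{\mu,(1,j)}\,c_{\mu,\lambda,(1,j)},
\]
and since $\lambda_1=\mu_1=r$ one has $c'_{\mu,(1,j)}=(r-j+1)\alpha+\mu'_j-1=c'_{\lambda,\mu,(1,j)}$, so the right-hand side equals $\prod_{j=1}^{r}c_{\mu,\lambda,(1,j)}c'_{\lambda,\mu,(1,j)}=c_{\mu,\lambda}\,c'_{\lambda,\mu}/\big(c_{\tilde\mu,\tilde\lambda}\,c'_{\tilde\lambda,\tilde\mu}\big)$, which is the assertion.

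For an empty leftmost column the computation is the mirror image: the box $(i,1)$ lies in column $1$ of every $\lambda^{(k)}$, a column of constant height $\mu'_1=\lambda'_1$, so its $B$- and $C$-factors are always of $c'$-type, and telescoping along the row lengths $\lambda^{(k)}_i$ produces $c'_{\lambda,\mu,(i,1)}/c'_{\mu,(i,1)}$; this combines with $j_\mu/j_{\tilde\mu}$ and the identity $c_{\mu,(i,1)}=c_{\mu,\lambda,(i,1)}$ (valid because $\lambda'_1=\mu'_1$) to give $w_T/w_{\tilde T}=c_{\mu,\lambda}\,c'_{\lambda,\mu}/\big(c_{\tilde\mu,\tilde\lambda}\,c'_{\tilde\lambda,\tilde\mu}\big)$ once more. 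The one genuinely delicate point is recognizing that the long product over the chain telescopes; once the boxes of row $1$ (respectively of column $1$) are singled out, everything else is routine arm--leg bookkeeping, with Theorem~\ref{norma} accounting for the factor $j_\mu/j_{\tilde\mu}$.
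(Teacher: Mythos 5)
Your proof is correct and follows essentially the same route as the paper's: the paper reduces to the common minimal representative and classifies the boxes of $\lambda$ into three types, handling the boxes lying in empty top rows (resp.\ empty left columns) by exactly the telescoping of the $B/C$-ratios along the chain $\mu=\lambda^{(0)}\subset\cdots\subset\lambda^{(m)}=\lambda$ that you carry out, combined with the factor $c_{\mu,s}c'_{\mu,s}$ coming from $j_\mu$. The only difference is organizational --- you peel off one empty row or column at a time and invoke transitivity of the ratio identity, whereas the paper treats all the extra boxes simultaneously --- and this does not change the substance of the argument.
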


\begin{proof}
We can assume that $\tilde \lambda/\tilde \mu$ is the minimal skew partition having the same diagram of $\lambda/\mu$ up to translation.

Let $s\in\lambda$. We analyse the contribution of $s$ in $w_T$ by distinguishing three different cases.

\begin{enumerate}

\item Suppose that both the row and the column of $s$ meet $\lambda/\mu$. Then $s$ corresponds to a box $\tilde s \in \tilde\lambda$. If $s \in \mu$, then we have $c_{\mu,\lambda,\,s}=c_{\tilde\mu,\tilde\lambda,\,\tilde s}$, $c'_{\lambda,\mu,\,s}=c'_{\tilde\lambda,\tilde\mu,\,\tilde s}$, $c_{\mu,\,s}=c_{\tilde\mu,\,\tilde s}$, $c'_{\mu,\,s}=c'_{\tilde\mu,\,\tilde s}$, and if moreover $s \in \lambda^{(i)}$ (resp. $s \in \lambda^{(i-1)}$) then we also have 
\begin{align*}
B_{\lambda^{(i)}/\lambda^{(i-1)},\,s}&= B_{\tilde\lambda^{(i)}/\tilde\lambda^{(i-1)},\,\tilde s}\\ C_{\lambda^{(i)}/\lambda^{(i-1)},\,s}& = C_{\tilde\lambda^{(i)}/\tilde\lambda^{(i-1)},\,\tilde s}\ .
\end{align*}

\item Suppose that $s \in \mu$ and that the corresponding column in $\lambda$ does not meet $\lambda/\mu$, namely $\ell_{\lambda,\,s} = \ell_{\mu,\,s}$. Then
$c_{\mu,\lambda,\, s} = c_{\mu,\,s}$ and $c'_{\lambda,\mu,\, s} = c'_{\lambda,\,s}$, and
 \begin{align*}
	 B_{\lambda^{(i)}/\lambda^{(i-1)},\,s} &= c'_{\lambda^{(i)},\,s} \\
	 C_{\lambda^{(i)}/\lambda^{(i-1)},\,s} &= c'_{\lambda^{(i-1)},\,s}
\end{align*}
for all $i$. Thus
\[
c_{\mu,\,s}\, c'_{\mu,\,s} \prod_i \frac{B_{\lambda^{(i)}/\lambda^{(i-1)},\,s}}{C_{\lambda^{(i)}/\lambda^{(i-1)},\,s}} = c_{\mu,\,s}\, c'_{\mu,\,s} \frac{c'_{\lambda,\,s}}{c'_{\mu,\,s}} = c_{\mu,\lambda,\,s}\, c'_{\lambda,\mu,\,s} \ .
\]
If $s$ corresponds to a box in $\tilde \lambda$, then the same is true for $\tilde s$:
\[
c_{\tilde\mu,\,\tilde s}\, c'_{\tilde\mu,\,\tilde s} \prod_i \frac{B_{\tilde\lambda^{(i)}/\tilde\lambda^{(i-1)},\,\tilde s}}{C_{\tilde \lambda^{(i)}/\tilde\lambda^{(i-1)},\,\tilde s}} = c_{\tilde\mu,\tilde\lambda,\,\tilde s}\, c'_{\tilde\lambda,\tilde\mu,\,\tilde s} \ .
\]

\item Suppose now that $s \in \mu$ and that the corresponding row in $\lambda$ does not meet $\lambda/\mu$, namely $a_{\lambda,\,s} = a_{\mu,\,s}$. Then
$c_{\mu,\lambda,\, s} = c_{\lambda,\,s}$ and $c'_{\lambda,\mu,\, s} = c'_{\mu,\,s}$. Denote $p = \ell_{\lambda,s} - \ell_{\mu,s}$ and assume $p > 0$. Let $i_1 < i_2 < \ldots < i_p$ be the entries of $T$ appearing in the column of $s$, and set $i_0 = 0$. Then
 \begin{align*}
	 B_{\lambda^{(i)}/\lambda^{(i-1)},\,s} &=
\left\{ \begin{array}{ll}
(a_{\lambda,\,s}+1) \alpha + \ell_{\mu,\,s} + k & \text{if } i_k < i < i_{k+1} \\
 a_{\lambda,\,s} \, \alpha + \ell_{\mu,\,s} + k +1 & \text{if } i = i_k  
\end{array} \right. \\
	 C_{\lambda^{(i)}/\lambda^{(i-1)},\,s} &= \left\{ \begin{array}{ll}
(a_{\lambda,\,s}+1) \alpha + \ell_{\mu,\,s} + k & \text{if } i_k < i < i_{k+1} \\
a_{\lambda,\,s} \, \alpha + \ell_{\mu,\,s} + k  & \text{if } i = i_k  
\end{array} \right.
\end{align*}
Thus we obtain
\[
c_{\mu,\,s} c'_{\mu,\,s} \prod_i \frac{B_{\lambda^{(i)}/\lambda^{(i-1)},\,s}}{C_{\lambda^{(i)}/\lambda^{(i-1)},\,s}} = c_{\mu,\,s} c'_{\mu,\,s} \frac{c_{\lambda,\,s}}{c_{\mu,\,s}} = c_{\mu,\lambda,\,s} c'_{\lambda,\mu,\,s}\ . 
\] 
If $s$ corresponds to a box $\tilde s \in \tilde \lambda$, then a similar equality holds for $\tilde s$.
\end{enumerate}
By the definition of $w_T$, it follows that the only boxes of $\lambda$ which give a contribution in the product $w_T/(c_{\mu,\lambda} c'_{\lambda,\mu})$ are those of the first kind. The claim follows.
\end{proof}

\subsection{Rotation of the skew diagram}
From Theorem~\ref{SThm6.3} we obtain also the following.

Fix $\beta=(b^h)$ a rectangular partition. Let $\lambda$ be a partition contained in $\beta$, that is, $\lambda_1\leq b$ and $\ell(\lambda)\leq h$. We denote here by $\hat\lambda$ the \textit{rotated complement} of $\lambda$ in $\beta$
\[\hat\lambda=(b-\lambda_h,b-\lambda_{h-1},\ldots,b-\lambda_1).\]
If $\mu\subset\lambda\subset\beta$, then the diagram of the skew partition $\hat\mu/\hat\lambda$ coincides with that of $\lambda/\mu$ up to a rotation.

\begin{theorem}	\label{rotation}
Let $\mu\subset\lambda$ be partitions and let $\hat \lambda \subset \hat \mu$ be the respective rotated complements in a rectangular partition containing $\lambda$. Then
\[c_{\mu,\lambda} \, c'_{\lambda,\mu} \, J_{\hat\mu/\hat\lambda}
=c_{\hat\lambda,\hat\mu} \, c'_{\hat\mu,\hat\lambda}\, J_{\lambda/\mu}\]
\end{theorem}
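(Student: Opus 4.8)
The plan is to mirror the proof of Theorem~\ref{translation}: use Stanley's combinatorial formula (Theorem~\ref{SThm6.3}) to reduce the claimed identity between skew Jack functions to an identity between the weights $w_T$ of individual standard tableaux. First I would set up the combinatorial bijection. If $T$ is a standard tableau of shape $\lambda/\mu$ with entries in $\{1,\dots,N\}$, let $\hat T$ be the tableau of shape $\hat\mu/\hat\lambda$ obtained by rotating the diagram of $T$ by a $\pi$ angle and replacing each entry $k$ by $N+1-k$. Rotation turns weakly increasing rows into weakly decreasing rows and strictly increasing columns into strictly decreasing columns, and reversing the alphabet undoes both reversals, so $\hat T$ is again standard; moreover the chain of shapes attached to $\hat T$ is $\hat\mu^{(m)}=\widehat{\lambda^{(N-m)}}$, the rotated complement in $\beta$ of the $(N-m)$-th shape of $T$. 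Thus $T\mapsto\hat T$ is a bijection between standard tableaux of shape $\lambda/\mu$ and of shape $\hat\mu/\hat\lambda$ with entries $\le N$, and $\mathrm{wt}(\hat T)$ is the reversal of $\mathrm{wt}(T)$.

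The core step, which I would isolate as a Proposition just as in the translation case, is the weight identity
\[
c_{\mu,\lambda}\,c'_{\lambda,\mu}\,w_{\hat T}=c_{\hat\lambda,\hat\mu}\,c'_{\hat\mu,\hat\lambda}\,w_T .
\]
Substituting $\hat\mu^{(m)}=\widehat{\lambda^{(N-m)}}$ and reindexing $m\leftrightarrow N-i$, the product defining $w_{\hat T}$ becomes a product over the same chain $\mu=\lambda^{(0)}\subset\cdots\subset\lambda^{(N)}=\lambda$, but with the roles of $\lambda^{(i)}$ and $\lambda^{(i-1)}$ interchanged and with every box $s=(i,j)$ replaced by its rotated image $\hat s=(h+1-i,b+1-j)$ and every shape by its rotated complement in $\beta=(b^h)$. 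I would then record the elementary dictionary translating arms and legs under $\pi$-rotation: for $s=(i,j)\in\tau\subseteq\beta$, the arm and leg of $\hat s$ relative to the rotated complement of $\tau$ in $\beta$ are expressible through $a_{\tau,s}$, $\ell_{\tau,s}$ and the ambient rectangle, so that each $B$ or $C$ factor of the rotated chain is an explicit $c$- or $c'$-type hook factor of the original data. Plugging this in, together with Theorem~\ref{norma} in the form $j_\mu=c_\mu c'_\mu$ and $j_{\hat\lambda}=c_{\hat\lambda}c'_{\hat\lambda}$, the bookkeeping must then show — exactly as in the three cases of the translation proof — that all boxes cancel in pairs except for a telescoping remainder equal to the quotient $c_{\hat\lambda,\hat\mu}\,c'_{\hat\mu,\hat\lambda}\big/\bigl(c_{\mu,\lambda}\,c'_{\lambda,\mu}\bigr)$. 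I expect it cleanest to organize the cancellation box by box over $\beta$, treating separately the boxes of $\hat\lambda$, of $\hat\mu/\hat\lambda$, and of $\beta/\hat\mu$.

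Finally I would assemble the conclusion. By Theorem~\ref{SThm6.3}, working with $n=|\lambda|-|\mu|$ variables, $J_{\hat\mu/\hat\lambda}(x_1,\dots,x_n)=\sum_{\hat T}w_{\hat T}\,x^{\hat T}$, and by the bijection and the weight identity this equals $\dfrac{c_{\hat\lambda,\hat\mu}\,c'_{\hat\mu,\hat\lambda}}{c_{\mu,\lambda}\,c'_{\lambda,\mu}}\sum_T w_T\,x^{\mathrm{rev}\,\mathrm{wt}(T)}$. Since $J_{\lambda/\mu}$ is a symmetric function, applying the reversal permutation $w_0\in S_n$ to the variables fixes it, so $\sum_T w_T\,x^{\mathrm{rev}\,\mathrm{wt}(T)}=\sum_T w_T\,x^{T}=J_{\lambda/\mu}$, which gives the theorem. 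As in the translation case, in the subcases where $\hat\mu/\hat\lambda$ (or $\lambda/\mu$) coincides up to translation with a straight shape, this also yields nonnegativity of the $\tilde v_{\lambda/\mu,\nu}$ and a factored formula for the relevant Stanley $g$-polynomial, which is Corollary~\ref{cor:rotation}.

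The hard part is the middle paragraph: the unglamorous verification that, after $\pi$-rotation and alphabet reversal, the $B$ and $C$ factors recombine with the hook products $c,c'$ (through $j_\mu=c_\mu c'_\mu$) to telescope down to the stated quotient. The conceptual ingredients — the tableau bijection and the final reduction via symmetry of $J_{\lambda/\mu}$ — are routine; the arithmetic of arms and legs under rotation is where the real work sits. One subtlety worth flagging is that $c_{\mu,\lambda}$ and $c'_{\lambda,\mu}$ are products over $s\in\mu$ while $c_{\hat\lambda,\hat\mu}$ and $c'_{\hat\mu,\hat\lambda}$ are products over $s\in\hat\lambda$, and $|\mu|\ne|\hat\lambda|$ in general, so the matching is not box-by-box between $\mu$ and $\hat\lambda$ but emerges only after the telescoping over the whole chain has been carried out.
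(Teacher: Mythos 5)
Your reduction of the theorem to the per-tableau identity $c_{\mu,\lambda}\,c'_{\lambda,\mu}\,w_{\hat T}=c_{\hat\lambda,\hat\mu}\,c'_{\hat\mu,\hat\lambda}\,w_T$ via the rotation-and-label-reversal bijection is exactly the paper's Proposition~\ref{rotation_tableau}, and your final assembly (symmetry of $J_{\lambda/\mu}$ under reversal of the variables) is fine. The gap is the middle step, which you leave as ``bookkeeping'' expected to ``cancel in pairs except for a telescoping remainder''. That expectation is structurally wrong: a box of $\hat\mu^{(m)}=\widehat{\lambda^{(N-m)}}$ corresponds under the rotation of the rectangle $\beta$ to a box of $\beta\setminus\lambda^{(N-m)}$, so the products defining $w_{\hat T}$ run over sets of boxes \emph{complementary} in $\beta$ to those defining $w_T$; there is no box-to-box pairing available to cancel. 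You half-notice this in your last sentence ($|\mu|\neq|\hat\lambda|$ in general), but you do not supply the mechanism that replaces it. Already in the one-label (horizontal strip) case the paper must prove identities such as
\[
\prod_{q=\mu_{i_k}+1}^{\lambda_{i_k}}\frac{(\mu_{i_j}+1-q)\alpha+i_k-i_j-1}{(\lambda_{i_j}+1-q)\alpha+i_k-i_j-1}
=\prod_{p=\mu_{i_j}+1}^{\lambda_{i_j}}\frac{(p-\lambda_{i_k})\alpha+i_k-i_j-1}{(p-\mu_{i_k})\alpha+i_k-i_j-1}\,,
\]
in which the two sides have different numbers of factors and the equality holds only because of how the particular linear forms overlap after cross-cancellation; this is not a pairwise matching of boxes.

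Moreover, the paper does not establish the weight identity ``all at once over the chain'' as you propose: it inducts on the number of labels, splitting $T$ into a sub-tableau $S$ and a final horizontal strip $R$, and the induction step closes only by means of a separate, non-obvious identity (Lemma~\ref{rotation_induction}), namely $c_{\mu,\lambda}/c_{\mu,\beta}=c_{\hat\lambda,\hat\mu}/c_{\hat\lambda,\beta}$ together with its primed analogue, itself proved by an independent induction on adding single boxes. Nothing in your sketch identifies this auxiliary identity or a substitute for it, and without it the normalizing factors attached to the intermediate shape $\lambda^{(r-1)}$ do not recombine into the stated quotient. The conceptual frame (Stanley's formula, the tableau bijection, the symmetry argument) is right, but the part you defer is the entire mathematical content of the theorem, and the method you propose for it does not work as described.
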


As a consequence of the previous formula, thanks to \cite{KS} we get

\begin{corollary}\label{cor:rotation}
Let $\lambda/\mu$ be a skew partition with $\lambda$ rectangular. Let $\hat\mu$ be the rotated complement of $\mu$ in $\lambda$, then
\[J_{\lambda/\mu} =c_{\mu,\lambda} \,c'_{\lambda,\mu} \,J_{\hat\mu}.\]
In particular $\tilde v_{\lambda/\mu,\,\nu}(\alpha)$ is a polynomial with nonnegative integer coefficients, for all partitions $\nu$. Furthermore, we have 
\[
	g^\lambda_{\mu, \hat \mu} = c_{\mu,\lambda} \,c'_{\lambda,\mu} \, c_{\hat \mu} \, c'_{\hat \mu}  = c_{\hat \mu,\lambda} \,c'_{\lambda, \hat \mu} \, c_\mu \, c'_ \mu.
\]
\end{corollary}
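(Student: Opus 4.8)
The plan is to deduce Corollary~\ref{cor:rotation} from Theorem~\ref{rotation} in complete analogy with the way Corollary~\ref{cor:translation} is deduced from Theorem~\ref{translation}. First I would apply Theorem~\ref{rotation} in the special situation where $\lambda$ is itself the rectangular partition $\beta$ in which we take rotated complements. In that case $\hat\lambda = \widehat{\beta} = \emptyset$, so $\hat\mu/\hat\lambda = \hat\mu/\emptyset = \hat\mu$, and the factors $c_{\hat\lambda,\hat\mu}$, $c'_{\hat\mu,\hat\lambda}$ reduce to $c_{\emptyset,\hat\mu}=1$ and $c'_{\hat\mu,\emptyset}=1$ (since both are empty products over boxes of $\hat\lambda=\emptyset$). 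Hence Theorem~\ref{rotation} becomes exactly $c_{\mu,\lambda}\,c'_{\lambda,\mu}\,J_{\hat\mu} = J_{\lambda/\mu}$, which is the first displayed formula of the corollary.

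Next I would address the nonnegativity claim. Since $\hat\mu$ is an honest partition, $J_{\hat\mu}$ is an ordinary (non-skew) Jack symmetric function, so by the Knop--Sahi theorem \cite{KS} quoted in the introduction we have $J_{\hat\mu} = \sum_\nu v_{\hat\mu,\nu}(\alpha)\,m_\nu$ with each $v_{\hat\mu,\nu}(\alpha)$ a polynomial in $\alpha$ with nonnegative integer coefficients; moreover $v_{\hat\mu,\nu}$ is divisible by $u_\nu$ because the Knop--Sahi admissible fillings come in $u_\nu$-fold families obtained by permuting equal parts of $\nu$ (this is exactly the divisibility implicit in the remark after Proposition~\ref{leading-coefficient} and the definition of $\tilde v$). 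Now I need that $c_{\mu,\lambda}(\alpha)$ and $c'_{\lambda,\mu}(\alpha)$ are themselves polynomials in $\alpha$ with nonnegative integer coefficients: this is immediate from their definitions, $c_{\mu,\lambda,s} = a_{\mu,s}\alpha + \ell_{\lambda,s} + 1$ and $c'_{\lambda,\mu,s} = (a_{\lambda,s}+1)\alpha + \ell_{\mu,s}$, all of whose coefficients are nonnegative integers. Comparing the coefficient of $m_\nu$ on the two sides of $J_{\lambda/\mu} = c_{\mu,\lambda}\,c'_{\lambda,\mu}\,J_{\hat\mu}$ gives $v_{\lambda/\mu,\nu} = c_{\mu,\lambda}\,c'_{\lambda,\mu}\,v_{\hat\mu,\nu}$, hence $\tilde v_{\lambda/\mu,\nu} = c_{\mu,\lambda}\,c'_{\lambda,\mu}\,\tilde v_{\hat\mu,\nu}$, a product of three polynomials with nonnegative integer coefficients.

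Finally I would establish the formula for $g^\lambda_{\mu,\hat\mu}$. Recall from the introduction that $g^\lambda_{\mu,\nu} = \langle J_\lambda, J_\mu J_\nu\rangle = \langle J_{\lambda/\mu}, J_\nu\rangle$. Taking $\nu = \hat\mu$ and using $J_{\lambda/\mu} = c_{\mu,\lambda}\,c'_{\lambda,\mu}\,J_{\hat\mu}$ together with bilinearity of the scalar product yields $g^\lambda_{\mu,\hat\mu} = c_{\mu,\lambda}\,c'_{\lambda,\mu}\,\langle J_{\hat\mu},J_{\hat\mu}\rangle = c_{\mu,\lambda}\,c'_{\lambda,\mu}\,j_{\hat\mu}$, and then Theorem~\ref{norma} rewrites $j_{\hat\mu} = c_{\hat\mu}\,c'_{\hat\mu}$, giving the first expression. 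For the second expression I would invoke the symmetry $g^\lambda_{\mu,\hat\mu} = g^\lambda_{\hat\mu,\mu}$ (the Stanley $g$-polynomials are symmetric in their lower indices, since $J_\mu J_{\hat\mu}$ is) and apply the same identity with the roles of $\mu$ and $\hat\mu$ exchanged, noting that $\mu$ is the rotated complement of $\hat\mu$ in $\lambda$; this produces $g^\lambda_{\hat\mu,\mu} = c_{\hat\mu,\lambda}\,c'_{\lambda,\hat\mu}\,c_\mu\,c'_\mu$. The only point requiring a little care — and the step I would flag as the main obstacle — is verifying the hypothesis matching for Theorem~\ref{rotation}: one must check that when $\lambda$ is rectangular the rotated complement of $\emptyset$ in $\lambda$ is $\lambda$ itself and that $\hat\mu \subset \lambda = \widehat{\emptyset}$ holds, so that the theorem genuinely applies in both directions; once the indices are lined up correctly everything else is a direct substitution.
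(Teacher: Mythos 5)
Your proposal is correct and follows exactly the route the paper intends: the paper states Corollary~\ref{cor:rotation} as an immediate consequence of Theorem~\ref{rotation} (specialized to the ambient rectangle $\beta=\lambda$, where $\hat\lambda=\emptyset$ makes $c_{\hat\lambda,\hat\mu}=c'_{\hat\mu,\hat\lambda}=1$) together with the Knop--Sahi positivity theorem, which is precisely your argument. Your filled-in details --- the nonnegativity of the linear factors $c_{\mu,\lambda,s}$ and $c'_{\lambda,\mu,s}$, the $u_\nu$-divisibility via relabelling of admissible tableaux, and the computation $g^\lambda_{\mu,\hat\mu}=\langle J_{\lambda/\mu},J_{\hat\mu}\rangle=c_{\mu,\lambda}c'_{\lambda,\mu}j_{\hat\mu}$ with the symmetry $\mu\leftrightarrow\hat\mu$ --- are all sound.
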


\begin{remark}
The fact that for a rectangular partition $\lambda$ the functions $J_{\lambda/\mu}$ and $J_{\hat\mu}$ are proportional was known thanks to a result by T.W.~Cai and N.~ Jing \cite[Theorem 4.7]{CJ}. They proved that if $\lambda$ is rectangular and $\mu \subset \lambda$, then $g^\lambda_{\mu, \nu} \neq 0$ if and only if $\nu$ equals $\hat \mu$, the rotated complement of $\mu$ in $\lambda$, which is equivalent to say that $J_{\lambda/\mu}$ and $J_{\hat\mu}$ differ for the rational factor $g^\lambda_{\mu, \hat \mu} / j_{\hat \mu}$. They gave also a formula expressing $g^\lambda_{\mu, \hat \mu}$ as a product of linear factors, however the polinomiality of the rational factor it is not evident from their formula.
\end{remark}

Fix $\beta=(b^h)$ a rectangular partition, let $\mu\subset\lambda\subset \beta$ and let $\hat \lambda \subset \hat \mu \subset \beta$ be the respective rotated complements in $\beta$. If $T$ is a standard tableau of shape $\lambda/\mu$ with labels in $\{1, \ldots, r\}$, we denote by $\hat T$ the standard tableau of shape $\hat\mu/\hat\lambda$ arising from the same filling, after rotating the diagram and reversing the labels $x_{\hat i}=x_{r+1-i}$ for all $i=1,\ldots,r$.

Thanks to Theorem \ref{SThm6.3}, Theorem~\ref{rotation} is an immediate consequence of the following

\begin{proposition}\label{rotation_tableau}
In the previous notation, we have 
\[c_{\mu,\lambda}\, c'_{\lambda,\mu}\,w_{\hat T}
= c_{\hat\lambda,\hat\mu} \, c'_{\hat\mu,\hat\lambda} \, w_T\]
\end{proposition}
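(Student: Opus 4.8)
The plan is to mimic the structure of the proof of Proposition for translations, analyzing the contribution of each box of $\lambda$ to $w_T$ and comparing it with the contribution of the corresponding box of $\hat\mu$ to $w_{\hat T}$ under the rotation. Recall that under the rotated complement in $\beta=(b^h)$, the diagram of $\lambda/\mu$ is carried bijectively onto the diagram of $\hat\mu/\hat\lambda$ by the map $(i,j)\mapsto(h+1-i,\,b+1-j)$, and this bijection exchanges arms with legs: if $s=(i,j)$ lies in $\lambda$ and $\hat s$ is its image in $\hat\mu$, then $a_{\lambda,s}=\ell_{\hat\mu,\hat s}$ and $\ell_{\lambda,s}=a_{\hat\mu,\hat s}$ (and likewise, for boxes of $\mu$, $a_{\mu,s}=\ell_{\hat\lambda,\hat s}$, $\ell_{\mu,s}=a_{\hat\lambda,\hat s}$). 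The label reversal $x_{\hat\imath}=x_{r+1-i}$ converts a column filled top-to-bottom with increasing labels into the same column filled top-to-bottom with increasing labels after rotation, so $\hat T$ is indeed standard and the intermediate shapes satisfy $\widehat{\lambda^{(i)}}$ interpolating between $\hat\lambda$ and $\hat\mu$ (more precisely, if $\lambda^{(i)}$ is obtained from $T$ by keeping labels $\le i$, then the shape obtained from $\hat T$ by keeping labels $\le r-i$ is the rotated complement of $\lambda^{(i)}$ in $\beta$).

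First I would record the transformation of the basic factors $c$ and $c'$ under rotation. Since arm and leg swap and $\alpha$ is fixed, one does not get $c\leftrightarrow c'$ on the nose; instead there is an extra shift coming from the $+1$ in the definition of $c$. The cleanest bookkeeping is to use the identity $c'_\lambda(\alpha)=\alpha^{|\lambda|}c_{\lambda'}(\alpha^{-1})$ together with the fact that transposition and rotation-complement in a rectangle differ by an ordinary complement; but I expect it is more transparent to argue directly, box by box, exactly as in cases (1)--(3) of the translation proof. So: for a box $s\in\lambda\smallsetminus\mu$ whose row and column both meet $\lambda/\mu$, I would check that the $B/C$ factors attached to $s$ across the standard filling $T$ multiply to the $B/C$ factors attached to $\hat s$ across $\hat T$ (the vertical-strip structure is preserved and arm/leg swap is irrelevant inside a single column-factor computation, since only legs enter the relevant $c_{\lambda^{(i)},s}$). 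For a box $s\in\mu$ I would split, as before, according to whether the column of $s$ fails to meet $\lambda/\mu$ (so $\ell_{\lambda,s}=\ell_{\mu,s}$) or the row fails to meet it (so $a_{\lambda,s}=a_{\mu,s}$), and in each case telescope the product $\prod_i B_{\lambda^{(i)}/\lambda^{(i-1)},s}/C_{\lambda^{(i)}/\lambda^{(i-1)},s}$ to a single ratio $c'_{\lambda,s}/c'_{\mu,s}$ or $c_{\lambda,s}/c_{\mu,s}$, exactly reproducing the computations already written out in the translation proposition.

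The new input — and the main obstacle — is matching the prefactors $j_\mu$, $c_{\mu,\lambda}c'_{\lambda,\mu}$ on the one side with $j_{\hat\lambda}$, $c_{\hat\lambda,\hat\mu}c'_{\hat\mu,\hat\lambda}$ on the other. Here arm/leg swap genuinely bites: $c_{\mu,\lambda,s}=a_{\mu,s}\alpha+\ell_{\lambda,s}+1$ becomes, after rotation, $a_{\hat\lambda,\hat s}\alpha$-part versus $\ell_{\hat\mu,\hat s}$-part with the roles partly interchanged, so it is $c'_{\hat\mu,\hat\lambda,\hat s}$ (up to reconciling the $+1$) rather than $c_{\mu,\lambda}$ itself that appears. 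I would therefore prove the clean identities
\[
c_{\mu,\lambda}\,c'_{\lambda,\mu}=c_{\hat\lambda,\hat\mu}\,c'_{\hat\mu,\hat\lambda}\cdot\frac{j_{\hat\lambda}}{j_\mu}\cdot\frac{j_\mu}{j_{\hat\lambda}}
\]
— i.e.\ I would show separately that $j_\mu\,c_{\mu,\lambda}\,c'_{\lambda,\mu}$ and $j_{\hat\lambda}\,c_{\hat\lambda,\hat\mu}\,c'_{\hat\mu,\hat\lambda}$ are related by exactly the product of the box-by-box ratios computed in the previous paragraph. Concretely: after the box-by-box analysis, $w_{\hat T}/w_T$ equals $(j_{\hat\lambda}/j_\mu)$ times a product over boxes of $\mu$ of ratios of $c$'s and $c'$'s for $\lambda,\mu$ versus $\hat\lambda,\hat\mu$; using Theorem~\ref{norma} ($j_\nu=c_\nu c'_\nu$) and the arm/leg swap $c_{\hat\mu,\hat s}=c'_{\mu,s}$-type relations — valid because complementation in the full rectangle $\beta$ turns $c_{\hat\mu}$ into $c'_\mu$ up to the boxes not in $\mu$, which cancel — this telescopes to $c_{\hat\lambda,\hat\mu}c'_{\hat\mu,\hat\lambda}/(c_{\mu,\lambda}c'_{\lambda,\mu})$, giving the claim. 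I expect the only real care needed is in tracking how the $+1$ in $c$ versus its absence in $c'$ interacts with the arm/leg swap, which is precisely what makes the prefactor come out as the stated asymmetric product $c_{\mu,\lambda}c'_{\lambda,\mu}$ rather than something manifestly symmetric.
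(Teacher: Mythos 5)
Your proposal rests on a false premise: rotation by $\pi$ does \emph{not} exchange arms and legs --- that is what transposition does. For $s=(i,j)$ with image $\hat s=(h+1-i,\,b+1-j)$ one computes $a_{\hat\mu,\hat s}=j-1-\mu_i$ and $\ell_{\hat\mu,\hat s}=i-1-\mu'_j$, so rotation exchanges arms with \emph{co-arms} and legs with \emph{co-legs}; already for $\beta=\lambda=(3,3)$, $\mu=\emptyset$, $s=(1,1)$ one has $a_{\lambda,s}=2$ while $\ell_{\hat\mu,\hat s}=0$. Consequently the box-by-box matching you invoke fails: even in the horizontal-strip case a box $s\in\lambda/\mu$ contributes $c_{\lambda,s}=(\lambda_{i}-q)\alpha+1$ on one side and its image contributes $c_{\hat\mu,\hat s}=(q-1-\mu_{i})\alpha+1$ on the other; these agree only after multiplying over an entire row (the boxes are matched in reverse order). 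Worse, the boxes of $\mu$ whose row \emph{and} column both meet $\lambda/\mu$ produce, on the two sides, products of linear factors indexed by \emph{different} rows, and their equality is a genuine identity that must be proved by cancelling overlapping ranges of factors (this is Case~4 of Lemma~\ref{rotation_horizontal_strip}); nothing in your sketch addresses it. Finally, your displayed ``clean identity'' reduces, after the $j$'s cancel, to $c_{\mu,\lambda}c'_{\lambda,\mu}=c_{\hat\lambda,\hat\mu}c'_{\hat\mu,\hat\lambda}$, which is false: for $\beta=\lambda=(2,2)$, $\mu=(1)$ the left side is $4\alpha$ and the right side is $1$.

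There is also a structural obstruction to the single-pass, translation-style argument you propose for a general standard tableau. As you yourself note, the label reversal sends $\lambda^{(i)}$ to the rotated complement of $\lambda^{(r-i)}$, so the chain of intermediate shapes is traversed in the opposite order; the products $\prod_i B_{\lambda^{(i)}/\lambda^{(i-1)},s}/C_{\lambda^{(i)}/\lambda^{(i-1)},s}$ attached to a fixed box therefore do not telescope against their rotated counterparts as they do in the translation case. The paper circumvents this by induction on the number of labels: the base case is the horizontal strip (Lemma~\ref{rotation_horizontal_strip}), and the induction step peels off the last strip and recombines the prefactors using the separate identity $c_{\mu,\lambda}/c_{\mu,\beta}=c_{\hat\lambda,\hat\mu}/c_{\hat\lambda,\beta}$ of Lemma~\ref{rotation_induction}, itself proved by an induction on adding boxes. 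Your proposal supplies no substitute for either of these two computations; the phrase ``this telescopes'' stands in for precisely the part of the argument that has content.
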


We split the proof of Proposition~\ref{rotation_tableau} in a few lemmas. The proof will be by induction on the number $r$ of labels. The base step $r=1$, treated in the following lemma, is the case of the horizontal strips with all boxes filled with the same label.

\begin{lemma}\label{rotation_horizontal_strip}
Let $\lambda/\mu$ be a horizontal strip, then
\[\frac{c_\mu \, c'_\mu \, B_{\lambda/\mu}}{c_{\mu,\lambda} \, c'_{\lambda,\mu} \, C_{\lambda/\mu}}=
\frac{c_{\hat\lambda} \, c'_{\hat\lambda} \, B_{\hat\mu/\hat\lambda}}{c_{\hat\lambda,\hat\mu} \, c'_{\hat\mu,\hat\lambda} \, C_{\hat\mu/\hat\lambda}} \]
\end{lemma}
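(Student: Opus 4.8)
The plan is to verify the identity directly by computing both sides as products of linear factors in $\alpha$, box by box, and matching the contributions. First I would set up coordinates: fix the rectangle $\beta = (b^h)$, write $\lambda/\mu$ as a horizontal strip, and record for each box $s = (i,j)$ its arm and leg in $\lambda$ and in $\mu$, together with the information of whether the strip $\lambda/\mu$ has a box in column $j$. Under the rotation by $\pi$, a box $s = (i,j) \in \mu$ corresponds to a box $\hat s = (h+1-i, b+1-j) \in \hat\lambda$, and the key geometric observations are: $a_{\mu,s} = \ell_{\hat\lambda, \hat s}$ type relations exchanging arms and legs, complements with respect to $\beta$, and the fact that ``column $j$ meets $\lambda/\mu$'' translates to ``column $b+1-j$ meets $\hat\mu/\hat\lambda$''. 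I would tabulate these correspondences once and for all.

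Next I would expand each of the four products $c_\mu, c'_\mu, c_{\mu,\lambda}, c'_{\lambda,\mu}$ (and their hatted counterparts) and the strip factors $B_{\lambda/\mu}, C_{\lambda/\mu}$ into products over boxes of explicit binomials $a\alpha + \ell + \text{const}$. The cleanest way to organize the cancellation is to partition the boxes of $\lambda$ into three classes exactly as in the translation proof: (i) boxes $s \in \mu$ whose row and column both meet the strip, (ii) boxes $s \in \mu$ whose column does not meet the strip, (iii) boxes $s \in \mu$ whose row does not meet the strip, plus the boxes in $\lambda/\mu$ itself. For each class I would check that the local contribution to the left-hand ratio equals the local contribution of the corresponding box class to the right-hand ratio, using the arm-leg swap induced by rotation. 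For class (i) the matching is essentially the ``up to translation'' identity already established; for classes (ii) and (iii) one uses that $c_{\mu,\lambda,s}c'_{\lambda,\mu,s}$ collapses to $c_{\mu,s}c'_{\mu,s}$ times a single $B/C$ factor, as was computed in the proof of the translation proposition.

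The main obstacle I expect is bookkeeping the off-by-one constants correctly under rotation, since $c_{\lambda,s}$ uses $\ell + 1$ while $c'_{\lambda,s}$ uses $\ell$, and the rotation swaps the roles of $c$-type and $c'$-type factors as well as the roles of $\mu$ and $\hat\lambda$; one must also carefully track how ``the strip has a box in column $j$'' flips to the hatted side, because $B$ and $C$ switch between $c$ and $c'$ depending on that condition. A useful sanity check, which I would use to reduce the risk of sign/constant errors, is the transposition-rotation identities already recorded in the paper, namely $c'_\lambda(\alpha) = \alpha^{|\lambda|} c_{\lambda'}(\alpha^{-1})$ and $c'_{\lambda,\mu}(\alpha) = \alpha^{|\mu|} c_{\mu',\lambda'}(\alpha^{-1})$: applying $\alpha \mapsto \alpha^{-1}$ together with transposition should exchange the two sides of the asserted equality up to a power of $\alpha$ that one checks cancels, giving an independent confirmation that the binomials have been transcribed correctly. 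Once the three-class matching is verified and the constants checked, the lemma follows by taking the product over all boxes.
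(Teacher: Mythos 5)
Your overall plan (expand everything into linear factors and sort the boxes by whether their row and column meet the strip) is the same skeleton as the paper's proof, and your classes (ii), (iii) and the boxes of $\lambda/\mu$ itself are handled correctly: there the contributions collapse exactly as you describe. But the proposal breaks down on the one case that carries all the content of the lemma, namely the boxes $s\in\mu$ whose row \emph{and} column both meet the strip. Two concrete problems. First, your basic correspondence is wrong: if $s=(i,j)\in\mu$, the rotated box $(h+1-i,\,b+1-j)$ lies in $\hat\lambda$ if and only if $(i,j)\notin\lambda$; since $\mu\subset\lambda$, the rotation never sends a box of $\mu$ to a box of $\hat\lambda$. (The rotation matches $\lambda/\mu$ with $\hat\mu/\hat\lambda$, and $\mu$ with $\beta\setminus\hat\mu$; in general $|\mu|\neq|\hat\lambda|$.) So there is no box-by-box matching between the left-hand product, indexed by $\mu$, and the right-hand product, indexed by $\hat\lambda$. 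Second, and consequently, this case is \emph{not} ``essentially the translation identity'': in the translation proposition the relevant boxes literally correspond with equal arms and legs, whereas here, for each pair of nonempty strip rows $i_j<i_k$, one must compare a product of $\lambda_{i_k}-\mu_{i_k}$ linear factors (coming from boxes of $\mu$ in row $i_j$) with a product of $\lambda_{i_j}-\mu_{i_j}$ linear factors (coming from boxes of $\hat\lambda$ in row $h+1-i_k$) --- products with \emph{different numbers of factors} in general. The paper proves their equality by observing that, after reindexing, one product divides the other and the residual quotient of $d=(\lambda_{i_k}-\mu_{i_k})-(\lambda_{i_j}-\mu_{i_j})$ factors telescopes to $1$. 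That identity is the heart of the lemma and is entirely absent from your argument; no local matching of contributions can produce it.

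A smaller point: the consistency check via $c'_\lambda(\alpha)=\alpha^{|\lambda|}c_{\lambda'}(\alpha^{-1})$ is of limited use here, because transposition turns the horizontal strip $\lambda/\mu$ into a vertical strip, for which the factors $B_{\lambda/\mu}$ and $C_{\lambda/\mu}$ as defined in the paper (they depend on the horizontal-strip condition ``the strip has a box in the same column as $s$'') do not transform in the simple way you would need. To repair the proof, keep your case analysis but replace the claimed box-to-box matching in the hard case by the pairwise-row product identity and the divisibility argument sketched above.
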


\begin{proof}
Let us denote by $i_1<\ldots<i_\ell$ the indices of the nonempty rows of $\lambda/\mu$, we must have for all $j=1,\ldots,\ell-1$
\[\lambda_{i_j}>\mu_{i_j}\geq\lambda_{i_{j+1}}>\mu_{i_{j+1}}\ .\]
We analyze separately the contribution of the boxes $s \in \lambda$ in the expressions appearing in the statement, respectively written as products of factors indexed by $s \in \lambda$ on the left hand side, and by $s \in \hat \mu$ on the right hand side.

\textit{Case 1.} Let $s$ be a box in $\lambda/\mu$, then $s$ gives a contribution $c_{\lambda,\,s}$ on the left hand side. Similarly, the corresponding box in $\hat \mu/\hat \lambda$ gives a contribution $c_{\hat \mu,\,s}$ on the right hand side.
Thus the product of the factors on the left hand side coming from the boxes in the $i_j$-th row of $\lambda/\mu$ gives
\[(1)(\alpha+1)(2\alpha+1)\cdots((\lambda_{i_j}-\mu_{i_j}-1)\alpha+1), \]
and the same contribution arises on the right hand side from the boxes in the corresponding row of $\hat\mu/\hat\lambda$.\\

%\item 
%Let $s$ be a box in $\lambda/\mu$, then 
%\[\frac{c_{\mu,\,s}\, c'_{\mu,\,s}\, B_{\lambda/\mu,\,s}}{c_{\mu,\lambda,\,s}\, c'_{\lambda,\mu,\,s}\, C_{\lambda/\mu,\,s}}=c_{\lambda,\,s}\]
%hence the product of factors coming from boxes in the $i_j$-th row of $\lambda/\mu$ gives 
%\[(1)(\alpha+1)(2\alpha+1)\cdots((\lambda_{i_j}-\mu_{i_j}-1)\alpha+1)\ .\]
%Similarly, the same arises from the corresponding boxes of $\hat\mu/\hat\lambda$.
 
\textit{Case 2.}
Suppose that $s\in\mu$ lies in a column which does not meet $\lambda/\mu$: then
\begin{align*}
c_{\mu,\lambda,\,s}&=c_{\mu,\,s}\\
B_{\lambda/\mu,\,s}&= c'_{\lambda,\,s}=c'_{\lambda,\mu,\,s}\\
C_{\lambda/\mu,\,s}&=c'_{\mu,\,s}
\end{align*}
and the contribution of $s$ to the left hand side equals 1. The same holds on the right hand side for the boxes in $\hat\lambda$ whose column does not meet $\hat\mu/\hat\lambda$.\\

\textit{Case 3.}
Suppose that $s\in\mu$ lies in a row which does not meet $\lambda/\mu$: then 
\begin{align*}
c'_{\lambda,\mu\,s}&=c'_{\mu,\,s}\\
B_{\lambda/\mu,\,s}&= c_{\lambda,\,s}=c_{\mu,\lambda,\,s}\\
C_{\lambda/\mu,\,s}&=c_{\mu,\,s}
\end{align*}
and  the contribution of $s$ to the left hand side equals 1. The same holds on the right hand side for the boxes in $\hat\lambda$ whose row does not meet $\hat\mu/\hat\lambda$.\\

\textit{Case 4.}
We are left with the boxes $s \in \mu$ (resp. $s \in \hat \lambda$) such that both the column and the row of $s$ meet $\lambda/\mu$ (resp. $s \in \hat \mu/ \hat \lambda$).

By definition, if $s \in \mu$ (resp. $s \in \hat \lambda$) is a box as above, then $B_{\lambda/\mu,s} = c_{\lambda,s}$ and $C_{\lambda/\mu,s} = c_{\mu,s}$ (resp. $B_{\hat \mu/\hat\lambda,s} = c_{\hat\mu,s}$ and $C_{\hat \mu/\hat\lambda,s} = c_{\hat\lambda,s}$). Thus the claim follows if we show that, for all $j,k$ with $1\leq j<k\leq \ell$, the next equality holds:
\[
\prod_{q=\mu_{i_{k}}+1}^{\lambda_{i_{k}}}  \frac{c'_{\mu,(i_j,q)}\, c_{\lambda,(i_j,q)}} {c_{\mu,\lambda,(i_j,q)}\, c'_{\lambda,\mu,(i_j,q)}}  \; = \;
\prod_{q=\hat\lambda_{h+1-i_{j}}+1}^{\hat\mu_{h+1-i_{j}}}\frac{c'_{\hat\lambda,(h+1-i_{k},q)}\, c_{\hat\mu,(h+1-i_{k},q)}}{c_{\hat\lambda,\hat\mu,(h+1-i_{k},q)}\, c'_{\hat\mu,\hat\lambda,(h+1-i_{k},q)}} \,.
\]
In the first term we have
\[\prod_{q=\mu_{i_{k}}+1}^{\lambda_{i_{k}}}\frac{c'_{\mu,(i_j,q)}}{c'_{\lambda,\mu,(i_j,q)}}
=\prod_{q=\mu_{i_{k}}+1}^{\lambda_{i_{k}}}\frac{(\mu_{i_j}+1-q)\alpha+i_{k}-i_{j}-1}{(\lambda_{i_j}+1-q)\alpha+i_{k}-i_{j}-1}\,,\]
\[\prod_{q=\mu_{i_{k}}+1}^{\lambda_{i_{k}}}\frac{c_{\lambda,(i_j,q)}}{c_{\mu,\lambda,(i_j,q)}}
=\prod_{q=\mu_{i_{k}}+1}^{\lambda_{i_{k}}}\frac{(\lambda_{i_j}-q)\alpha+i_{k}-i_{j}+1}{(\mu_{i_j}-q)\alpha+i_{k}-i_{j}+1} \,.\]
In the second term we have
\[\prod_{q=\hat\lambda_{h+1-i_{j}}+1}^{\hat\mu_{h+1-i_{j}}}\frac{c'_{\hat\lambda,(h+1-i_{k},q)}}{c'_{\hat\mu,\hat\lambda,(h+1-i_{k},q)}}
=\prod_{p=\mu_{i_{j}}+1}^{\lambda_{i_{j}}}\frac{(p-\lambda_{i_{k}})\alpha+i_{k}-i_{j}-1}{(p-\mu_{i_{k}})\alpha+i_{k}-i_{j}-1}\,,\]
\[\prod_{q=\hat\lambda_{h+1-i_{j}}+1}^{\hat\mu_{h+1-i_{j}}}\frac{c_{\hat\mu,(h+1-i_{k},q)}}{c_{\hat\lambda,\hat\mu,(h+1-i_{k},q)}}
=\prod_{p=\mu_{i_{j}}+1}^{\lambda_{i_{j}}}\frac{(p-\mu_{i_{k}}-1)\alpha+i_{k}-i_{j}+1}{(p-\lambda_{i_{k}}-1)\alpha+i_{k}-i_{j}+1} \,.\]

To compare the terms of the stated equality, we compare the above factors separately. By symmetry we can assume that the difference $d := (\lambda_{i_{k}}-\mu_{i_{k}})-(\lambda_{i_j}-\mu_{i_j})$ is nonnegative.

First we prove the equality
\[\prod_{q=\mu_{i_{k}}+1}^{\lambda_{i_{k}}}\frac{(\mu_{i_j}+1-q)\alpha+i_{k}-i_{j}-1}{(\lambda_{i_j}+1-q)\alpha+i_{k}-i_{j}-1}
=\prod_{p=\mu_{i_{j}}+1}^{\lambda_{i_{j}}}\frac{(p-\lambda_{i_{k}})\alpha+i_{k}-i_{j}-1}{(p-\mu_{i_{k}})\alpha+i_{k}-i_{j}-1}\,.\]
By the assumption on $d$, notice that the numerator of the right hand side divides that of the left hand side, and similarly for the denominators. Thus dividing the first hand side by the second hand side we get
\[\frac{\prod_{q=\mu_{i_{k}}+1}^{\mu_{i_{k}}+d}(\mu_{i_j}+1-q)\alpha+i_{k}-i_{j}-1}{\prod_{q=\lambda_{i_{k}}-d+1}^{\lambda_{i_{k}}}(\lambda_{i_j}+1-q)\alpha+i_{k}-i_{j}-1}
=1 \,,\]
which shows the equality.

%Similarly, to see that
%\[ \prod_{q=\mu_{i_{k}}+1}^{\lambda_{i_{k}}}\frac{(\lambda_{i_j}-q)\alpha+i_{k}-i_{j}+1}{(\mu_{i_j}-q)\alpha+i_{k}-i_{j}+1}
%=\prod_{p=\mu_{i_{j}}+1}^{\lambda_{i_{j}}}\frac{(p-\mu_{i_{k}}-1)\alpha+i_{k}-i_{j}+1}{(p-\lambda_{i_{k}}-1)\alpha+i_{k}-i_{j}+1}\]
%notice that the numerator (and the denominator) of the right hand side divides the numerator (and the denominator, respectively) of the left hand side, and
%\[ \frac{\prod_{q=\lambda_{i_{k}}-d+1}^{\lambda_{i_{k}}}(\lambda_{i_j}-q)\alpha+i_{k}-i_{j}+1}
%{\prod_{q=\mu_{i_{k}}+1}^{\mu_{i_{k}+d}}(\mu_{i_j}-q)\alpha+i_{k}-i_{j}+1}
%=1\ .\]
%The claim follows.

Arguing in a similar way, it is proved also the second equality
\[ \prod_{q=\mu_{i_{k}}+1}^{\lambda_{i_{k}}}\frac{(\lambda_{i_j}-q)\alpha+i_{k}-i_{j}+1}{(\mu_{i_j}-q)\alpha+i_{k}-i_{j}+1}
=\prod_{p=\mu_{i_{j}}+1}^{\lambda_{i_{j}}}\frac{(p-\mu_{i_{k}}-1)\alpha+i_{k}-i_{j}+1}{(p-\lambda_{i_{k}}-1)\alpha+i_{k}-i_{j}+1}	 \,. \qedhere
\]
\end{proof}

The induction step in the proof of Proposition~\ref{rotation_tableau} will follow from 

\begin{lemma} \label{rotation_induction}
Let $\beta=(b^h)$ be a rectangular partition. Let $\mu\subset\lambda\subset \beta$ and let $\hat \lambda \subset \hat \mu \subset \beta$ be the respective rotated complements in $\beta$, then
\[\frac{c_{\mu,\lambda}}{c_{\mu,\beta}}=\frac{c_{\hat\lambda,\hat\mu}}{c_{\hat\lambda,\beta}} 
\quad\text{and}\quad
\frac{c'_{\lambda,\mu}}{c'_{\beta,\mu}}=\frac{c'_{\hat\mu,\hat\lambda}}{c'_{\beta,\hat\lambda}} \]
\end{lemma}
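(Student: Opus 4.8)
The plan is to reduce the two formulas to a single combinatorial identity attached to the pair $\lambda\subseteq\beta$, and then to read that identity off a lattice path. First I would dispose of the second formula by conjugating. Using $c'_{\nu,\xi}(\alpha)=\alpha^{|\xi|}c_{\xi',\nu'}(\alpha^{-1})$ (the relation recalled right after the definition of $c'_{\lambda,\mu}$, and its analogue with $\beta$ in place of $\lambda$), together with the elementary fact that taking rotated complements commutes with conjugation — i.e.\ $(\hat\mu)'$ is the rotated complement $\widehat{\mu'}$ of $\mu'$ inside the rectangle $\beta'=(h^b)$, and likewise for $\hat\lambda$ — one gets
\[
\frac{c'_{\lambda,\mu}}{c'_{\beta,\mu}}=\frac{c_{\mu',\lambda'}(\alpha^{-1})}{c_{\mu',\beta'}(\alpha^{-1})},\qquad
\frac{c'_{\hat\mu,\hat\lambda}}{c'_{\beta,\hat\lambda}}=\frac{c_{(\hat\lambda)',(\hat\mu)'}(\alpha^{-1})}{c_{(\hat\lambda)',\beta'}(\alpha^{-1})},
\]
the powers $\alpha^{|\mu|}$, resp.\ $\alpha^{|\hat\lambda|}$, cancelling between numerator and denominator. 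Since $(\hat\lambda)',(\hat\mu)'$ are the rotated complements of $\lambda',\mu'$ in $\beta'$, the second formula is exactly the first applied to $\mu'\subseteq\lambda'\subseteq\beta'$ with parameter $\alpha^{-1}$. So it suffices to prove $c_{\mu,\lambda}/c_{\mu,\beta}=c_{\hat\lambda,\hat\mu}/c_{\hat\lambda,\beta}$.

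Next I would unfold both sides as products over boxes. Since $\beta=(b^h)$, every box $(i,j)$ of $\beta$ has $\ell_{\beta,(i,j)}=h-i$, while for $(i,j)\in\mu$ one has $\ell_{\lambda,(i,j)}=\lambda'_j-i$; hence
\[
\frac{c_{\mu,\lambda}}{c_{\mu,\beta}}=\prod_{(i,j)\in\mu}\frac{(\mu_i-j)\alpha+\lambda'_j-i+1}{(\mu_i-j)\alpha+h-i+1}.
\]
Using the $180^\circ$ rotation bijection $(k,l)\in\hat\lambda\leftrightarrow(i,j)=(h{+}1{-}k,\,b{+}1{-}l)\in\beta\setminus\lambda$, under which the arm of $(k,l)$ in $\hat\lambda$ equals $j-\lambda_i-1$, $\ell_{\beta,(k,l)}=i-1$, and $\ell_{\hat\mu,(k,l)}=i-1-\mu'_j$, one similarly gets
\[
\frac{c_{\hat\lambda,\hat\mu}}{c_{\hat\lambda,\beta}}=\prod_{(i,j)\in\beta\setminus\lambda}\frac{(j-\lambda_i-1)\alpha+i-\mu'_j}{(j-\lambda_i-1)\alpha+i}.
\]
Both products factor according to the value $a\ge 0$ of the coefficient of $\alpha$ in each linear factor — the arm length $\mu_i-j$ on the left, its rotated counterpart $j-\lambda_i-1$ on the right — so it is enough to check that the two partial products with a fixed $a$ coincide. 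On the left these come from the rows $1\le i\le\mu'_{a+1}$, one box per row in column $\mu_i-a$; on the right, from the rows $\lambda'_{b-a}<i\le h$, one box per row in column $\lambda_i+a+1$. Writing $x=a\alpha$ and clearing denominators, the equality to prove is an identity of monic polynomials in $x$ of equal degree, hence an equality of multisets of roots.

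Now I would adjoin the ``missing'' linear factors $x+h-i+1$ for $\mu'_{a+1}<i\le h$ and $x+i$ for $1\le i\le\lambda'_{b-a}$: under the conventions $\lambda'_t=h$ for $t\le 0$ and $\mu'_t=0$ for $t>b$ these simply complete patterns already present, and after this padding the desired multiset identity becomes
\[
\bigl\{\,i-1-\lambda'_{\mu_i-a}\ :\ 1\le i\le h\,\bigr\}=\bigl\{\,\mu'_{\lambda_i+a+1}-i\ :\ 1\le i\le h\,\bigr\}.
\]
Since $\lambda'_{\mu_i-a}=\#\{1\le k\le h:\lambda_k+a+1>\mu_i\}$ and $\mu'_{\lambda_i+a+1}=\#\{1\le k\le h:\mu_k\ge\lambda_i+a+1\}$, I would interleave the two weakly decreasing length-$h$ sequences $(\mu_i)_i$ and $(\lambda_i+a+1)_i$ into a single $\pm1$ lattice path from $0$ to $0$: list the $2h$ values in weakly decreasing order (breaking ties by letting a $\mu$-value precede a $\lambda$-value of equal size), an up-step for each $\mu_i$ and a down-step for each $\lambda_i+a+1$. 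A direct bookkeeping then identifies the left multiset above with the multiset of heights of the path immediately \emph{before} its up-steps, and the right multiset with the heights immediately \emph{after} its down-steps. For each integer $\ell$, the value $\ell$ occurs in the first multiset with multiplicity equal to the number of up-crossings of the level $\ell+\tfrac12$, and in the second with multiplicity equal to the number of down-crossings of that level; since the path starts and ends at height $0$ these numbers agree, so the two multisets coincide, as required.

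The genuinely nontrivial point is this last step: recognising that after splitting by arm length, clearing denominators, and padding with the two families of trivial factors, the comparison of roots collapses to the lattice-path statement. The hypotheses $\mu\subseteq\lambda\subseteq\beta$ are used throughout the first two steps (to make sense of $\hat\lambda\subseteq\hat\mu$, of the box bijection, and of the leg formulas), whereas the lattice-path identity itself is then purely formal. Everything apart from that identification is routine manipulation of products of linear factors.
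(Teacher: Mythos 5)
Your argument is correct, but it takes a genuinely different route from the paper's. The paper shares your first step — the two identities are exchanged by conjugation via $c'_{\nu,\xi}(\alpha)=\alpha^{|\xi|}c_{\xi',\nu'}(\alpha^{-1})$ and the compatibility of rotated complements with transposition — but it then proves the first identity by induction on the sizes of $\mu$ and $\hat\lambda$: starting from the trivial case $\mu=\emptyset$, $\lambda=\beta$, it adds one box $(i,j)$ of $\lambda\setminus\mu$ to $\mu$ at a time (removing a box from $\lambda$ being symmetric under rotation) and verifies that both sides of the identity change by the same explicitly computed ratio, via a telescoping cancellation along the $i$-th row of $\mu$ and the $(b+1-j)$-th column of $\hat\lambda$. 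You instead prove the identity in closed form: you expand both sides as products over the boxes of $\mu$ and of $\beta\setminus\lambda$, group the factors by arm length $a$, pad with trivial factors, and reduce each group to an equality of multisets of integers, which you establish by the up-crossing/down-crossing count for the closed lattice path built from the interleaved sequences $(\mu_i)$ and $(\lambda_i+a+1)$. I checked the box bijection $(k,l)\leftrightarrow(h+1-k,b+1-l)$, the leg computation $\ell_{\hat\mu,(k,l)}=i-1-\mu'_j$, the identification of the two multisets with heights before up-steps and after down-steps under your tie-breaking convention, and the crossing-conservation argument; all are sound. The paper's induction keeps every step local at the cost of fairly heavy bookkeeping of which factors survive cancellation; your global argument isolates a clean combinatorial reason for the identity and exhibits both sides directly as matching products of linear factors organized by arm length, which is a reusable piece of structure.
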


\begin{proof}
Notice that the two identities are equivalent, up to transposing the partitions. So it is enough to prove the first one.

We proceed by induction on the size of the partitions $\mu$ and $\hat\lambda$.
If $\mu=\emptyset$ and $\lambda=\beta$ the identity is obvious.

Assume the identity holds for $\mu\subset\lambda\subset\beta$. Provided $|\lambda|>|\mu|$, we have to prove that the identity holds for the partitions obtained by adding a box of $\lambda \setminus \mu$ to $\mu$, or by removing a box of $\lambda \setminus \mu$ from $\lambda$ (that is, adding a box of $\hat \mu \setminus \hat \lambda$ to $\hat \lambda$). Notice that the identity is symmetric up to taking the rotated complement in $\beta$, so it is enough to consider the first case.

Let $(i,j) \in \lambda \setminus \mu$ and let $\mu_+ \subset \lambda$ be the partition obtained by adding the box $(i,j)$ to $\mu$. We claim that 
\[\frac{c_{\mu_+,\lambda}}{c_{\mu,\lambda}}=\frac{c_{\hat\lambda,\hat\mu_+}\, c_{\mu_+,\beta}}{c_{\hat\lambda,\hat\mu}\, c_{\mu,\beta}}\,,\]
which implies the statement thanks to the induction hypothesis.

Notice that by construction we have $\mu_{i-1}\geq\mu_i+1=j$ and $\hat\lambda_k\geq b+1-j>\hat\lambda_{k+1}$ for some $k\leq h-i$ (see Figure~\ref{addition_example} for an example).

\begin{figure}\caption{}\label{addition_example}
\begin{picture}(135,130)\put(0,0){\line(1,0){135}}\put(135,0){\line(0,1){110}}
\put(0,0){$\young(\bs\bs\bs\bs\bs\bs\bs\bs\bs\bs\ \ ,\bs\bs\bs\bs\bs\bs\bs\bs\bs\ \ \ ,\bs\bs\bs\bs\bs\bs\bs\ast \ \ \  ,\bs\bs\bs\bs\bs\bs\bs\ \ ,\bs\bs\bs\bs\ \ \ \ \ ,\bs\ \ \ \ \   ,\ \ \ \ \ \   ,\ \ \ \   ,\ \ \ \   ,\        )$}
\put(-10,80){$i$}\put(82,118){$j$}
\put(-30,46){$h\!\!+\!\!1\!\!-\!\!k$}
\end{picture}
\end{figure}

On the left hand side, all the factors of the numerator and of the denominator cancel out except for those coming from the boxes in the $i$-th row of $\mu$ and $\mu_+$. Setting $\hat\lambda_0=b$, it follows that the left hand side equals
\[\frac{\prod_{\ell=0}^{k-1}\big(
\prod_{p=j-(b-\hat\lambda_{\ell+1})}
^{j-(b+1-\hat\lambda_\ell)}(p\,\alpha+h+1-i-\ell)\big)\,
\prod_{p=0}^{j-(b+1-\hat\lambda_k)}
(p\,\alpha+h+1-i-k)}
{\prod_{\ell=0}^{k-1}\big(\prod_{p=
(j-1)-(b-\hat\lambda_{\ell+1})}^{(j-1)-(b+1-\hat\lambda_\ell)}(p\,\alpha+h+1-i-\ell)\big) \,
\prod_{p=0}^{(j-1)-(b+1-\hat\lambda_k)}(p\,\alpha+h+1-i-k)}\,,\]
%\[\frac{\prod_{\ell=0}^{k-1}\big(
%\prod_{p=\hat\lambda_{\ell+1}-b+j}^{\hat\lambda_\ell-b-1+j}(p\,\alpha+h+1-i-\ell)\big)
%\prod_{p=0}^{\hat\lambda_k-b-1+j}(p\,\alpha+h+1-i-k)}
%{\prod_{\ell=0}^{k-1}\big(\prod_{p=\hat\lambda_{\ell+1}-b-1+j}^{\hat\lambda_\ell-b-2+j}(p\,\alpha+h+1-i-\ell)\big)\prod_{p=0}^{\hat\lambda_k-b-2+j}(p\,\alpha+h+1-i-k)}\]
namely
\[\frac{\prod_{\ell=0}^{k}((\hat\lambda_\ell-b-1+j)\alpha+h+1-i-\ell)}
{\prod_{\ell=1}^{k}((\hat\lambda_{\ell}-b-1 +j)\alpha+h+2-i-\ell)}\ .\]
\\

On the right hand side, all the factors of $c_{\hat\lambda,\hat\mu_+}$ and of $c_{\hat\lambda,\hat\mu}$ cancel out except for those coming from the boxes in the $(b+1-j)$-th column of $\hat\lambda$: therefore
\[\frac{c_{\hat\lambda,\hat\mu_+}}{c_{\hat\lambda,\hat\mu}}=\frac{\prod_{\ell=1}^k((\hat\lambda_\ell-b-1+j)\alpha+h+1-i-\ell)}{\prod_{\ell=1}^k((\hat\lambda_\ell-b-1+j)\alpha+h+2-i-\ell)}\ .\]
The factors of $c_{\mu_+,\beta}$ and $c_{\mu,\beta}$ also cancel out, except for those coming from the $i$-th row of $\mu$ and $\mu^+$. Thus we get
\[\frac{c_{\mu_+,\beta}}{c_{\mu,\beta}}=\frac{\prod_{p=0}^{j-1}(p\,\alpha+h+1-i)}{\prod_{p=0}^{j-2}(p\,\alpha+h+1-i)}=(j-1)\alpha+h+1-i\,\]
and the claim follows.
\end{proof}

%\begin{lemma}\label{rotation_induction2}
%Let $\nu\subset\mu\subset\lambda\subset\beta=(b^h)$. Then
%\[\frac{c_{\nu,\mu} \, c_{\mu,\lambda}}{c_{\nu,\lambda} \, c_{\mu}}=
%\frac{c_{\hat\lambda,\hat\mu} \, c_{\hat\mu,\hat\nu}}{c_{\hat\lambda,\hat\nu} \, c_{\hat\mu}}
%\quad\text{and}\quad
%\frac{c'_{\mu,\nu} \, c'_{\lambda,\mu}}{c'_{\lambda,\nu} \, c'_{\mu}}=
%\frac{c'_{\hat\mu,\hat\lambda} \, c'_{\hat\nu,\hat\mu}}{c'_{\hat\nu,\hat\lambda} \, c'_{\hat\mu}}\]
%\end{lemma}

\begin{proof}[Proof of Proposition~\ref{rotation_tableau}] 
We proceed by induction on $r$, the number of indeterminates. The base step is Lemma~\ref{rotation_horizontal_strip}.

Suppose now that $r > 1$ and let $T$ be a standard tableau of shape $\lambda/\mu$ with $r$ labels: thus we have an increasing sequence of partitions
\[\mu=\lambda^{(0)} \subset \lambda^{(1)} \subset \ldots \subset \lambda^{(r-1)} \subset \lambda^{(r)} =\lambda\]
such that $\lambda^{(i)}/\lambda^{(i-1)}$ is a horizontal strip for all $i=1,\ldots,r$. Let us split the tableau $T$ in two standard tableaux, the tableau $S$ of shape $\lambda^{(r-1)}/\mu$ and the horizontal strip $R$ of shape $\lambda/\lambda^{(r-1)}$. Set $\lambda_-=\lambda^{(r-1)}$ and notice that 
\[\frac{w_T}{c_\mu \, c'_\mu}=\frac{w_S}{c_\mu \, c'_\mu}\, \frac{w_R}{c_{\lambda_-} \, c'_{\lambda_-}}\ .\]

Assume by the induction that the statement of the proposition holds for $S$. Then using Lemma~\ref{rotation_horizontal_strip} and writing $w_{\hat T}$ in terms of $w_{\hat R}$ and $w_{\hat S}$ we get
\begin{align*}
\frac{w_T}{c_{\mu,\lambda} \,  c'_{\lambda,\mu}}&=\frac{w_S}{c_{\mu,\lambda} \, c'_{\lambda,\mu}}\, \frac{w_R}{c_{\lambda_-}\,  c'_{\lambda_-}}\\
&=\frac{c_{\mu,\lambda_-} \, c'_{\lambda_-,\mu}}{c_{\mu,\lambda} \, c'_{\lambda,\mu}} \, \frac{w_{\hat S}}{c_{\hat\lambda_-,\hat\mu}\, c'_{\hat\mu,\hat\lambda_-}} \, \frac{c_{\lambda_-,\lambda}\, c'_{\lambda,\lambda_-}}{c_{\lambda_-} \, c'_{\lambda_-}} \, \frac{w_{\hat R}}{c_{\hat\lambda,\hat\lambda_-} \, c'_{\hat\lambda_-,\hat\lambda}}\\
&=\frac{c_{\mu,\lambda_-}\, c'_{\lambda_-,\mu}}{c_{\mu,\lambda}\, c'_{\lambda,\mu}} \, \frac{c_{\hat\lambda_-} \, c'_{\hat\lambda_-}}{c_{\hat\lambda_-,\hat\mu} \, c'_{\hat\mu,\hat\lambda_-}}\, \frac{c_{\lambda_-,\lambda} \, c'_{\lambda,\lambda_-}}{c_{\lambda_-} \, c'_{\lambda_-}} \, \frac{w_{\hat T}}{c_{\hat\lambda,\hat\lambda_-} \, c'_{\hat\lambda_-,\hat\lambda}}\ .
\end{align*}
On the other hand by Lemma~\ref{rotation_induction} we have the equalities
\[
\frac{c_{\mu,\lambda_-}}{c_{\hat\lambda_-,\hat\mu}} \,
\frac{c_{\lambda_-, \lambda}}{c_{\hat\lambda,\hat\lambda_-}}\, \frac{c_{\hat\lambda, \hat \mu}}{c_{\mu, \lambda}} \,
\frac{c_{\hat\lambda_-}}{c_{\lambda_-}} =
\frac{c'_{\lambda_-,\mu}}{c'_{\hat\mu, \hat\lambda_-}} \, 
\frac{c'_{\lambda,\lambda_-}}{c'_{\hat\lambda_-, \hat\lambda}} \,
\frac{c'_{\hat \mu, \hat\lambda}}{c'_{\lambda, \mu}} \,
\frac{c'_{\hat\lambda_-}}{c'_{\lambda_-}} = 1 \, ,
\]
and the claim follows.
%\[
%\frac{c_{\mu,\lambda_-} \, c'_{\lambda_-,\mu}}{c_{\mu,\lambda}\, c'_{\lambda,\mu}} \, 
%\frac{c_{\hat\lambda_-}\, c'_{\hat\lambda_-}}{c_{\hat\lambda_-,\hat\mu} \, c'_{\hat\mu,\hat\lambda_-}}\,
%\frac{c_{\lambda_-,\lambda} \, c'_{\lambda,\lambda_-}}{c_{\lambda_-} \, c'_{\lambda_-}} \,
%\frac{c_{\hat\lambda,\hat\mu}\, c'_{\hat\mu,\hat\lambda}}{c_{\hat\lambda,\hat\lambda_-} \, c'_{\hat\lambda_-,\hat\lambda}}=1 \,,\]
%which follows from Lemma~\ref{rotation_induction2}.
\end{proof}

\subsection{The case $\ell(\mu) = 1$}

Recall the following expansion of Jack symmetric functions

\begin{proposition}{\cite[Proposition 4.2]{S}}\label{split}
Let $x=(x_1,x_2,\ldots)$ and $y=(y_1,y_2,\ldots)$ be two sets of indeterminates. If $\lambda/\mu$ is a skew partition, then
\[J_{\lambda/\mu}(x,y;\alpha)=\sum_\nu j_\nu(\alpha)^{-1}J_{\lambda/\nu}(x;\alpha)J_{\nu/\mu}(y;\alpha).\]
\end{proposition}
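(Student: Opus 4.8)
The plan is to reduce the identity to two facts about the $\alpha$-scalar product: that $\{j_\rho^{-1}J_\rho\}_\rho$ is the dual basis of $\{J_\rho\}_\rho$, so that every symmetric function expands as $f=\sum_\rho j_\rho^{-1}\langle f,J_\rho\rangle\,J_\rho$; and that the substitution $f\mapsto f(x,y)$ is the adjoint of the multiplication map $\Lambda(x)\otimes\Lambda(y)\to\Lambda$. Granting these, everything else is bookkeeping. In particular I will use the expansions $J_\sigma J_\tau=\sum_\rho j_\rho^{-1}\langle J_\sigma J_\tau,J_\rho\rangle\,J_\rho$ and, straight from the definition of skew Jack functions, $J_{\lambda/\mu}=\sum_\rho j_\rho^{-1}\langle J_\lambda,J_\mu J_\rho\rangle\,J_\rho$, together with the same expansion applied to $J_{\lambda/\nu}$ and $J_{\nu/\mu}$.

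First I would establish the two-alphabet expansion $J_\rho(x,y)=\sum_{\sigma,\tau}j_\sigma^{-1}j_\tau^{-1}\langle J_\rho,J_\sigma J_\tau\rangle\,J_\sigma(x)J_\tau(y)$. Since $\{J_\sigma(x)J_\tau(y)\}_{\sigma,\tau}$ is a basis of the ring of symmetric functions in the union of the two alphabets with dual basis $\{j_\sigma^{-1}j_\tau^{-1}J_\sigma(x)J_\tau(y)\}$ (because that scalar product is the tensor product of the two), this expansion amounts exactly to the identity $\langle J_\rho(x,y),J_\sigma(x)J_\tau(y)\rangle=\langle J_\rho,J_\sigma J_\tau\rangle$. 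I would prove the latter on power sums: $p_n(x,y)=p_n(x)+p_n(y)$ forces $p_\rho(x,y)=\sum p_{\rho'}(x)\,p_{\rho''}(y)$, summed over the ordered splittings of the multiset of parts of $\rho$; pairing this against $p_\sigma(x)p_\tau(y)$ and using that $\langle p_\rho,p_\rho\rangle=z_\rho\alpha^{\ell(\rho)}$ is multiplicative under splitting (the binomial count of splittings compensating the ratio of the $z$'s), one gets precisely $\langle p_\rho,p_\sigma p_\tau\rangle=\langle p_\rho,p_{\sigma\cup\tau}\rangle$, and the claim follows by bilinearity.

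Now the two sides of the proposition. Substituting the two-alphabet expansion of $J_\rho$ into $J_{\lambda/\mu}(x,y)=\sum_\rho j_\rho^{-1}\langle J_\lambda,J_\mu J_\rho\rangle\,J_\rho(x,y)$ and collecting the coefficient of $J_\sigma(x)J_\tau(y)$, the inner sum over $\rho$ is $\sum_\rho j_\rho^{-1}\langle J_\lambda,J_\mu J_\rho\rangle\langle J_\rho,J_\sigma J_\tau\rangle=\langle J_\lambda,\,J_\mu\big(\textstyle\sum_\rho j_\rho^{-1}\langle J_\sigma J_\tau,J_\rho\rangle J_\rho\big)\rangle=\langle J_\lambda,J_\mu J_\sigma J_\tau\rangle$, so $J_{\lambda/\mu}(x,y)=\sum_{\sigma,\tau}j_\sigma^{-1}j_\tau^{-1}\langle J_\lambda,J_\mu J_\sigma J_\tau\rangle\,J_\sigma(x)J_\tau(y)$. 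On the other hand, writing $J_{\lambda/\nu}(x)$ and $J_{\nu/\mu}(y)$ through their skew expansions and collecting the coefficient of $J_\sigma(x)J_\tau(y)$ in $\sum_\nu j_\nu^{-1}J_{\lambda/\nu}(x)J_{\nu/\mu}(y)$, the inner sum over $\nu$ is $\sum_\nu j_\nu^{-1}\langle J_\lambda,J_\nu J_\sigma\rangle\langle J_\nu,J_\mu J_\tau\rangle=\langle J_\lambda,\,J_\sigma\big(\textstyle\sum_\nu j_\nu^{-1}\langle J_\mu J_\tau,J_\nu\rangle J_\nu\big)\rangle=\langle J_\lambda,J_\mu J_\sigma J_\tau\rangle$, the same value. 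Hence the two sides agree term by term. All sums here are finite by degree, and the $x\leftrightarrow y$ symmetry of the resulting double sum is just the symmetry of $J_{\lambda/\mu}(x,y)$ under exchanging the two alphabets.

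The only step that is not pure formalism is the identity $\langle J_\rho(x,y),J_\sigma(x)J_\tau(y)\rangle=\langle J_\rho,J_\sigma J_\tau\rangle$, equivalently that substitution into a pair of alphabets is adjoint to multiplication; once this is in hand, the rest is forced by the duality of the bases $\{J_\rho\}$ and $\{j_\rho^{-1}J_\rho\}$. I expect that identity to be the main obstacle, and it is handled by the power-sum computation above, which is also the only place where the parameter $\alpha$ intervenes, through the multiplicativity of $z_\rho\alpha^{\ell(\rho)}$ under splitting the parts of $\rho$.
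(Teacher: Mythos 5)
Your argument is correct: the power-sum computation showing $\langle p_\rho(x,y),p_\sigma(x)p_\tau(y)\rangle=\langle p_\rho,p_\sigma p_\tau\rangle$ (i.e.\ that $f\mapsto f(x,y)$ is adjoint to multiplication) is valid, and the rest follows formally from the orthogonality $\langle J_\rho,J_\rho\rangle=j_\rho$ exactly as you say. The paper gives no proof of this statement, citing Stanley's Proposition 4.2, whose proof is essentially this same adjointness argument, so your route coincides with the standard one.
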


This has the following easy consequence for $J_{\lambda/\mu}(x;\alpha)$, under some strong assumption on $\mu$ which holds in particular when $\mu$ has only one row or is a prefix of $\lambda$ (the latter case is also contained in Theorem \ref{translation}).

\begin{proposition}	\label{prefix}
Let $\lambda/\mu$ be a skew partition and suppose that $\mu_i=\lambda_i$ for all $i<\ell(\mu)$. Then
\[v_{\lambda/\mu,\,\nu}(\alpha)=c'_{\mu}(\alpha)\, v_{\lambda,\,\mu\cup\nu}(\alpha).\]
In particular $\tilde v_{\lambda/\mu,\,\nu}(\alpha)$ is a polynomial with nonnegative integer coefficients.
\end{proposition}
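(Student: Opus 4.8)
The plan is to derive Proposition~\ref{prefix} from Proposition~\ref{split} by a specialization argument. First I would apply Proposition~\ref{split} with the roles of $x$ and $y$ chosen so that the first set of indeterminates accounts for the ``bottom'' part of the diagram. Concretely, under the hypothesis $\mu_i = \lambda_i$ for all $i < \ell = \ell(\mu)$, the skew diagram $\lambda/\mu$ lives entirely in rows $\geq \ell$, and in fact $\mu$ itself, viewed as a single row $\mu_\ell$ sitting at the end of the first $\ell-1$ rows that are shared with $\lambda$, is almost a horizontal strip situation. I would write $\mu = \tau \cup (\mu_\ell)$ where $\tau = (\lambda_1, \ldots, \lambda_{\ell-1})$, so that $\tau$ is a prefix of $\lambda$ and $\lambda/\tau$ has nonempty rows only in positions $\geq \ell$.

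The key step is: in Proposition~\ref{split}, take the skew partition to be $\lambda/\tau$ (not $\lambda/\mu$), and split it as $\lambda/\tau = \sum_\sigma j_\sigma^{-1} J_{\lambda/\sigma}(x)J_{\sigma/\tau}(y)$, then substitute $y = (y_1)$ a single variable. Since $J_{\sigma/\tau}(y_1)$ vanishes unless $\sigma/\tau$ is a horizontal strip, and here $\sigma$ ranges over partitions with $\tau \subset \sigma \subset \lambda$, the surviving $\sigma$ are precisely those of the form $\tau \cup (k)$ with $\lambda_\ell \geq k \geq \lambda_{\ell+1}$ — wait, more carefully, $\sigma = \tau$ with an extra row or extending row $\ell$; one needs $\sigma/\tau$ to be a single horizontal strip in one variable, which forces $\sigma$ to add boxes only in one row. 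The relevant value is $\sigma = \mu$ itself for extracting the $m_{(\mu_\ell)}$-type contribution in $y$. Then I would compare the coefficient of the appropriate monomial $y_1^{\mu_\ell}$ (equivalently $y_1^{|\mu|}$ up to the fixed prefix, handled by iterating or by a direct single-row computation) on both sides: on the left, $J_{\lambda/\tau}(x, y_1)$ expanded via the standard-tableau formula of Theorem~\ref{SThm6.3} isolates $J_{\lambda/\mu}(x)$ times a scalar; on the right, only $\sigma = \mu$ contributes, giving $j_\mu^{-1} J_{\lambda/\mu}(x) \cdot J_{\mu/\tau}(y_1)$, and $J_{\mu/\tau}(y_1) = w_T y_1^{\mu_\ell}$ for $T$ the one-row filling, with $w_T = j_\tau B_{\mu/\tau}/C_{\mu/\tau}$. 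Matching scalars and using $j_\mu = c_\mu c'_\mu$, $v_{\lambda,\mu} = c_\mu$ analogues and the horizontal-strip factor identities should collapse everything to the factor $c'_\mu(\alpha)$.

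Alternatively — and this is probably the cleaner route to write down — I would avoid the single-variable extraction and instead directly match coefficients of $m_\nu(x)$ and $m_{(\mu_\ell)}(y)$ (or rather $x^T y^R$ monomials) in Proposition~\ref{split} applied to $\lambda/\tau$, comparing the term $\nu$ on the left with the term $\sigma = \mu$ on the right, using that $J_{\lambda/\tau}(x,y) = \sum J_{\lambda/\sigma}(x) J_{\sigma/\tau}(y)/j_\sigma$ and that $v_{\lambda/\tau,\,\mu\cup\nu}$ relates to $v_{\lambda,\,\mu\cup\nu}$ because $\lambda/\tau$ coincides up to translation with a suitable diagram (or directly from Theorem~\ref{SThm6.3}). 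The upshot is $v_{\lambda/\mu,\,\nu} = c'_\mu\, v_{\lambda,\,\mu\cup\nu}$. The final sentence about $\tilde v$ then follows immediately: by Knop--Sahi \cite{KS}, $v_{\lambda,\,\mu\cup\nu}(\alpha)$ is a polynomial with nonnegative integer coefficients, and $c'_\mu(\alpha) = \prod_{s\in\mu}\big((a_{\mu,s}+1)\alpha + \ell_{\mu,s}\big)$ is too; one checks $u_\nu^{-1} v_{\lambda/\mu,\nu}$ is still integral because passing from $v$ to $\tilde v$ divides by $u_\nu$, and $u_{\mu\cup\nu}$ is divisible by $u_\nu$ (the parts of $\mu\cup\nu$ include those of $\nu$ with possibly higher multiplicities), so $\tilde v_{\lambda/\mu,\nu} = c'_\mu \cdot (u_{\mu\cup\nu}/u_\nu)^{-1}\cdots$ — here I would invoke the known nonnegativity of $\tilde v_{\lambda,\,\mu\cup\nu}$ directly rather than of $v_{\lambda,\,\mu\cup\nu}$, which is cleaner, and absorb the combinatorial factor $u_{\mu\cup\nu}/u_\nu$ into $c'_\mu$ or argue it is a positive integer.

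The main obstacle I anticipate is bookkeeping the normalization constants: verifying that the scalar relating $J_{\lambda/\mu}(x)$ as extracted from $J_{\lambda/\tau}(x,y_1)$ matches $j_\mu^{-1} w_T$ with $w_T = j_\tau B_{\mu/\tau}/C_{\mu/\tau}$, and that this product of $c$'s and $c'$'s telescopes exactly to $c'_\mu(\alpha)$. This is a finite computation with the arm/leg factors for the horizontal strip $\mu/\tau$ (a single row of length $\mu_\ell$ appended below $\tau$), but one must be careful that $B_{\mu/\tau,s}$ equals $c_{\mu,s}$ or $c'_{\mu,s}$ according to whether $s$'s column meets the new row, and similarly for $C$; the resulting ratio, times $j_\tau/j_\mu$, should leave precisely $c'_\mu$. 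I would also double-check the case analysis when $\ell(\mu)=1$ (so $\tau = \emptyset$, $j_\tau = 1$, and the formula reads $v_{\lambda/\mu,\nu} = c'_\mu v_{\lambda,\mu\cup\nu}$ directly) as a sanity check against known small cases.
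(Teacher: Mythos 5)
Your starting point --- Proposition~\ref{split} together with the extraction of a monomial in the $y$-variables --- is exactly the tool the paper uses, but the paper's execution is a one-step argument that avoids everything you flag as ``the main obstacle''. The paper applies Proposition~\ref{split} to $J_\lambda(x,y)=\sum_\sigma j_\sigma^{-1}J_{\lambda/\sigma}(x)J_\sigma(y)$ (i.e.\ to $\lambda/\emptyset$, not to $\lambda/\tau$) and takes the coefficient of the single monomial $y_1^{\mu_1}\cdots y_r^{\mu_r}$ with $r=\ell(\mu)$, all rows of $\mu$ at once. On the right-hand side that coefficient of $J_\sigma(y)$ is $v_{\sigma,\mu}$, nonzero only if $m_\mu$ occurs in $J_\sigma$; combined with $\sigma\subset\lambda$, $|\sigma|=|\mu|$ and the hypothesis $\mu_i=\lambda_i$ for $i<r$, this forces $\sigma=\mu$, so the only surviving term is $\frac{v_{\mu,\mu}}{j_\mu}J_{\lambda/\mu}(x)=\frac{1}{c'_\mu}J_{\lambda/\mu}(x)$ by Theorem~\ref{norma} and $v_{\mu,\mu}=c_\mu$. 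On the left-hand side the same coefficient is $\sum_\nu v_{\lambda,\mu\cup\nu}\,m_\nu(x)$, and the identity follows. No standard-tableau formula and no horizontal-strip bookkeeping are needed.

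As written, your route has genuine gaps. First, $J_{\sigma/\tau}(y_1)\neq 0$ does \emph{not} in general force $\sigma/\tau$ to lie in a single row (a horizontal strip may span several rows); it happens here only because $\tau$ is a full prefix of $\lambda$, so that interlacing gives $\sigma_{\ell+1}\le\tau_\ell=0$ --- this needs to be said. Second, your splitting of $\lambda/\tau$ only relates $v_{\lambda/\mu,\,\nu}$ to $v_{\lambda/\tau,\,\nu\cup(\mu_\ell)}$, not to $v_{\lambda,\,\mu\cup\nu}$; the proposed shortcut via translation fails, since $\lambda/\tau$ is a translate of the partition $(\lambda_\ell,\lambda_{\ell+1},\ldots)$ of size $|\lambda|-|\tau|$, whose coefficients have nothing to do with $v_{\lambda,\,\mu\cup\nu}$. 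The viable repair is the induction on $\ell(\mu)$ you mention only in passing, and then the whole proof rests on the unverified scalar identity $w_{T_0}=c_\mu\,c'_\tau$ for the one-row strip $\mu/\tau$, equivalently $B_{\mu/\tau}/C_{\mu/\tau}=c_\mu/c_\tau$ (it does hold: columns $j>\mu_\ell$ contribute trivially and columns $j\le\mu_\ell$ give exactly $c_\mu/c_\tau$, whence $j_\mu\,c'_\tau/w_{T_0}=c'_\mu$), which you only assert ``should collapse''. So the sketch is completable but not complete, and it misses the dominance-order observation that makes the paper's proof three lines. Your final remark on $\tilde v_{\lambda/\mu,\,\nu}$, using that $u_{\mu\cup\nu}/u_\nu$ is a positive integer, is correct and in fact more explicit than what the paper writes.
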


\begin{proof}
Write 
\[J_\lambda(x,y;\alpha)=\sum_\nu j_\nu(\alpha)^{-1}J_{\lambda/\nu}(x;\alpha) J_\nu(y;\alpha).\]
Denote $r = \ell(\mu)$. By the assumption on $\mu$, the unique partition $\nu \subset \lambda$ such that $\nu \leq \mu$ is $\mu$ itself. Taking the coefficient of $y_1^{\mu_1} \cdots y_{r}^{\mu_r}$ in the previous expansion we get then
\[[y_1^{\mu_1} \cdots y_{r}^{\mu_r}] J_\lambda(x,y;\alpha) = \frac{v_{\mu,\mu}(\alpha)}{j_\mu(\alpha)}J_{\lambda/\mu}(x;\alpha).\]
Thus the first claim follows by Theorem \ref{norma}, and from \cite{KS} we get the second one.
\end{proof}

\section{Remarks on the factorizability of Stanley $g$-polynomials}\label{sec:factorizability}

\subsection{Transposition of the skew diagram}

Let $\omega_{-\frac{1}{\alpha}} : \Lambda \otimes \mathbb Q(\alpha) \rightarrow \Lambda \otimes \mathbb Q(\alpha)$ be the $\mathbb Q(\alpha)$-algebra automorphism defined by $\omega_{-\frac{1}{\alpha}}(p_r) = -\frac{1}{\alpha} p_r$, for $r \geq 1$.

In \cite[Theorem 3.3]{S}, a \emph{duality} formula relating $J_{\lambda'}$ and $J_\lambda$ was given. More generally, as an easy consequence we have the following formula relating $J_{\lambda'/\mu'}$ and $J_{\lambda/\mu}$ (see also \cite[VI (10.19)]{M}).

\begin{proposition}
Given a skew partition $\lambda/\mu$, we have
\[J_{\lambda'/\mu'}(x;\alpha) = (-\alpha)^{|\lambda| + |\mu|} \ \omega_{-\frac{1}{\alpha}} \Big(J_{\lambda/\mu}(x;1/\alpha)\Big)\]
Moreover, for all partitions $\nu$ it holds
$$j_{\nu'} (\alpha) = \alpha^{2|\nu|} \ j_\nu (1/\alpha) \quad\text{and}\quad g^{\lambda'}_{\mu', \nu'} (\alpha) = \alpha^{2|\lambda|} \ g^\lambda_{\mu, \nu} (1/\alpha).$$
\end{proposition}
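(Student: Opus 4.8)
The plan is to derive everything from the duality formula for the non-skew Jack functions, namely the statement of \cite[Theorem 3.3]{S} which reads $J_{\lambda'}(x;\alpha) = (-\alpha)^{|\lambda|}\,\omega_{-1/\alpha}\big(J_\lambda(x;1/\alpha)\big)$, together with the behaviour of the scalar product under $\omega_{-1/\alpha}$. First I would recall that $\omega_{-1/\alpha}$ is, up to the specialization $\alpha \mapsto 1/\alpha$ on the coefficients, an isometry: more precisely, if $\langle\,,\,\rangle_\alpha$ denotes the scalar product with parameter $\alpha$, then $\langle \omega_{-1/\alpha}(f), \omega_{-1/\alpha}(g)\rangle_\alpha = \langle f, g\rangle_{1/\alpha}$ for symmetric functions $f,g$ with coefficients in $\mathbb Q$; indeed on the power-sum basis $\langle p_\lambda,p_\lambda\rangle_\alpha = z_\lambda \alpha^{\ell(\lambda)}$, while $\omega_{-1/\alpha}(p_\lambda) = (-1/\alpha)^{\ell(\lambda)}p_\lambda$, so $\langle \omega_{-1/\alpha}(p_\lambda),\omega_{-1/\alpha}(p_\lambda)\rangle_\alpha = \alpha^{-2\ell(\lambda)} z_\lambda\alpha^{\ell(\lambda)} = z_\lambda\alpha^{-\ell(\lambda)} = \langle p_\lambda,p_\lambda\rangle_{1/\alpha}$. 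This identity extends bilinearly to all symmetric functions with rational coefficients, which is all we need.

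Next I would establish the skew duality formula. Using the defining property $\langle J_{\lambda/\mu}, J_\nu\rangle_\alpha = \langle J_\lambda, J_\mu J_\nu\rangle_\alpha$ for all $\nu$, I would apply $\omega_{-1/\alpha}$ and the isometry above. Concretely, for any partition $\nu$,
\[
\big\langle (-\alpha)^{|\lambda|+|\mu|}\omega_{-1/\alpha}\big(J_{\lambda/\mu}(x;1/\alpha)\big),\ J_{\nu'}(x;\alpha)\big\rangle_\alpha
= (-\alpha)^{|\lambda|+|\mu|}\big\langle \omega_{-1/\alpha}(J_{\lambda/\mu}(x;1/\alpha)),\ (-\alpha)^{|\nu|}\omega_{-1/\alpha}(J_\nu(x;1/\alpha))\big\rangle_\alpha,
\]
which by the isometry equals $(-\alpha)^{|\lambda|+|\mu|+|\nu|}\langle J_{\lambda/\mu}(x;1/\alpha), J_\nu(x;1/\alpha)\rangle_{1/\alpha}$. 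Nonzero only when $|\nu| = |\lambda|-|\mu|$, in which case the sign is $(-\alpha)^{2(|\lambda|-|\mu|)+2|\mu|} = \alpha^{2|\lambda|}$ up to an even power, and the value becomes $\alpha^{2|\lambda|}\langle J_\lambda(x;1/\alpha), J_\mu(x;1/\alpha)J_\nu(x;1/\alpha)\rangle_{1/\alpha}$. On the other hand, applying the non-skew duality three times, $\langle J_{\lambda'/\mu'}(x;\alpha), J_{\nu'}(x;\alpha)\rangle_\alpha = \langle J_{\lambda'}(x;\alpha), J_{\mu'}(x;\alpha)J_{\nu'}(x;\alpha)\rangle_\alpha$, and I would rewrite each factor via duality and the isometry to reach exactly the same scalar; since $\{J_{\nu'}\}_\nu$ (for $|\nu|$ fixed) is a basis of the relevant homogeneous component, matching all pairings against this basis yields $J_{\lambda'/\mu'}(x;\alpha) = (-\alpha)^{|\lambda|+|\mu|}\omega_{-1/\alpha}\big(J_{\lambda/\mu}(x;1/\alpha)\big)$. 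The bookkeeping of signs and of the three applications of the non-skew formula is the one genuinely fiddly point, but it is routine.

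For the two norm/coefficient identities I would proceed as follows. For $j_{\nu'}(\alpha) = \langle J_{\nu'}(x;\alpha), J_{\nu'}(x;\alpha)\rangle_\alpha$, apply the non-skew duality to both factors and then the isometry: $\langle (-\alpha)^{|\nu|}\omega_{-1/\alpha}(J_\nu(x;1/\alpha)), (-\alpha)^{|\nu|}\omega_{-1/\alpha}(J_\nu(x;1/\alpha))\rangle_\alpha = \alpha^{2|\nu|}\langle J_\nu(x;1/\alpha), J_\nu(x;1/\alpha)\rangle_{1/\alpha} = \alpha^{2|\nu|} j_\nu(1/\alpha)$. For the $g$-polynomials, start from $g^{\lambda'}_{\mu',\nu'}(\alpha) = \langle J_{\lambda'}(x;\alpha), J_{\mu'}(x;\alpha)J_{\nu'}(x;\alpha)\rangle_\alpha$; substituting the duality expression for each of the three Jack functions, pulling the scalar $(-\alpha)^{|\lambda|}(-\alpha)^{|\mu|}(-\alpha)^{|\nu|}$ out front (and noting that $\omega_{-1/\alpha}$ is an algebra homomorphism so $\omega_{-1/\alpha}(J_\mu)\,\omega_{-1/\alpha}(J_\nu) = \omega_{-1/\alpha}(J_\mu J_\nu)$), and applying the isometry, the scalar product collapses to $(-\alpha)^{|\lambda|+|\mu|+|\nu|}\cdot(-\alpha)^{-|\lambda|}\cdot$ (a sign from the $\omega$ on the left factor) times $\langle J_\lambda(x;1/\alpha), J_\mu(x;1/\alpha)J_\nu(x;1/\alpha)\rangle_{1/\alpha}$; since $|\lambda| = |\mu| + |\nu|$ the total power of $(-\alpha)$ is $\alpha^{2|\lambda|}$, giving $g^{\lambda'}_{\mu',\nu'}(\alpha) = \alpha^{2|\lambda|} g^\lambda_{\mu,\nu}(1/\alpha)$. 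The main obstacle, such as it is, is purely organizational: carefully tracking the exponents of $-\alpha$ and the swap $\alpha \leftrightarrow 1/\alpha$ in the scalar product through each application of duality, so I would state the isometry lemma cleanly first and then let the computations be mechanical.
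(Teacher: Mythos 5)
Your proposal is correct and follows exactly the route the paper indicates: the paper gives no written proof, asserting only that the statement is ``an easy consequence'' of the non-skew duality \cite[Theorem 3.3]{S}, and your argument (the isometry property of $\omega_{-\frac{1}{\alpha}}$ on power sums, pairing against the basis $\{J_{\nu'}\}$, and the sign count $(-\alpha)^{|\lambda|+|\mu|+|\nu|}=\alpha^{2|\lambda|}$ when $|\nu|=|\lambda|-|\mu|$) is precisely the intended filling-in of that claim. All steps check out, including the compatibility with the paper's convention $\omega_{-\frac{1}{\alpha}}(p_r)=-\frac{1}{\alpha}p_r$.
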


\subsection{Factorizability of Stanley $g$-polynomials under special hypotheses}

In \cite{S} the following conjecture has been made. Let $c_{\mu, \nu}^\lambda$ denote the Littlewood-Richardson coefficient associated to the triple  $\lambda,\mu, \nu$.

\begin{conjecture}[{\cite[Conjecture 8.5]{S}}]	\label{conj:linear-factors}
Suppose that $c_{\mu, \nu}^\lambda = 1$. Then $g^\lambda_{\mu, \nu}$ is a product of linear factors. Moreover, we have
\[
	g^\lambda_{\mu, \nu} = \big(\prod_{s \in \lambda} c^*_{\lambda,s} \big) \big(\prod_{s \in \mu} c^*_{\mu,s}\big) \big(\prod_{s \in \nu} c^*_{\nu,s}\big)
\]
where for $\pi=\lambda,\mu,\nu$ and $s\in\pi$ it holds either $c^*_{\pi,s} = c_{\pi,s}$ or $c^*_{\pi,s} = c'_{\pi,s}$, and totally the two choices occur both $|\lambda|$ times.
\end{conjecture}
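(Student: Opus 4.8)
The plan is to assemble the general factorization from the special cases already proved in this paper, controlling throughout the combinatorics imposed by the hypothesis $c^\lambda_{\mu,\nu}=1$, and to organize the attack in three stages: first fix the exact shape of the answer by specialization and duality, then verify the conjecture on the families covered by Theorems~\ref{translation} and~\ref{rotation}, and finally attempt to reduce an arbitrary triple with $c^\lambda_{\mu,\nu}=1$ to those families. For the first stage, note that $c_{\pi,s}(1)=c'_{\pi,s}(1)=a_{\pi,s}+\ell_{\pi,s}+1$ is the hook length, so writing $H_\pi=\prod_{s\in\pi}(a_{\pi,s}+\ell_{\pi,s}+1)$ and using $J_\pi(x;1)=H_\pi\,s_\pi$ together with Theorem~\ref{norma}, one computes $g^\lambda_{\mu,\nu}(1)=\langle H_\lambda s_\lambda,\,H_\mu s_\mu\,H_\nu s_\nu\rangle=H_\lambda H_\mu H_\nu\,c^\lambda_{\mu,\nu}$. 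Under the hypothesis this is $H_\lambda H_\mu H_\nu$, which is exactly the $\alpha=1$ value of the conjectured product, \emph{independently} of the individual choices $c^*_{\pi,s}\in\{c_{\pi,s},c'_{\pi,s}\}$. To pin down those choices I would exploit the transposition formula $g^{\lambda'}_{\mu',\nu'}(\alpha)=\alpha^{2|\lambda|}g^\lambda_{\mu,\nu}(1/\alpha)$: since $c_{\pi,s}(\alpha)=\alpha\,c'_{\pi',s'}(1/\alpha)$ and $c'_{\pi,s}(\alpha)=\alpha\,c_{\pi',s'}(1/\alpha)$, the map $\alpha\mapsto 1/\alpha$ interchanges $c$ with $c'$ and $\pi$ with $\pi'$, so matching the degrees of the $2|\lambda|$ linear factors forces the count of $c$-type and $c'$-type factors to be $|\lambda|$ each. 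This stage is routine but isolates the genuine content, namely the rule deciding, box by box, between $c$ and $c'$.

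In the second stage I would use the already established factorizations as anchors. Corollaries~\ref{cor:translation} and~\ref{cor:rotation} exhibit $g^\lambda_{\mu,\tilde\lambda}$ and $g^\lambda_{\mu,\hat\mu}$ as products of linear factors, and both are instances of $c^\lambda_{\mu,\nu}=1$. They are however written using the \emph{mixed} quantities $c_{\mu,\lambda}$ and $c'_{\lambda,\mu}$, whose factors $a_{\mu,s}\alpha+\ell_{\lambda,s}+1$ and $(a_{\lambda,s}+1)\alpha+\ell_{\mu,s}$ blend the arms of one partition with the legs of another and so are not yet in the form of Conjecture~\ref{conj:linear-factors}. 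I would re-expand these mixed products into honest factors $c_{\pi,s}$ and $c'_{\pi,s}$ of $\lambda,\mu,\nu$ by a telescoping comparison along the rows and columns where the two partitions differ, in the spirit of the cancellations carried out in Lemma~\ref{rotation_induction}. Doing this should both confirm the conjecture on these families and, crucially, reveal the combinatorial assignment of $c$ versus $c'$ to each box, which I expect to depend on whether the box lies in a row or column meeting the relevant strip --- precisely the dichotomy governing the weights $B$ and $C$ in Theorem~\ref{SThm6.3}.

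For an arbitrary triple with $c^\lambda_{\mu,\nu}=1$ the natural idea is rectification: the unique Littlewood--Richardson tableau of shape $\lambda/\mu$ and content $\nu$ singles out a canonical chain of horizontal strips, and Stanley's weight $w_T$ of Theorem~\ref{SThm6.3} supplies the value of each step as an explicit ratio of $c$ and $c'$ factors, so one would hope that uniqueness of the tableau makes the relevant sum collapse to a single telescoping product. The hard part will be that $g^\lambda_{\mu,\nu}$ is the coefficient of $J_\nu$, not of $m_\nu$, and so is not itself one of the $w_T$: extracting it requires inverting the unitriangular transition between the $J_\nu$ and the monomial symmetric functions, and there is no formal reason for the resulting alternating sum to factor into linear terms. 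I expect this inversion --- showing that the hypothesis $c^\lambda_{\mu,\nu}=1$ forces the pertinent minor of the transition matrix to be a product of linear factors --- to be the crux, and to demand an idea genuinely beyond the horizontal-strip calculus of Theorems~\ref{translation} and~\ref{rotation}. As a first concrete reduction I would settle the subcase where $\nu$ is the leading weight $\nu_0$, for which Proposition~\ref{leading-coefficient} already yields a product, and then proceed by induction on $|\nu|$ using the coproduct expansion of Proposition~\ref{split} to peel off one part of $\nu$ at a time.
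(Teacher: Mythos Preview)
The statement you are attacking is recorded in the paper as an \emph{open conjecture} (Stanley's Conjecture~8.5), not as a theorem; the paper gives no proof of it in general and only verifies the special cases where $\nu$ is the top weight of $J_{\lambda/\mu}$ (Proposition~\ref{prop:factors-top-comp}) and its transpose (Proposition~\ref{prop:prv}). There is therefore no ``paper's own proof'' to compare against, and what you have written is a research programme rather than a proof.

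You in fact locate the gap yourself: to pass from Stanley's tableau expansion to the $J_\nu$-coefficient one must invert the (unitriangular) transition from the $J_\nu$ to the $m_\nu$, and nothing in the hypothesis $c^\lambda_{\mu,\nu}=1$ is known to force the resulting alternating sum to collapse to a product of linear factors. Your proposed induction via Proposition~\ref{split}, peeling off one part of $\nu$ at a time, does not close this gap either: the intermediate triples $(\lambda,\nu',\nu/\nu')$ arising along the way need not satisfy $c^{\,\cdot}_{\cdot,\cdot}=1$, so the inductive hypothesis is unavailable exactly where you would need it.

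Two smaller issues in the preparatory stages. First, the duality argument does not by itself force the $|\lambda|/|\lambda|$ split between $c$-type and $c'$-type factors: transposition swaps the two counts when passing from $(\lambda,\mu,\nu)$ to $(\lambda',\mu',\nu')$, which constrains the pair of triples but not either one separately (and note that $c_{\pi,s}$ has $\alpha$-degree $0$ whenever $a_{\pi,s}=0$, so a naive degree count is delicate). Second, the factorizations in Corollaries~\ref{cor:translation} and~\ref{cor:rotation} are written in the \emph{mixed} quantities $c_{\mu,\lambda,s}=a_{\mu,s}\alpha+\ell_{\lambda,s}+1$ and $c'_{\lambda,\mu,s}$; your hope of ``re-expanding'' each such factor as some $c_{\pi,t}$ or $c'_{\pi,t}$ with $\pi\in\{\lambda,\mu,\nu\}$ is not automatic and would itself need a box-matching bijection that the paper does not supply. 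In short, the anchors you plan to lean on confirm linear factorizations, but not yet ones of the precise shape Conjecture~\ref{conj:linear-factors} demands.
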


When $\nu$ is the highest partition occurring in $J_{\lambda/\mu}$, the conjecture is true thanks to Corollary \ref{leading-coefficient}. Indeed in that case we have
$$
	g^\lambda_{\mu, \nu} = \frac{v_{\lambda/\mu, \nu} \; j_\nu}{v_{\nu, \nu}} = v_{\lambda/\mu, \nu} \; c'_\nu.
$$
Thus Corollary \ref{leading-coefficient} yields the following description.

\begin{proposition}[{\cite[Proposition 8.6]{S}}]	\label{prop:factors-top-comp}
Suppose that $\nu$ is the shape of the maximal filling of $\lambda/\mu$. For $(i,j) \in \lambda$ denote $r_i = i - \mu'_{\lambda_i}$ and $c_j = \lambda'_j - \mu'_j$. Then 
\[
g^\lambda_{\mu, \nu} = \prod_{\tiny\begin{array}{c}(i,j)\in\lambda\\ r_i \leq c_j\end{array}} c_{\lambda,(i,j)}
\prod_{\tiny\begin{array}{c}(i,j)\in\lambda\\ r_i > c_j\end{array}} c'_{\lambda,(i,j)}
\prod_{\tiny\begin{array}{c}(i,j)\in\mu\\ r_{i+c_j} > c_j\end{array}} c_{\mu,(i,j)}
\prod_{\tiny\begin{array}{c}(i,j)\in\mu\\ r_{i+c_j}\leq c_j\end{array}} c'_{\mu,(i,j)} \quad  \prod_{s \in \nu} c'_{\nu,s}
\]
In particular, Conjecture \ref{conj:linear-factors} holds  true in this case.
\end{proposition}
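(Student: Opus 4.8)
The plan is to deduce Proposition~\ref{prop:factors-top-comp} directly from Proposition~\ref{leading-coefficient}, using only the elementary relation between the skew coefficient $v_{\lambda/\mu,\nu}$, the $g$-polynomial, and the squared norm. First I would record the identity
\[
g^\lambda_{\mu,\nu} = \langle J_\lambda, J_\mu J_\nu\rangle = \langle J_{\lambda/\mu}, J_\nu\rangle = v_{\lambda/\mu,\nu}\,\frac{\langle m_\nu, J_\nu\rangle}{\,?\,},
\]
but more cleanly I would just expand $J_{\lambda/\mu} = \sum_\rho v_{\lambda/\mu,\rho} m_\rho$ and use that $\langle J_{\lambda/\mu}, J_\nu\rangle$ picks out, when $\nu$ is maximal among the $\rho$ with $v_{\lambda/\mu,\rho}\neq 0$, the coefficient of $J_\nu$ in $J_{\lambda/\mu}$ times $j_\nu$; and the coefficient of $J_\nu$ in $J_{\lambda/\mu}$ is $v_{\lambda/\mu,\nu}/v_{\nu,\nu}$ since $v_{\nu,\nu}=c_\nu$ is the leading coefficient of $J_\nu$ and the expansion is triangular. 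This gives
\[
g^\lambda_{\mu,\nu} = \frac{v_{\lambda/\mu,\nu}\,j_\nu}{v_{\nu,\nu}} = \frac{v_{\lambda/\mu,\nu}\,c_\nu c'_\nu}{c_\nu} = v_{\lambda/\mu,\nu}\,c'_\nu,
\]
using Theorem~\ref{norma} and \cite[Theorem 5.6]{S}.

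Next I would observe that the hypothesis ``$\nu$ is the shape of the maximal filling of $\lambda/\mu$'' is exactly the hypothesis ``$\nu=\nu_0$'' in the paragraph preceding Proposition~\ref{leading-coefficient}, and that the numbers $r_i = i-\mu'_{\lambda_i}$ and $c_j = \lambda'_j - \mu'_j$ defined here coincide with the $r_i$ (rightmost label in row $i$) and $c_j$ (number of labels in column $j$) there: the column-consecutive-integers filling puts the labels $1,\dots,\lambda'_j-\mu'_j$ in column $j$, so $c_j$ is as claimed, and the largest label in row $i$ is the number of columns through row $i$ that lie in $\lambda/\mu$ and contain a box weakly above row $i$ — which, because the strips stack consecutively from the top of each column, works out to $i-\mu'_{\lambda_i}$. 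Once this identification is in place, substituting the formula of Proposition~\ref{leading-coefficient} for $v_{\lambda/\mu,\nu_0}$ into $g^\lambda_{\mu,\nu}=v_{\lambda/\mu,\nu}\,c'_\nu$ produces precisely the displayed product, with the extra factor $\prod_{s\in\nu}c'_{\nu,s}=c'_\nu$ appended.

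Finally, for the last sentence I would check that this factorization has the shape demanded by Conjecture~\ref{conj:linear-factors}. Each box of $\lambda$, of $\mu$, and of $\nu$ contributes exactly one factor, each of which is either a $c$ or a $c'$; so it remains to count how many $c$'s and how many $c'$'s appear in total. The $\nu$-boxes contribute $|\nu| = |\lambda|-|\mu|$ factors, all of the primed type. Among the $\lambda$-boxes and $\mu$-boxes, the partition into the four products is governed by the dichotomy $r_i\leq c_j$ versus $r_i>c_j$; a bijective argument — matching each $\lambda$-box $(i,j)$ with $r_i>c_j$ to a $\mu$-box, or directly counting — shows that the number of unprimed factors coming from $\lambda\cup\mu$ equals $|\lambda|$ and the number of primed ones equals $|\mu|$, so that overall there are $|\lambda|$ factors of each type. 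I expect this bookkeeping — verifying that the four index sets in Proposition~\ref{leading-coefficient} have the right cardinalities so that $c$ and $c'$ each occur $|\lambda|$ times — to be the only nontrivial point; the rest is formal. (Note the statement also records, implicitly via $c_{\mu,\nu}^\lambda=1$, that this maximal-$\nu$ situation does occur with Littlewood--Richardson coefficient one, which follows from the $\alpha=1$ specialization together with the triangularity.)
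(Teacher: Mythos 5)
Your argument is correct and follows the paper's own route exactly: the identity $g^\lambda_{\mu,\nu}=v_{\lambda/\mu,\nu}\,j_\nu/v_{\nu,\nu}=v_{\lambda/\mu,\nu}\,c'_\nu$, substitution of the formula of Proposition~\ref{leading-coefficient}, and a pairing argument for the $c$/$c'$ count. The only point the paper makes explicit that you leave as a gesture is the bijection $(i,j)\mapsto(i+c_j,j)$ from the boxes of $\mu$ into $\lambda$: each $\mu$-box is sent to a $\lambda$-box carrying the complementary priming, and the $\lambda$-boxes missed by this map (the top $c_j$ boxes of each column $j$) all satisfy $r_i\leq c_j$ and hence contribute unprimed factors, which is exactly the bookkeeping you correctly identify as the one nontrivial check.
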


In the notation of Corollary \ref{leading-coefficient}, the last statement follows by considering the map $(i,j) \mapsto (i+c_j, j)$, which defines a bijection between the boxes of $\mu$ and the boxes of $\lambda/\nu$.

By duality, we get a similar formula also for $g^{\lambda'}_{\mu',\nu'}(\alpha) = \alpha^{2|\lambda|} \; g^\lambda_{\mu,\nu}(\frac{1}{\alpha})$. Notice that $\nu$ is the weight of the highest filling of $\lambda/\mu$ if and only if $\lambda' = \mu' + \sigma(\nu')$ for some permutation $\sigma$ of the rows of $\nu'$.

\begin{proposition}	\label{prop:prv}
Suppose that  $\lambda = \mu + \sigma(\nu)$, where $\sigma$ is a permutation of the rows of $\nu$. For $(i,j) \in \lambda$ denote $r_i = \nu_{\sigma^{-1}(i)}$ and $c_j = j - \mu_{\lambda'_j}$. Then 
\[
g^\lambda_{\mu, \nu} = \prod_{\tiny\begin{array}{c} (i,j)\in\lambda\\ c_j \leq r_i \end{array}} c'_{\lambda,(i,j)}
\prod_{\tiny\begin{array}{c}(i,j)\in\lambda \\ c_j > r_i \end{array}} c_{\lambda,(i,j)}
\prod_{\tiny\begin{array}{c}(i,j)\in\mu \\ c_{j+r_i} > r_i \end{array}} c'_{\mu,(i,j)}
\prod_{\tiny\begin{array}{c}(i,j)\in\mu\\ c_{j+r_i}\leq r_i\end{array}} c_{\mu,(i,j)} \quad 
\prod_{s \in \nu} c_{\nu,s}
\]
In particular, Conjecture \ref{conj:linear-factors} holds true in this case.
\end{proposition}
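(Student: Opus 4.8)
The plan is to derive this formula from Proposition \ref{prop:factors-top-comp} by duality, exactly as the paragraph preceding the statement suggests. First I would record the general duality identities already available: from the proposition in Section \ref{sec:factorizability}, $g^{\lambda'}_{\mu',\nu'}(\alpha) = \alpha^{2|\lambda|}\, g^\lambda_{\mu,\nu}(1/\alpha)$ and $j_{\nu'}(\alpha) = \alpha^{2|\nu|}\, j_\nu(1/\alpha)$; and from Theorem~\ref{norma} together with the relation $c'_\lambda(\alpha) = \alpha^{|\lambda|}\, c_{\lambda'}(\alpha^{-1})$ (stated right after Theorem~\ref{norma}), one gets $c_{\lambda'}(\alpha) = \alpha^{|\lambda|} c'_\lambda(1/\alpha)$ and $c_{\lambda',s'}(\alpha) = \alpha\, c'_{\lambda,s}(1/\alpha)$, where $s \mapsto s'$ is the box of $\lambda'$ obtained by transposing $s \in \lambda$ (the arm and leg swap roles). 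These box-wise substitutions are the engine of the argument.

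Next I would set up the combinatorial translation of hypotheses. The hypothesis here is $\lambda = \mu + \sigma(\nu)$ for a permutation $\sigma$ of the rows of $\nu$; transposing, this is equivalent to $\lambda' = \mu' + \tau(\nu')$ where $\tau$ now permutes the \emph{columns} of the transposed shapes, which — reading Proposition~\ref{prop:factors-top-comp} — is precisely the statement that $\nu'$ is the weight of the highest filling of $\lambda'/\mu'$. So Proposition~\ref{prop:factors-top-comp} applies to the triple $(\lambda',\mu',\nu')$ and expresses $g^{\lambda'}_{\mu',\nu'}(\alpha)$ as a product over boxes of $\lambda'$, $\mu'$, $\nu'$ of factors $c$ or $c'$, with the $c$-versus-$c'$ choice governed by an inequality between the "row length" $r^{\mathrm{col}}_i = i - (\mu')'_{\lambda'_i}$ and the "column height" $c^{\mathrm{col}}_j = (\lambda')'_j - (\mu')'_j$ of $\lambda'/\mu'$. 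Under transposition, $(\mu')'_{\lambda'_i}$ becomes $\mu_{\lambda'_i}$-type data and $(\lambda')'_j$ becomes $\lambda_j$-type data; I would carefully check that the quantities called $r_i = \nu_{\sigma^{-1}(i)}$ and $c_j = j - \mu_{\lambda'_j}$ in the present statement are exactly the transposed avatars of the $c_j$ and $r_i$ of Proposition~\ref{prop:factors-top-comp} applied to $\lambda'/\mu'$ — note the deliberate swap of roles, which is why the inequality "$r_i \le c_j$" in the old formula (attached to $c_{\lambda,(i,j)}$) becomes "$c_j > r_i$" here (attached to $c_{\lambda,(i,j)}$), after the substitution $c \leftrightarrow c'$ forced by duality reverses which factor each region contributes.

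Then the computation is mechanical: start from $g^\lambda_{\mu,\nu}(\alpha) = \alpha^{2|\lambda|}\, g^{\lambda'}_{\mu',\nu'}(1/\alpha)$, substitute the product formula of Proposition~\ref{prop:factors-top-comp} for $g^{\lambda'}_{\mu',\nu'}$ evaluated at $1/\alpha$, and then replace every factor $c_{\lambda',s'}(1/\alpha) = \alpha^{-1} c'_{\lambda,s}(\alpha)$ and $c'_{\lambda',s'}(1/\alpha) = \alpha^{-1} c_{\lambda,s}(\alpha)$ (and likewise for $\mu$, $\nu$). The total number of factors is $|\lambda'| + |\mu'| + |\nu'| = |\lambda| + |\mu| + |\nu| = 2|\lambda|$ since $|\mu| + |\nu| = |\lambda|$, so the accumulated powers of $\alpha^{-1}$ contribute $\alpha^{-2|\lambda|}$, which cancels the prefactor $\alpha^{2|\lambda|}$ exactly; this bookkeeping is the one place to be slightly careful. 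Each $c_{\lambda',(i,j)'}$ in the region $r^{\mathrm{col}}_i \le c^{\mathrm{col}}_j$ turns into $c'_{\lambda,(j,i)}$, reindexed so that it lands in the region $c_j > r_i$ (after identifying indices via transposition and $\sigma \leftrightarrow$ the column permutation), matching the second product in the claimed formula; the $r^{\mathrm{col}}_i > c^{\mathrm{col}}_j$ region gives the first product; the two $\mu'$-products give the third and fourth; and $\prod_{s\in\nu'} c'_{\nu',s}(1/\alpha) = \alpha^{-|\nu|}\prod_{s\in\nu} c_{\nu,s}(\alpha)$ gives the final $\prod_{s\in\nu} c_{\nu,s}$. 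The last sentence of the claim, that Conjecture~\ref{conj:linear-factors} holds here, is then immediate: each factor is linear, and the two choices $c^*$ versus $c'^*$ each occur $|\lambda|$ times because that was true in Proposition~\ref{prop:factors-top-comp} and duality merely swaps the two labels globally. The main obstacle — really the only nontrivial point — is pinning down the index dictionary so that the transposed inequalities in Proposition~\ref{prop:factors-top-comp} land on the correctly-swapped factors and regions stated here; once that dictionary is written out explicitly, everything else is substitution and cancellation.
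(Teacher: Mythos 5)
Your proposal is correct and is exactly the paper's argument: the paper derives Proposition \ref{prop:prv} from Proposition \ref{prop:factors-top-comp} via the duality $g^{\lambda'}_{\mu',\nu'}(\alpha)=\alpha^{2|\lambda|}g^{\lambda}_{\mu,\nu}(1/\alpha)$ together with the observation that $\lambda=\mu+\sigma(\nu)$ transposes to the hypothesis of Proposition \ref{prop:factors-top-comp} for $(\lambda',\mu',\nu')$. You have merely written out in full the index dictionary and the $\alpha$-power bookkeeping that the paper leaves implicit, and both are carried out correctly.
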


In the notation of the previous proposition, notice that if $(i,j) \in \lambda$ and $c_j > r_i$ then $i < \lambda'_j$ is an inversion of $\sigma^{-1}$.

The previous formula for $g^\lambda_{\mu,\nu}$ can also be deduced by a more general result of Ruitenburg \cite[Appendix]{R}, which holds for Jacobi polynomials for arbitrary root systems.

\subsection{Skew diagram consisting of two connected components of nonskew type}

\begin{conjecture}	\label{conj:linear-factors-ratio}
Let $\mu$ be a rectangular partition and let $\lambda\supset\mu$ be a partition such that $\lambda/\mu$ has two connected components given by the partitions $\phi$ and $\psi$. Let $\nu$ be such that $c^\lambda_{\mu,\nu} = 1$. Then the ratio $g^{\lambda}_{\mu,\nu}/g^{\nu}_{\phi,\psi}$ decomposes into linear factors compatibly with Conjecture \ref{conj:linear-factors}:
\[
	\frac{g^\lambda_{\mu, \nu}}{g^{\nu}_{\phi,\psi}} = \prod_{s \in \mu} c^*_{\lambda,s} c^*_{\mu,s}
\]
where for $\pi=\lambda,\mu$ and $s \in \mu$ it holds either $c^*_{\pi,s} = c_{\pi,s}$ or $c^*_{\pi,s} = c'_{\pi,s}$, and totally the two choices occur both $|\mu|$ times.
\end{conjecture}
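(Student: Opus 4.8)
The plan is to reduce Conjecture~\ref{conj:linear-factors-ratio} to the already-established results on translation and rotation of skew diagrams. The key observation is that since $\mu$ is rectangular, say $\mu=(b^h)$, and $\lambda/\mu$ decomposes into two connected components of nonskew type, these components sit in disjoint sets of rows and columns relative to $\mu$. One component, say $\phi$, occupies rows $1,\ldots,h$ strictly to the right of column $b$ (so it is attached to the right edge of $\mu$), while the other, say $\psi$, occupies rows below row $h$ in columns $1,\ldots,b$ (attached to the bottom edge). Thus $\lambda = (\mu\text{-rectangle}) \cup (\phi\text{ shifted right}) \cup (\psi\text{ shifted down})$, and one can realize $\lambda/\mu$ inside a larger rectangle $\beta$ containing $\lambda$ so as to apply Theorem~\ref{rotation}.

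First I would set up the combinatorics precisely: describe $\phi$ as a partition with $\ell(\phi)\le h$ placed in columns $b+1,\ldots,b+\phi_1$, and $\psi$ as a partition with $\psi_1\le b$ placed in rows $h+1,\ldots,h+\ell(\psi)$. Then $J_{\lambda/\mu}$ can be related to $J_\nu$ whenever $c^\lambda_{\mu,\nu}=1$: indeed by the argument in the remark following Theorem~\ref{rotation}, the vanishing/nonvanishing pattern of $g^\lambda_{\mu,\nu}$ forces $J_{\lambda/\mu}$ and $J_\nu$ to be proportional precisely when $c^\lambda_{\mu,\nu}=1$, with ratio $g^\lambda_{\mu,\nu}/j_\nu$. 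Next, since the two components are of nonskew type and lie in disjoint rows, the skew function $J_{\lambda/\mu}$ with $\mu$ rectangular should, via a rotation/translation of one of the two blocks, be reducible to a skew function whose $\mu$-part has been ``absorbed'', turning the computation of $g^\lambda_{\mu,\nu}$ into a product of the form $c_{\mu,\lambda}\,c'_{\lambda,\mu}$ times $g^{\text{smaller}}$. Iterating (or applying the rectangular case, Corollary~\ref{cor:rotation}, to the block attached to the bottom edge, and the translation case, Corollary~\ref{cor:translation}, to the block attached to the right edge) should peel off exactly the factor $\prod_{s\in\mu} c^*_{\lambda,s}c^*_{\mu,s}$ and leave $g^\nu_{\phi,\psi}$.

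Concretely, I would argue as follows. By Proposition~\ref{split} applied with $\mu$ rectangular, expand $J_{\lambda/\mu}$ and isolate the single term surviving from $c^\lambda_{\mu,\nu}=1$; alternatively, work directly with Stanley's formula (Theorem~\ref{SThm6.3}) on standard tableaux of shape $\lambda/\mu$. The crucial point is that a standard tableau of shape $\lambda/\mu$ restricted to the two components $\phi$ and $\psi$ gives a pair of standard tableaux, and the weight $w_T$ should factor, up to the boxes of $\mu$, as a product over the two components. Comparing with the analogous expansion of $g^\nu_{\phi,\psi}$ (where $\nu\supset\emptyset$ is the partition whose diagram is $\phi\sqcup\psi$ reassembled with no intervening rectangle), the difference is accounted for exactly by the boxes $s\in\mu$, each contributing one of $c_{\lambda,s}$ or $c'_{\lambda,s}$ (depending on whether the component of $\lambda/\mu$ in the column, resp.\ row, of $s$ is nonempty) times one of $c_{\mu,s}$ or $c'_{\mu,s}$; the case analysis is exactly the one carried out in the proof of the translation Proposition (cases (2) and (3) there) and in Lemma~\ref{rotation_horizontal_strip}. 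Summing the contributions of all boxes of $\mu$ gives the claimed formula, and a counting check --- each box of $\mu$ contributes one $c$-type factor from $\lambda$ or $\mu$ and one $c'$-type factor, balancing the $|\mu|$ and $|\mu|$ --- confirms compatibility with Conjecture~\ref{conj:linear-factors}.

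The main obstacle I expect is the bookkeeping needed to justify that the weights genuinely \emph{factor} across the two connected components and that the ``extra'' factor is exactly the one indexed by $\mu$: one must track, for a box $s\in\mu$, whether its row meets $\psi$ (the bottom component) and whether its column meets $\phi$ (the right component), and verify that the four resulting cases produce precisely $c_{\lambda,s}$ or $c'_{\lambda,s}$ paired with $c_{\mu,s}$ or $c'_{\mu,s}$. Since $\phi$ and $\psi$ are each of nonskew type and occupy disjoint rows and columns, no box of $\mu$ can simultaneously see both a nonempty column in $\phi$ and a nonempty row in $\psi$ in a conflicting way, which is what makes the factorization clean; making this disjointness argument airtight --- including the edge cases where $\ell(\phi)=h$ or $\psi_1=b$, so a component touches the corner of $\mu$ --- is the delicate part. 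The polynomiality statement ($g^\lambda_{\mu,\nu}$ being a product of linear factors) is then immediate once the formula is established, since each $c_{\pi,s}$ and $c'_{\pi,s}$ is linear in $\alpha$.
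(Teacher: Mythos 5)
This statement is a \emph{conjecture} in the paper: the authors do not prove it in general, and only verify it for the two extreme choices $\nu=\phi+\psi$ and $\nu=\phi\cup\psi$, by combining the leading-coefficient formula (Proposition \ref{prop:factors-top-comp}) with its dual (Proposition \ref{prop:prv}). So there is no proof in the paper for your attempt to be measured against, and your proposal should be read as an attempt to settle an open problem. As such it has a fatal gap. The central step you rely on --- ``the vanishing/nonvanishing pattern of $g^\lambda_{\mu,\nu}$ forces $J_{\lambda/\mu}$ and $J_\nu$ to be proportional precisely when $c^\lambda_{\mu,\nu}=1$'' --- is false. The Cai--Jing result quoted in the remark after Theorem \ref{rotation} applies when the \emph{outer} partition $\lambda$ is rectangular, in which case there is a unique $\nu$ with $g^\lambda_{\mu,\nu}\neq 0$. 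Here it is the \emph{inner} partition $\mu$ that is rectangular, and $J_{\lambda/\mu}$ generically expands over many partitions $\nu'$ with $c^\lambda_{\mu,\nu'}\neq 0$; the hypothesis $c^\lambda_{\mu,\nu}=1$ constrains only one Littlewood--Richardson coefficient and gives no proportionality. Without that proportionality you cannot extract the single coefficient $g^\lambda_{\mu,\nu}$ from $J_{\lambda/\mu}$ by manipulating the tableau weights $w_T$, which is what the rest of your argument presupposes.

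Two further points. First, the factorization of the weights ``across the two connected components'' that you treat as bookkeeping is precisely what fails for Jack polynomials: as the paper notes in the closing remark of Section \ref{sec:factorizability}, $J_{\lambda/\mu}$ is in general \emph{not} proportional to $J_\phi\,J_\psi$, so the discrepancy between $g^\lambda_{\mu,\nu}$ and $g^\nu_{\phi,\psi}$ is the entire content of the conjecture, not a correction term that can be read off box by box from the cases of the translation proposition. Second, the reduction to Corollaries \ref{cor:translation} and \ref{cor:rotation} is not available: Corollary \ref{cor:rotation} requires $\lambda$ rectangular, and Corollary \ref{cor:translation} requires the skew diagram to be a translate of a straight shape, which a disconnected diagram never is. If you want a provable statement along these lines, the route the paper actually takes is to fix $\nu$ to be the maximal (or minimal) shape compatible with $\lambda/\mu$, where $g^\lambda_{\mu,\nu}$ is computable via the leading coefficient $v_{\lambda/\mu,\nu_0}$, and then compare the explicit product formulas for $g^\lambda_{\mu,\nu}$ and $g^\nu_{\phi,\psi}$ term by term.
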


\begin{proposition}
Let $\mu$ be a rectangular partition and let $\lambda \supset \mu$ be a partition such that $\lambda/\mu$ has two connected components given by the partitions $\phi$ and $\psi$. Then Conjecture \ref{conj:linear-factors-ratio} holds true
for the partitions $\nu = \phi + \psi$ and $\phi \cup \psi$.
\end{proposition}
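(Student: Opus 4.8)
The plan is to reduce both cases to results already established in the paper, namely the two specializations of Conjecture \ref{conj:linear-factors} recorded in Proposition \ref{prop:prv} (the $\mathrm{PRV}$-type case $\lambda = \mu + \sigma(\nu)$) and Proposition \ref{prop:factors-top-comp} (the maximal-filling case), together with the rectangularity of $\mu$, which forces $g^\nu_{\phi,\psi}$ to be a single factorized quantity by Corollary \ref{cor:rotation} (or rather by the analysis behind it). First I would set up coordinates: since $\lambda/\mu$ has exactly two connected components $\phi$ and $\psi$ of nonskew type and $\mu = (b^h)$ is rectangular, one component — say $\phi$ — sits strictly to the right of the last column of $\mu$ and below its top rows, while $\psi$ sits strictly below row $h$; the precise placement will let me read off arms and legs of boxes of $\lambda$ and of $\mu$ inside $\lambda$ in terms of those of $\phi$, $\psi$, $b$ and $h$.

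For the case $\nu = \phi + \psi$ (componentwise sum of the two partitions), I would observe that $\lambda$ is obtained from $\mu$ by adding, in each of the rows that $\phi$ and $\psi$ occupy, a number of boxes equal to the corresponding part of $\nu$; concretely $\lambda = \mu + \sigma(\nu)$ where $\sigma$ is the permutation of rows that sends the parts of $\nu$ to the rows actually used. Hence Proposition \ref{prop:prv} applies and gives $g^\lambda_{\mu,\nu}$ as an explicit product over $\lambda$, $\mu$, $\nu$ of $c$'s and $c'$'s. Separately, $g^\nu_{\phi,\psi}$ is of exactly the same PRV type (with $\nu$, $\emptyset$-shifted, in place of $\lambda$ and $\phi$ in place of $\mu$, since $\nu = \phi + \psi$ means $\nu$ is $\psi$ with $\phi$ added row-by-row, or vice versa), so Proposition \ref{prop:prv} again gives $g^\nu_{\phi,\psi}$ as a product of linear factors over $\nu$, $\phi$, $\psi$. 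The key computation is then to show that the $\nu$-, $\phi$-, $\psi$-indexed factors in $g^\lambda_{\mu,\nu}$ match those in $g^\nu_{\phi,\psi}$ up to the $\mu$-indexed correction, i.e. that the quotient $g^\lambda_{\mu,\nu}/g^\nu_{\phi,\psi}$ collapses to $\prod_{s\in\mu} c^*_{\lambda,s}c^*_{\mu,s}$ with the $c/c'$ pattern dictated by the inequality $c_j \le r_i$ versus $c_j > r_i$ of Proposition \ref{prop:prv}; this uses that adding a box to $\mu$ changes arms/legs of boxes of $\lambda$ in the same row or column, and that for a box $s\in\phi$ or $s\in\psi$ its arm and leg inside $\lambda$ differ from those inside $\nu$ only by the known contribution of the rectangle $\mu$, which is constant along rows and columns.

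For the case $\nu = \phi \cup \psi$ (the partition whose multiset of parts is the union), the dual picture applies: here $\nu$ is the weight of the \emph{maximal} column-filling of $\lambda/\mu$ exactly when $\lambda' = \mu' + \sigma'(\nu')$, which holds because transposing turns the row-addition picture of the previous paragraph into a column-addition picture for the transposed diagram — and $\mu'$ is again rectangular. So Proposition \ref{prop:factors-top-comp} gives $g^\lambda_{\mu,\nu}$ explicitly, and likewise $g^\nu_{\phi,\psi}$ is handled by the same proposition (with $\nu,\emptyset,\phi$ in the roles of $\lambda,\mu$ and one of the strips). One then forms the quotient and checks, box by box, that only the factors indexed by $s\in\mu$ survive and that they carry the $c/c'$ pattern prescribed by the $r_i \le c_j$ dichotomy of Proposition \ref{prop:factors-top-comp}; alternatively, one deduces this case from the previous one by applying the duality of Proposition \ref{prop} (with $\omega_{-1/\alpha}$) to $g^\lambda_{\mu,\nu}$ and $g^\nu_{\phi,\psi}$ simultaneously, noting that $(\phi\cup\psi)' = \phi'+\psi'$ and that $\mu$ rectangular $\iff$ $\mu'$ rectangular, so the $\nu=\phi+\psi$ result for the transposed data transports back.

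The main obstacle I expect is purely bookkeeping: correctly identifying, in the explicit products furnished by Propositions \ref{prop:prv} and \ref{prop:factors-top-comp}, \emph{which} boxes of $\lambda$ receive a $c$ and which a $c'$, since this depends on comparing $r_i$ with $c_j$, and then verifying that the cancellation against $g^\nu_{\phi,\psi}$ leaves precisely $|\mu|$ factors of $c$-type and $|\mu|$ of $c'$-type over the boxes of $\mu$, as Conjecture \ref{conj:linear-factors-ratio} demands. The geometric input that makes this tractable is that $\phi$ and $\psi$ occupy disjoint rows and disjoint columns outside $\mu$ (because $\mu$ is a rectangle and the two components are of nonskew type), so there is no interaction between the two components in the arm/leg computations, and the dependence on $\mu$ enters only through the uniform shifts $+b$ in legs of boxes above row $h$ and $+h$ in arms of boxes left of column $b$. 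I would carry out the row/column case and then invoke duality for the other, to avoid writing essentially the same calculation twice.
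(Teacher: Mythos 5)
Your overall architecture is the same as the paper's (play Proposition \ref{prop:factors-top-comp} against Proposition \ref{prop:prv} and watch the cancellation, handling the second value of $\nu$ by transposition), but you have attached the two propositions to the wrong cases, and this is not a mere relabelling: the step ``$\lambda=\mu+\sigma(\nu)$, hence Proposition \ref{prop:prv} applies'' is false for $\nu=\phi+\psi$. Since $\phi$ and $\psi$ occupy disjoint sets of rows, the composition $\lambda-\mu$ has parts $\phi_1,\dots,\phi_{\ell(\phi)},0,\dots,0,\psi_1,\dots,\psi_{\ell(\psi)}$, whose decreasing rearrangement is $\phi\cup\psi$, not $\phi+\psi$. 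Already for $\mu=(1)$, $\lambda=(2,1)$, $\phi=\psi=(1)$ one has $\lambda-\mu=(1,1)=\phi\cup\psi$, while $\phi+\psi=(2)$. Dually, since the two components also occupy disjoint sets of columns, the weight of the maximal filling of $\lambda/\mu$ is $\phi+\psi$ (its $k$-th part counts the columns of $\lambda/\mu$ with at least $k$ boxes, namely $\phi_k+\psi_k$), not $\phi\cup\psi$. So the correct pairing is the opposite of yours: $\nu=\phi+\psi$ is the maximal-filling case, to be treated with Proposition \ref{prop:factors-top-comp} for $g^\lambda_{\mu,\nu}$, while $\nu=\phi\cup\psi$ is the case $\lambda=\mu+\sigma(\nu)$ of Proposition \ref{prop:prv} (or is obtained from the first case by duality, using $(\phi+\psi)'=\phi'\cup\psi'$). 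As written, each of your two arguments fails at its first step because the hypothesis of the proposition you invoke is not satisfied.

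Once the cases are swapped back, the rest of your plan is essentially the paper's proof. For $\nu=\phi+\psi$ the denominator $g^{\nu}_{\phi,\psi}$ is computed by Proposition \ref{prop:prv} applied to the triple $(\nu;\phi,\psi)$ with $\sigma$ trivial, giving $\big(\prod_{s\in\nu}c'_{\nu,s}\big)\big(\prod_{s\in\phi}c_{\phi,s}\big)\big(\prod_{s\in\psi}c_{\psi,s}\big)$. In the numerator, the maximal filling satisfies $r_i\le c_j$ at every box of $\lambda/\mu$, and since the two components share no rows and no columns the arm and leg in $\lambda$ of a box of $\phi$ or $\psi$ coincide with those computed inside $\phi$ or $\psi$; hence the $\lambda/\mu$-indexed factors of Proposition \ref{prop:factors-top-comp} reproduce exactly $\big(\prod_{s\in\phi}c_{\phi,s}\big)\big(\prod_{s\in\psi}c_{\psi,s}\big)$, the $\nu$-indexed factor reproduces $\prod_{s\in\nu}c'_{\nu,s}$, and what survives in the ratio is the product of the $2|\mu|$ factors indexed by the boxes of $\mu$ (each contributing once as a box of $\mu$ and once as a box of $\lambda$), which is precisely the shape demanded by Conjecture \ref{conj:linear-factors-ratio}.
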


\begin{proof}
Assume that $\nu = \phi + \psi$, the other case is treated similarly (or using the duality). Assume that $\ell(\phi) \leq \ell(\psi)$, and apply Proposition \ref{prop:prv} to the triple $\{\nu,\phi, \psi\}$, with $\sigma$ the trivial permutation. 
%
%Assume that $\ell(\phi) \leq \ell(\psi)$, and apply Proposition \ref{prop:factors-top-comp} to the triple $\{\nu',\phi', \psi'\}$. Then in the notation therein, for all $j \leq \nu'_1$ it holds $c_j = \psi_j$. If $(i,j) \in \nu'$, then $r_i \leq \psi_{\nu'_i}$, and being $j \leq \nu'_i$ it follows $r_i \leq c_{\nu'_i} \leq c_j$ for all $(i,j) \in \nu$.
%
%Therefore we get
%\[
%	g^{\nu'}_{\phi',\psi'} = 
%	\big(\prod_{s \in \nu'} c_{\nu',s} \big) \; 
%	\big(\prod_{s \in \phi'} c'_{\phi',s} \big)\; 
%	\big(\prod_{s \in \psi'} c'_{\psi',s} \big).
%\]
%and, applying the duality $g^{\lambda}_{\mu,\nu}(\alpha) = \alpha^{2|\lambda|} \; g^{\lambda'}_{\mu',\nu'}(\frac{1}{\alpha})$,
%
Then we get
\[
	g^{\nu}_{\phi,\psi} = \big(\prod_{s \in \nu} c'_{\nu,s} \big) \; 
	\big(\prod_{s \in \phi} c_{\phi,s} \big) \;
	 \big(\prod_{s \in \psi} c_{\psi,s} \big).
\]
On the other hand
\[
	\big(\prod_{s \in \phi} c_{\nu_1,s} \big) \; 	
	\big(\prod_{s \in \psi} c_{\nu_2,s} \big) = \prod_{s \in \lambda/\mu} c_{\lambda,s},
\]
and with respect to the filling on $\lambda/\mu$ defined by $\nu$ we have $r_i \leq c_j$ for all $(i,j) \in \lambda/\mu$. Thus by Proposition \ref{prop:factors-top-comp} we see that
$g^\lambda_{\mu, \nu}$ is divisible by $g^{\nu}_{\phi,\psi}$, and their ratio is a product of $2|\mu|$ factors of the desired shape.
\end{proof}

\begin{remark}
Let us here comment on Conjecture \ref{conj:linear-factors-ratio} in parallel with Theorem \ref{translation} and Theorem \ref{rotation}. 

First recall the invariance of skew Schur symmetric functions by translation and by rotation of the skew diagram
\[s_{\tilde\lambda/\tilde\mu}=s_{\lambda/\mu}\quad\text{and}\quad s_{\hat\lambda/\hat\mu}=s_{\lambda/\mu}.\]
Theorem \ref{translation} and Theorem \ref{rotation} generalize these identities by stating the semi-invariance of skew Jack symmetric functions by translation and by rotation.

Recall now the equality of a skew Schur symmetric function $s_{\lambda/\mu}$ with the product of the skew Schur symmetric functions associated with the connected components of the skew diagram of $\lambda/\mu$. We can also restrict to the special case, as in Conjecture \ref{conj:linear-factors-ratio}, of a rectangular partition $\mu$ such that $\lambda/\mu$ has two connected components given by the partitions $\phi$ and $\psi$, we have
\[s_{\lambda/\mu}=s_{\phi}\,s_{\psi},\]
while in general the skew Jack symmetric function $J_{\lambda/\mu}$ is not proportional to the product of the Jack symmetric functions $J_\phi\,J_\psi$. Conjecture \ref{conj:linear-factors-ratio} is a partial attempt to understand the relation between $J_{\lambda/\mu}$ and $J_\phi\,J_\psi$.
\end{remark}

\section{Looking for a generalization of Knop and Sahi's combinatorial formula}\label{sec:lowest}

Let us recall the integral combinatorial formula due to Knop and Sahi \cite{KS} for the Jack symmetric functions $J_\lambda(x;\alpha)$.

A (not necessarily standard) tableau $T$ of shape $\lambda$ is called {\it admissible} if for all boxes $(i,j)\in\lambda$:
\begin{itemize}
\item $T(i,j)\neq T(i',j)$ for all $i'>i$,
\item $T(i,j)\neq T(i',j-1)$ for all $i'<i$ and $j>1$.
\end{itemize} 

A box $(i,j)\in\lambda$ is called {\it critical} for $T$ if $j>1$ and $T(i,j)=T(i,j-1)$.

For $s\in\lambda$, set 
\[d_{\lambda,\,s}=d_{\lambda,\,s}(\alpha)=(a_{\lambda,\,s}+1)\alpha+\ell_{\lambda,\,s}+1\]
and, for $T$ tableau of shape $\lambda$, set
\[d_T=d_T(\alpha)=\prod_{s\text{ critical}}d_{\lambda,\,s}(\alpha)\ .\]

\begin{theorem}{\cite[Theorem 5.1]{KS}}\label{KSformula}
\[J_\lambda(x;\alpha)=\sum_{T \; \mathrm{admissible}}d_T(\alpha)x^T\ .\]
\end{theorem}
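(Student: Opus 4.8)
The plan is to \emph{derive} Theorem~\ref{KSformula} from Stanley's combinatorial formula, Theorem~\ref{SThm6.3}: specialising the latter to $\mu=\emptyset$ already yields a monomial expansion $J_\lambda(x;\alpha)=\sum_{T\text{ standard of shape }\lambda}w_T(\alpha)\,x^T$, so it suffices to prove that for every content vector $\gamma$,
\[
\sum_{\substack{T\text{ admissible of shape }\lambda\\ x^T=x^\gamma}} d_T(\alpha)\;=\;\sum_{\substack{T\text{ standard of shape }\lambda\\ x^T=x^\gamma}} w_T(\alpha),
\]
the right-hand side being already known to compute the coefficient of $x^\gamma$ in $J_\lambda$. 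Both sides are rational functions of $\alpha$ whose denominators divide $c_\lambda c'_\lambda$ by Theorem~\ref{norma}, so it is enough to prove this as an identity of rational functions. The two extreme contents serve as a sanity check: for $\gamma=(1^n)$ with $|\lambda|=n$ every bijective filling of $\lambda$ is admissible with no critical box, so the left-hand side is $n!$, matching the normalisation axiom; and for $\gamma=\lambda$ the left-hand side is $\sum_T d_T$ over admissible $T$ of content $\lambda$, which must reproduce $v_{\lambda,\lambda}=c_\lambda$ by \cite{S}.

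The substance is an induction on the number $n$ of labels allowed, equivalently on the number of variables. Restricting from $n$ to $n-1$ variables deletes, on either side, the boxes carrying the maximal label $n$; those form a horizontal strip $\lambda/\nu$, and on the Stanley side the resulting coefficient of $x_n^{|\lambda/\nu|}$ is exactly $B_{\lambda/\nu}/C_{\lambda/\nu}$, depending only on the shapes $\lambda$ and $\nu$. Moreover the map $T\mapsto S:=T|_\nu$ forgetting the boxes labelled $n$ is a bijection from the admissible tableaux of shape $\lambda$ with entries $\le n$ and $T^{-1}(n)=\lambda/\nu$ onto the admissible tableaux $S$ of shape $\nu$ with entries $\le n-1$; the admissibility conditions transport without loss because a horizontal strip meets each column in at most one box, which sits at the bottom of that column. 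Hence, assuming Theorem~\ref{KSformula} already known for $\nu$, so that $J_\nu=\sum_S d_S(\alpha)\,x^S$ over such $S$, the induction step is equivalent to the Pieri-type identity
\[
\sum_{\substack{T\text{ admissible of shape }\lambda\\ \text{entries}\le n,\ T^{-1}(n)=\lambda/\nu}} d_T(\alpha)\,x^{T|_\nu}\;=\;\frac{B_{\lambda/\nu}}{C_{\lambda/\nu}}\,J_\nu(x_1,\dots,x_{n-1};\alpha),
\]
where $x^T=x_n^{|\lambda/\nu|}\,x^{T|_\nu}$; the base case $n=1$ reduces to the one-row identity $\prod_{k=1}^{m-1}(k\alpha+1)=c_{(m)}$.

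I expect this Pieri-type identity to be the real obstacle, because of a tension between global and local data. Writing $D_{\lambda/\nu}$ for the product of $d_{\lambda,s}$ over the critical boxes of the labelled strip --- a factor determined by the shapes alone, each such box being a non-leftmost box of a horizontal run --- one has $d_T=D_{\lambda/\nu}\prod_s d_{\lambda,s}$, the product over the boxes $s\in\nu$ that are critical in $T|_\nu$; but $d_{\lambda,s}$ uses the arm and leg of $s$ in the \emph{full} shape $\lambda$, whereas the weight $d_S$ making up $J_\nu$ uses them in $\nu$, so the two differ by a product of ratios $d_{\lambda,s}/d_{\nu,s}$ taken over a set of boxes that depends on the whole tableau $S$, not just on $\nu$, whereas $B_{\lambda/\nu}/C_{\lambda/\nu}$ depends only on the shapes. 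Consequently the identity cannot hold term by term and must be proved as a genuine summation identity over all admissible tableaux of shape $\nu$ of a given content. The natural attack is a nested induction on $|\nu|$, removing one box at a time from a run of the strip so that exactly one new critical box --- hence one new factor --- arises at each step, and checking that the accumulated ratios $d_{\lambda,s}/d_{\nu,s}$ telescope against the explicit $c$- and $c'$-factors comprising $B_{\lambda/\nu}$ and $C_{\lambda/\nu}$; one must simultaneously control the cross-column admissibility condition $T(i,j)\ne T(i',j-1)$ for $i'<i$, which is precisely what makes the count of admissible fillings of prescribed content delicate. A self-contained alternative, bypassing Theorem~\ref{SThm6.3}, is Knop and Sahi's own route: prove first the analogous combinatorial formula for the non-symmetric Jack polynomials $E_\eta$ by induction along the intertwiner recursion generated by the Cherednik operators together with the raising operator and the $s_i$, and then symmetrise; in that approach the hard point is instead the inductive verification of the non-symmetric formula under the recursion.
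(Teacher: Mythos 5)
First, a point of comparison: the paper does not prove this statement at all --- Theorem~\ref{KSformula} is quoted verbatim from Knop and Sahi \cite[Theorem 5.1]{KS} --- so there is no internal proof to measure your attempt against; it has to stand on its own as a proof of a substantial theorem.

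As written it does not. The decisive step --- the Pieri-type identity equating $\sum_T d_T(\alpha)\,x^{T|_\nu}$ over admissible fillings with prescribed top layer to $(B_{\lambda/\nu}/C_{\lambda/\nu})\,J_\nu$ --- is not proved; you explicitly flag it as ``the real obstacle'' and offer only a strategy (telescoping the ratios $d_{\lambda,s}/d_{\nu,s}$) without carrying it out. Worse, the inductive framework this identity is meant to live in is broken at the outset. For a \emph{standard} tableau the boxes carrying the maximal label form a horizontal strip $\lambda/\nu$ with $\nu$ a partition, but Knop--Sahi admissible tableaux are not standard, and the set $T^{-1}(n)$ need not be of the form $\lambda/\nu$ at all. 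Take $\lambda=(2,2)$ and the filling $T(1,1)=T(1,2)=2$, $T(2,1)=T(2,2)=1$: both columns have distinct entries and $T(2,2)=1\neq T(1,1)=2$, so $T$ is admissible, with critical boxes $(1,2)$ and $(2,2)$, contributing $d_{\lambda,(1,2)}\,d_{\lambda,(2,2)}\,x_1^2x_2^2$; yet its boxes labelled $2$ form the \emph{top} row, so deleting them leaves the bottom row, which is not the diagram of any partition $\nu$. Hence the map $T\mapsto T|_\nu$ is not defined on all admissible $T$, the fibres of ``forget the label $n$'' are not indexed by horizontal strips, and the induction on the number of variables cannot be set up in the form you describe. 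Any genuine derivation of Knop--Sahi from Stanley's formula must confront the fact that the two sums run over combinatorially different objects (column-strict versus merely admissible fillings) with no shape-preserving bijection between the fibres. The alternative route you mention in closing --- the non-symmetric polynomials $E_\eta$ and the intertwiner recursion --- is essentially how Knop and Sahi actually proceed, but you do not execute that either.
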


We are not able to formulate a conjecture for a generalization of Theorem \ref{KSformula} to skew Jack symmetric functions. Here we just formulate a rather intricate combinatorial conjecture only for the lowest coefficient.

\subsection{The lowest coefficient}

Let us look at the lowest coefficient of $J_{\lambda/\mu}(x)$ with respect to the monomial symmetric functions, that is, the function $v_{\lambda/\mu,\,(1^n)}(\alpha)$ where $(1^n)$ denotes the one column partition $(1,\ldots,1)$ of length $n=|\lambda|-|\mu|$. From the definition we have
\[\frac{v_{\lambda/\mu,\,(1^n)}(\alpha)}{n!}=\sum_\nu\frac{\langle J_\lambda(x),J_\mu(x) J_\nu(x)\rangle}{j_\nu}.\] 

A (finite) subset $C$ of $\mathbb Z_{>0}\times\mathbb Z_{>0}$ will here be called a {\it configuration}. Generalizing Young diagrams, we can think of a configuration as a set of boxes at integer positions in the positive quadrant. 

For any configuration $C$ we define two partitions: $\rho(C)$, the numbers $r_i(C)$ ($i=1,2,\ldots$) of boxes of $C$ in the $i$-th row rearranged in decreasing order, and $\gamma(C)$, the numbers $c_j(C)$ ($j=1,2,\ldots$) of boxes of $C$ in the $j$-th column rearranged in decreasing order. A configuration (of $|\mu|$ boxes) will be called $\mu$-{\it admissible} if it can be obtained from the Young diagram of $\mu$ by a (possibly empty) sequence of moves of the following kinds:
\begin{enumerate}
\item moving a box of the configuration $C$ along the same row obtaining a new configuration $C'$ with $\gamma(C')<\gamma(C)$,
\item moving a box of the configuration $C$ along the same column obtaining a new configuration $C'$ with $\rho(C')<\rho(C)$,
\item permuting the rows of the configuration,
\item permuting the columns of the configuration.
\end{enumerate}
   
By explicit computations on small partitions we have observed the following

\begin{conjecture}For all partitions $\mu$ and all $\mu$-admissible configurations $C$ there exist (uniquely determined) polynomials $\pi_{(\mu,C)}(\alpha)$ with nonnegative integer coefficients, invariant by row permutations and column permutations of $C$, such that for all partitions $\lambda$
\[\frac{v_{\lambda/\mu,\,(1^n)}(\alpha)}{n!}=\sum_{C\subset\lambda} \pi_{(\mu,C)}(\alpha)\]
with $C$ varing among $\mu$-admissible configurations included in the Young diagram of $\lambda$. The polynomials do not depend on $\lambda$ but only on $\mu$ and $C$.
\end{conjecture}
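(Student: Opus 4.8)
Although we cannot prove the conjecture, let us sketch the approach we would follow and the point where it stalls.

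First we would rewrite the left hand side by means of Theorem~\ref{SThm6.3}. A standard tableau of shape $\lambda/\mu$ and weight $(1^n)$, with $n=|\lambda|-|\mu|$, is precisely a saturated chain $\mu=\lambda^{(0)}\subset\lambda^{(1)}\subset\cdots\subset\lambda^{(n)}=\lambda$ adding one box at a time, and extracting the coefficient of $x_1\cdots x_n$ from Theorem~\ref{SThm6.3} yields
\[
\frac{v_{\lambda/\mu,\,(1^n)}(\alpha)}{n!}=\frac{j_\mu}{n!}\sum_{T}\ \prod_{i=1}^{n}\frac{B_{\lambda^{(i)}/\lambda^{(i-1)}}}{C_{\lambda^{(i)}/\lambda^{(i-1)}}}\ ,
\]
the sum running over all such chains. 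This quantity is also the leading coefficient, in the variable $N$, of the principal specialization $J_{\lambda/\mu}(1^N;\alpha)$, and it equals $\sum_{|\kappa|=n}g^\lambda_{\mu,\kappa}/j_\kappa$ as recalled above; either of these forms can be analyzed via Theorem~\ref{SThm6.3} and the underlying horizontal-strip Pieri rule. (Trying instead to understand each $g^\lambda_{\mu,\kappa}/j_\kappa$ separately seems harder, being close to Stanley's conjecture itself.)

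The crux is that the configurations indexing the right hand side of the conjecture are \emph{not} in bijection with the chains $T$: already for $\mu=(1)$ and $\lambda=(2)$ there is a single chain, of weight $2\alpha$, which must be split as $\pi_{((1),\{(1,1)\})}+\pi_{((1),\{(1,2)\})}=\alpha+\alpha$, so one cannot merely fiber the chain sum. The plan is instead to produce a finer combinatorial model: a notion of \emph{admissible filling of a configuration} $C$, generalizing the admissible tableaux of Knop and Sahi (Theorem~\ref{KSformula}), carrying a weight that is a product of factors of type $d_{\kappa,s}(\alpha)$ (or $c_{\kappa,s},c'_{\kappa,s}$), in such a way that each filling determines a $\mu$-admissible configuration $C\subseteq\lambda$, the total weighted count of all fillings reproduces $v_{\lambda/\mu,\,(1^n)}/n!$, and the partial weighted count over the fillings attached to a fixed $C$ is a polynomial with nonnegative integer coefficients, invariant under row and column permutations of $C$. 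Concretely this amounts to constructing the skew analogue of Theorem~\ref{KSformula} in the special case of the lowest coefficient.

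Granting such a model, the remaining points are formal: the $\pi_{(\mu,C)}$ are forced, and shown to depend only on $(\mu,C)$, by an inversion argument over the equivalence classes of configurations partially ordered by the minimal size of an ambient partition --- one evaluates the conjectured identity on those minimal partitions, peels off recursively the contributions of the lower classes, and checks the resulting candidate against the chain formula above for every $\lambda$ --- while the nonnegativity of the coefficients is inherited from that of the weights in the model. The real obstacle is the construction of the model itself. It should be stressed that the two devices used elsewhere in this paper to import nonnegativity are unavailable here: for general $\mu$ the skew function $J_{\lambda/\mu}$ is not proportional to any non-skew Jack function, so Corollary~\ref{cor:translation}, Corollary~\ref{cor:rotation} and \cite{KS} do not apply directly, and Proposition~\ref{prefix} only handles very special $\mu$. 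For this reason even the a priori assertion that the $\pi_{(\mu,C)}$ are polynomials in $\alpha$ rather than rational functions seems to require the combinatorial model, and we have verified it, along with the conjecture, only by computer on small partitions.
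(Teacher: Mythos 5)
The statement you were asked to prove is the final conjecture of the paper: the authors give \emph{no} proof of it, stating only that it has been ``observed'' by explicit computation on small partitions, and they say explicitly at the start of Section~\ref{sec:lowest} that they are unable even to formulate a full skew analogue of the Knop--Sahi formula. Your proposal correctly recognizes this and does not claim a proof, so there is no proof in the paper to compare yours against. What you do assert along the way checks out: extracting the coefficient of $x_1\cdots x_n$ from Theorem~\ref{SThm6.3} does reduce $v_{\lambda/\mu,\,(1^n)}/n!$ to a sum of $w_T$ over saturated chains from $\mu$ to $\lambda$; this quantity is the leading coefficient in $N$ of $J_{\lambda/\mu}(1^N;\alpha)$ and equals $\sum_\nu g^\lambda_{\mu,\nu}/j_\nu$ as recalled in the paper; and your test case $\mu=(1)$, $\lambda=(2)$ is computed correctly ($w_T=2\alpha$, split as $\alpha+\alpha$ over the two admissible configurations $\{(1,1)\}$ and $\{(1,2)\}$), which correctly shows that the configurations cannot simply index the chains. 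Your proposed route --- a weighted ``admissible filling of a configuration'' model generalizing Theorem~\ref{KSformula}, from which existence, nonnegativity and integrality of the $\pi_{(\mu,C)}$ would follow, with uniqueness obtained by a separate triangularity argument over classes of configurations --- is exactly the kind of combinatorial model the authors say they are looking for but do not have. Two small caveats: the uniqueness assertion in the conjecture is not automatic and your inversion argument would need the relevant incidence system to actually be triangular/invertible, which you have not verified; and, as you note yourself, even polynomiality of the $\pi_{(\mu,C)}$ is not established. The conjecture remains open, and your contribution is a correct reformulation plus a plausible but unexecuted strategy, on par with what the paper itself offers.
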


\end{document}